

\documentclass[11pt]{article}
\textwidth 16cm
\textheight 22.0cm
\voffset -0.5cm
\hoffset -0.5cm


\usepackage{kbordermatrix}

\usepackage{caption}
\usepackage{amsfonts,amsthm}
\usepackage{amsmath,amssymb,enumerate,color}
\usepackage{algorithm,algorithmic}
\usepackage{epsfig,graphicx,psfrag,mathrsfs,subfigure}
\usepackage{eucal}
\usepackage{yhmath}

\usepackage{cleveref}

\RequirePackage{multirow}

%

\usepackage{geometry}
\usepackage{lipsum}

\def\bc{\pmb{c}}

\def\be{\pmb{e}}
\def\Bf{\pmb{f}}

\def\bu{\pmb{u}}
\def\bv{\pmb{v}}

\def\bx{\pmb{x}}
\def\by{\pmb{y}}
\def\bz{\pmb{z}}

\def\hm{\hphantom{-}}

\DeclareMathOperator{\arccosh}{arccosh}
\DeclareMathOperator{\diag}{diag}
\DeclareMathOperator{\rank}{rank}

\def\wtd{\widetilde}
\def\what{\widehat}
\def\hm{\hphantom{-}}

\def\bbC{\mathbb{C}}

\def\bbI{\mathbb{I}}
\def\bbJ{\mathbb{J}}

\def\bbR{\mathbb{R}}

\def\sss{\scriptscriptstyle}

\DeclareMathOperator{\F}{F}
\DeclareMathOperator{\HH}{H}
\DeclareMathOperator{\T}{T}

\newtheorem{theorem}{Theorem}[section]
\newtheorem{lemma}{Lemma}[section]
\newtheorem{corollary}{Corollary}[section]

\theoremstyle{definition}

\newtheorem{remark}{Remark}[section]
\newtheorem{example}{Example}[section]

\numberwithin{algorithm}{section}
\numberwithin{equation}{section}
\numberwithin{figure}{section}
\numberwithin{table}{section}

\graphicspath{{figures/}{figure/}{pictures/}%
	{picture/}{pic/}{pics/}{image/}{images/}}

\title{A Minimal Perturbation Approach For The Rectangular Multiparameter Eigenvalue Problem}

\author{Shanheng Han\thanks{%
   School of Mathematical Sciences, Soochow University, Suzhou 215006, Jiangsu, China.
   Email: {\tt 3468805603@qq.com}.}
\and Lei-Hong Zhang\thanks{%
   Corresponding author.
   School of Mathematical Sciences, Soochow University, Suzhou 215006, Jiangsu, China.
   Supported in part by the National Natural Science Foundation of China (NSFC-12471356, NSFC-12371380),
   Jiangsu Shuangchuang Project (JSSCTD202209),  Academic Degree and Postgraduate Education Reform Project of Jiangsu Province,
   and China Association of Higher Education under grant 23SX0403.
   Email: {\tt longzlh@suda.edu.cn}.}
\and Ren-Cang Li\thanks{%
   Department of Mathematics, University of Texas at Arlington,             Arlington, 76019-0408, TX, USA.
   Supported in part by NSF DMS-2407692.
   Email:  {\tt rcli@uta.edu}.}
}

 \date{\today}

\begin{document}

\maketitle

\begin{abstract}
The rectangular multiparameter eigenvalue problem (RMEP)
involves rectangular coefficient matrices (usually with more rows than columns) and may potentially have no solution in its original form.
A minimal perturbation framework is proposed to defines approximate solutions. Computationally,
two particular scenarios are considered: computing one approximate eigen-tuple or a complete set of approximate eigen-tuples.
For computing one approximate eigen-tuple, an alternating iterative scheme with proven convergence is devised, while for a complete set of approximate eigen-tuples, 
the framework leads to a standard MEP (RMEP with square coefficient matrices) for numerical solutions.
The proposed approach is validated on RMEPs from discretizing the multiparameter Sturm-Liouville  equation and 
the Helmholtz equations  by the least-squares spectral method.
\end{abstract}


\medskip
{\bf Keywords.} Rectangular multiparameter eigenvalue problems; RMEP; rectangular generalized eigenvalue problems;
RGEP; multiparameter Sturm-Liouville equation
\medskip

{\bf AMS subject classifications.} 65F15, 65F20, 15A18, 47A75, 65N35

\clearpage
\tableofcontents

\clearpage
\section{Introduction}\label{sec:introduction}
Given matrices $A_i, B_{i1}, B_{i2}, \ldots, B_{ik} \in \bbC^{m_i \times n_i}$ for $1\le i\le k$, the general  {\em rectangular multiparameter eigenvalue problem\/} (RMEP)
takes the form
\begin{equation}\label{eq:RMEP}
\left\lbrace \begin{aligned}
		&A_1 \bx_1 = \lambda_1 B_{11} \bx_1 + \lambda_2 B_{12} \bx_1 + \cdots + \lambda_k B_{1k} \bx_1, \\
		&\quad\quad\quad\vdots\quad\\
		&A_k \bx_k = \lambda_1 B_{k1} \bx_k + \lambda_2 B_{k2} \bx_k + \cdots + \lambda_k B_{kk} \bx_k,
	\end{aligned}\right.
\end{equation}
where $\lambda_i\in\bbC$ and $0\ne \bx_i\in \bbC^{n_i}$ for $1\le i\le k$ are unknown scalars and
vectors to be determined. When \eqref{eq:RMEP} holds, $(\lambda_1,\ldots,\lambda_k)$ is termed an {\em eigenvalue tuple\/}
and $(\bx_1,\ldots,\bx_k)$ the corresponding {\em eigenvector tuple}.
In its generality, $m_i$ may or may not be equal to $n_i$ for every $i$ and, for our interest in this paper, $m_i\ge n_i$.
In the event when all $m_i=n_i$, we will refer \eqref{eq:RMEP} as
the {\em multiparameter eigenvalue problem\/}, or MEP for short.
Frequently, we may use the term ``MEP \eqref{eq:RMEP}'' with an implicit understanding that it is about RMEP~\eqref{eq:RMEP} with all $m_i=n_i$.
MEP \eqref{eq:RMEP} can be transformed into $k$ related GEPs \cite{atki:1968,atki:1972,plgh:2015}  (see Appendix \ref{sec:MEPvGEP}),
and consequently, existing theory and numerical methods on GEP can be tapped.

When $k=1$, RMEP~\eqref{eq:RMEP} is referred as the {\em rectangular generalized eigenvalue problem\/} (RGEP).
If also $m_1=n_1$,
it is the usual {\em generalized eigenvalue problem\/} (GEP)
$A\bx=\lambda B\bx$ where $A,\,B\in\bbC^{n\times n}$.
There is a wealth of material in the literature on the subject, including
the Kronecker canonical form for RGEP \cite{gant:1959I,gant:1959II},
theory and numerical methods for GEP
\cite{bddrv:2000,bhli:1996,demm:1997,govl:2013,li:1994a,li:2003,li:2014HLA,stsu:1990} (and references therein).
We will provide a brief review on the state-of-the-art developments for RGEP in section \ref{sec:RGEP:MPF}
to motivate what we will do to RMEP \eqref{eq:RMEP} in this paper.

In passing, we point out that there is another type of multiparameter eigenvalue problem that was also named RMEP in
\cite{hokp:2024}:
\begin{equation}\label{eq:RMEP:hokp2024}
A \bx= \lambda_1 B_1 \bx + \lambda_2 B_2 \bx  + \cdots + \lambda_k B_k \bx,
\end{equation}
where $A,\,B_i \in \bbC^{(n+k-1)\times n}$ for $1\le i\le k$. It has two major differences from RMEP~\eqref{eq:RMEP}
of interest in this paper: 1) there is only one defining equation in \eqref{eq:RMEP:hokp2024},
and 2) the number of rows in the matrices $A,\, B_i$ is precisely $n+k-1$ rather than arbitrary.
Applications  of this type of  RMEP \eqref{eq:RMEP:hokp2024}
include the optimal least squares autoregressive moving average model as well as the optimal least squares realization of autonomous linear time-invariant dynamical system (see  \cite{demo:2020,blum:1978,blum:1978b,hokp:2024,homp:2012,khaz:1998,shsh:2009,vemo:2022,vemo:2019,vemo:2023}
for more detail).

The focus of this paper is to solve RMEP \eqref{eq:RMEP}. In general, there is a possibility that RMEP \eqref{eq:RMEP}
may not have any eigenvalue tuple $(\lambda_1,\ldots,\lambda_k)$ along with
the corresponding eigenvector tuple $(\bx_1,\ldots,\bx_k)$. Hence its solution existence must be reinterpreted differently
from the usual sense of satisfying the equations in \eqref{eq:RMEP} exactly.

\subsection{Sources of RMEP}
Sources for RMEP \eqref{eq:RMEP} include
the multiparameter ODE eigenvalue problem \cite{dris:2010,hant:2022,hana:2022} discretized by
the least-squares spectral method with the help of
\texttt{Chebfun}\footnote{\texttt{Chebfun} is available at {\tt http://www.chebfun.org}}
\cite{drht:2014}, the Helmholtz equation \cite{amls:2014,eina:2022,ghhp:2012,volk:1988}
solved by the method of separation of variables, and a number of other types of differential equations
that can also lead to RMEP \eqref{eq:RMEP} (see, e.g., \cite{fohm:1972,ghhp:2012,jaho:2009,plgh:2015,slee:2008,volk:1988}).
Later in subsection~\ref{ssec:mSLeq}, we will provide detailed account on
the multiparameter ODE eigenvalue problem and the Helmholtz equation.

\subsection{Contributions}
Our main contributions are summarized as follows:
\begin {enumerate}
\item We establish a minimal perturbation framework for determining $\ell$ approximate eigen-tuples for  RMEP \eqref{eq:RMEP},
    where $1\le\ell\le N:=n_1\cdots n_k$, and investigate theoretical properties as well as design efficient numerical methods
      for $\ell=1$ and $N=n_1\cdots n_k$, extending the works in \cite{boeg:2005,itmu:2016}.
     The case $\ell=1$  is for finding one approximate eigen-tuple while
    the case $\ell=N$ seeks a complete set of approximate eigen-tuples.

\item Our treatment for $\ell=1$, even in the case of RGEP (i.e., $k=1$), significantly simplifies
      the development in \cite{boeg:2005} and our results read much more elegant, too.
      Also our treatment requires essentially no additional care when it comes to the case $k=1$ (i.e., for RGEP)
      or general $k>1$.

\item We have to modify the minimal perturbation framework, initially as a straightforward extension to what's in
      \cite{boeg:2005} for RGEP, for $\ell=N$ so that we can lay the foundation
      to design numerical method based on truncated SVDs.

\item We show how to efficiently solve the multiparameter Sturm-Liouville equation and the Helmholtz equation in two steps:
      first discretize them by the least-squares spectral method with the help of \texttt{Chebfun} and then solve the resulting RMEP by our proposed algorithm
      for a complete set of approximate eigen-tuples.
\end {enumerate}
Numerical results are also reported to demonstrate the effectiveness of the approach.

\subsection{Organization and notation}
In \cref{sec:RGEP:MPF}, we review the state-of-the-art minimal perturbation framework for RGEP.
In \cref{sec:RMEP:MPF}, we present our minimal perturbation framework for RMEP, leaving the detailed
investigations to \cref{sec:RMEP:ell=1} for one approximate eigen-tuple
and \cref{sec:RMEP:ell=N} for a complete set of $N=n_1\cdots n_k$ approximate
eigen-tuples.
In \cref{sec:egs}, we present numerical experiments to evaluate our algorithms for RMEP, first on random problems and
  on RMEPs arising from discretizing
the multiparameter Sturm-Liouville equation and the Helmholtz equation by the least-squares spectral method
with the help of \texttt{Chebfun}. Numerical results demonstrate the efficiency of the new methods.
Conclusions are drawn in  \cref{sec:conclusion}.
In Appendix~\ref{sec:MEPvGEP}, we explain the transformation of MEP into GEPs
for numerical solutions as it is helpful to the understanding of \cref{sec:RMEP:ell=N,sec:egs}.

Throughout this paper,
 ${\mathbb C}^{m\times n}$ (${\mathbb R}^{m\times n}$) is the set
of all $m\times n$ complex (real) matrices.
$I_n\equiv [\be_1,\be_2,\dots,\be_n]\in\bbR^{n\times n}$ is the identity matrix.  Vectors are denoted by the bold lower case letters while capital letters are for matrices.
For a matrix $A\in \bbC^{m\times n}$,  $\|A\|_2=\sigma_1(A)$ and $\|A\|_{\F}$ are the  2-norm and Frobenius norm of a matrix $A$, respectively, where $\sigma_j(A)$ denotes the $j$th largest singular value of $A$;  $\rank(A)$ is   the rank of $A$, and $A^{\HH}$ and $A^{\T}$ stand for the conjugate transpose and transpose of  $A$, respectively. The column space of $A$  is $\text{span}(A)$. We will use MATLAB-like convention to represent the sub-matrix $A_{(\bbI_1,\bbI_2)}$ of $A$, consisting of intersections of rows and columns indexed by
$\bbI_1 \subseteq\{1,2,...,m\}$ and $\bbI_2 \subseteq\{1,2,...,n\}$, respectively.

\section{Minimal perturbation framework for RGEP, a review}\label{sec:RGEP:MPF}
In this section, we briefly review the state-of-the-art treatments for RGEP, i.e.,
\eqref{eq:RMEP} with $k=1$.
The basic idea from these treatments motivates us ways to deal with the general case $k>1$ in
the following sections.

First we  simplify notation by dropping off the subscripts to the coefficient matrices in
\eqref{eq:RMEP} since $k=1$. Specifically, we state RGEP  as
\begin{equation}\label{eq:RGEP-simplified}
A\bx=\lambda B\bx,
\end{equation}
where $A,\,B\in\bbC^{m\times n}$ with $m\ge n$.
RGEP in general may not admit a solution, which is clear from the Kronecker canonical form for matrix pair
$(A, B)$ \cite{gant:1959I,gant:1959II}.
%
%
Nonetheless, real-world applications demand solutions regardless. In this aspect,
some  numerical treatments have been designed to look for approximate eigenpairs
rather than satisfy \eqref{eq:RGEP-simplified} exactly
\cite{bole:1990,boeg:2005,deka:1987,edek:1997,elmg:2004,legc:2008,maoc:1991,sclv:2006,stew:1994,wrtr:2002}.
In particular, the authors of \cite{boeg:2005,chgo:2006} proposed the following minimal perturbation framework:
\begin{subequations}\label{eq:RGEP:eta}
\begin{align}
\eta_{\ell}:=&\inf_{\what{A}, \what{B},  \{(\lambda_j,\bx_j)\}_{j=1}^{\ell}} \quad
               \big\| \big[ \what{A}-A, \what{B}-B \big] \big\|_{\F}^2 \\
\mbox{s.t.} &\quad\left\{
        \begin{array}{l}
        \mbox{$\what{A}\bx_j=\lambda_j \what{B}\bx_j$ for $j=1,2,\ldots,\ell$, and $\lambda_j\ne \lambda_i$ for $j\ne i$},\\
		\mbox{$\bx_1,\bx_2,\ldots,\bx_\ell$ are linearly independent,}
        \end{array}\right.
\end{align}
\end{subequations}
for the determination of $\ell$ approximate eigenpairs
$\{(\lambda_j,\bx_j)\}_{j=1}^{\ell}$, where $\ell<n$ is a preselected integer.
It is noted that these  eigenpairs depend on $\ell$, and any eigenpair for $\ell$ may no longer be one for $\ell+1$, unfortunately.

Boutry, Elad, Golub, and Milanfar \cite{boeg:2005} analyzed  the infimum $\eta_1$ and proposed an alternating and convergent numerical algorithm.
For $\ell=n$, Ito and Murota \cite{itmu:2016} showed that
$\eta_n$ can be reformulated equivalently as
\begin{equation}\label{eq:RGEP:eta:ell=n}
\eta_n=\inf_{Z\in\bbC^{n\times n}}\quad \big\| \big[ \what{A}-A, \what{B}-B \big] \big\|_{\F}^2\quad
		\mbox{s.t.}\quad \what A=\what BZ,
\end{equation}
and moreover, the $n$ approximate eigenpairs can be computed from  GEP
\begin{equation}\label{eq:tSVDGEP}
	V_{11}^{\HH} \bx = \lambda V_{21}^{\HH} \bx,
\end{equation}
where $V_{11},V_{21}\in \bbC^{n\times n}$ are  from  the SVD of $[A,B]\in\bbC^{m\times 2n}$ as follows:
$\wtd m = \min\{m,2n\}$,
 \begin{equation}\label{eq:RGEP_TLS(n)_SVD}
[A,B]=
          U          \times
        \kbordermatrix{ &\sss n &\sss n\\
		\sss n         & \Sigma_1 & 0 \\
		\sss \wtd m-n & 0 & \Sigma_2 }\times
	V^{\HH}, \quad
V=\kbordermatrix{ &\sss n &\sss n\\
		\sss n         & V_{11}&V_{12}\\
		\sss n & V_{21}&V_{22} },
\end{equation}
 $U\in\bbC^{m\times \wtd m}$,
and $\Sigma_1=\diag(\sigma_1,\sigma_2,\dots,\sigma_{n})$ consisting of the $n$ largest singular values
of $[A,B]$. It is noted that the matrix pair $(V_{11}^{\HH}, V_{21}^{\HH})$ is regular, i.e.,
$\det(V_{11}^{\HH} - \lambda V_{21}^{\HH})\not\equiv 0$ for $\lambda\in\bbC$, which is important because
the eigen-system of a regular matrix pair is well-defined and continuous \cite{li:2014HLA,stsu:1990}.
If $V_{21}^{\HH}$ is singular,
GEP \eqref{eq:tSVDGEP} will have $\infty$ as an eigenvalue, in which case it is more suitable to
express $\lambda$ formally as a pair $(\alpha,\gamma)$ in the sense $\lambda=\alpha/\gamma$, and,
without loss of generality, we may assume $|\gamma|^2+|\alpha|^2=1$ and $\gamma\ge 0$.


It is safe to say that the studies on \eqref{eq:RGEP:eta} for the cases $\ell=1$ and $n$
are relatively complete and satisfactory: there are effective ways to compute $\eta_{\ell}$ and
the associated eigenpairs. However, the case for $1<\ell<n$
is rather complicated, although there are some results to estimate $\eta_{\ell}$ through often expensive
optimization techniques \cite{chgo:2006,lild:2021,lilv:2020}.
One option is to first seek  $n$ approximate eigenpairs by GEP \eqref{eq:tSVDGEP}, and then pick $\ell$ out of the
$n$ approximate eigenpairs with the smallest normalized residuals.

\section{Minimal perturbation formulation for RMEP}\label{sec:RMEP:MPF}
Similarly to RGEP \eqref{eq:RGEP-simplified} which is a special case of RMEP \eqref{eq:RMEP} with $k=1$,
RMEP \eqref{eq:RMEP} for $k>1$ may not possess an eigen-tuple $(\lambda_1,\ldots,\lambda_k,\bx_1,\ldots,\bx_k)$, either.
For that reason,
RMEP \eqref{eq:RMEP} has to be modified.

A straightforward extension of the  minimal perturbation formulation \eqref{eq:RGEP:eta} for RGEP
to RMEP \eqref{eq:RMEP} 
is as follows:
finding a minimal perturbations to $A_i, B_{is}\in \bbC^{m_i\times n_i}$
such that
\begin{subequations}\label{eq:RMEP-approx:ell}
\begin{align}		
&\inf_{  \{(\what A_i, \what B_{i1},\ldots, \what B_{ik})\}_{i=1}^k,
           \{(\lambda_{1,j},\ldots,\lambda_{k,j},\bx_{1,j},\ldots,\bx_{k,j})\}_{j=1}^{\ell}}
         \sum_{i=1}^k\big\| \big[\what A_i-A_i, \what B_{i1}-B_{i1},\ldots,\what B_{ik}-B_{ik}\big] \big\|_{\F}^2  \\
		&\text{s.t.}\quad \left\{
        \begin{array}{l}
		  \widehat{A}_i \bx_{i,j} = \sum_{s=1}^k\lambda_{s,j} \widehat{B}_{is} \bx_{i,j}\quad
              \mbox{for $1\le i\le k,\, 1\le j\le \ell$}, \\
		  \mbox{and $\bx_{1,j}\otimes\bx_{2,j}\otimes\cdots\otimes \bx_{k,j}$ for $j=1,\ldots\ell$ are linearly independent},
        \end{array}\right.
\end{align}
\end{subequations}
where $\ell$ is the number of the desired approximate eigen-tuples
$$\{(\lambda_{1,j},\ldots,\lambda_{k,j},\bx_{1,j},\ldots,\bx_{k,j})\}_{j=1}^\ell$$ and is application-dependent.
It is noted that
these eigen-tuples determined by \eqref{eq:RMEP-approx:ell} depend on $\ell$,
and any eigen-tuple for $\ell$ may no longer be one for $\ell+1$, unfortunately.

This straightforward extension indeed works equally well in the case
of $\ell=1$ (i.e., for one approximate eigen-tuple) as it has for RGEP ($k=1$).
\Cref{sec:RMEP:ell=1} contains our detailed investigation for $\ell=1$ and any $k\ge 1$.
It worth mentioning that our treatment requires essentially no additional care when it comes to the case $k=1$ (i.e., for RGEP) or any general $k>1$. When specialized to RGEP (i.e., $k=1$), our argument significantly simplifies
the development in \cite{boeg:2005} and our results read much more elegant, too (see \Cref{rk:alg-RMEP2RGEP:ell=1}).

However, such a straightforward extension does not work so well as is in the case
of $k>1$ and $\ell=N$ (i.e., for a complete set of approximate eigen-tuples), revealing inherent differences between RGEP and RMEP in general. Taking a hint from treating RGEP, we properly modify \eqref{eq:RMEP-approx:ell}
to make it possible to create necessary theory and a numerical method.

%
%
%
%

An equivalent homogeneous formulation of RMEP~\eqref{eq:RMEP}
is
\begin{equation}\label{eq:RMEP:k=2:homo}
	\left\{ \begin{aligned}
		\gamma A_1 \bx_1 &= \alpha_1 B_{11} \bx_1 + \alpha_2 B_{12} \bx_1 + \cdots + \alpha_k B_{1k} \bx_1, \\
		&\vdots\\
		\gamma A_k \bx_k &= \alpha_1 B_{k1} \bx_k + \alpha_2 B_{k2} \bx_k + \cdots + \alpha_k B_{kk} \bx_k,
	\end{aligned}\right.\quad
\end{equation}
by expressing each $\lambda_i$ as a pair $(\alpha_i,\gamma)$ in the sense formally
\begin{equation}\label{eq:(lambda,mu):home}
\lambda_i=\alpha_i/\gamma\quad\mbox{for $1\le i\le k$}.
\end{equation}
Without loss of generality, we may assume
$
\mbox{$|\gamma|^2+\sum_{i=1}^k|\alpha_i|^2=1$ and $\gamma\ge 0$}.
$
This homogeneous formulation
makes it possible to include infinite eigenvalues $\lambda_i=\infty$ as a result of $\gamma=0$ and
at the same time can provide with an insight towards the infinities through relative magnitudes among all $\alpha_i$.
Such an equivalent homogeneous formulation will prove handy
during our
detailed investigation for $\ell=N$ in \cref{sec:RMEP:ell=N}.


%
%
%
%
%
%

\section{Seek one approximate eigen-tuple}\label{sec:RMEP:ell=1}
When only one approximate eigen-tuple is desired, i.e., $\ell=1$ in \eqref{eq:RMEP-approx:ell},
we can simplify notation to restate
\eqref{eq:RMEP-approx:ell} as
\begin{subequations}\label{eq:RMEP-approx:ell=1:theta1:0}
\begin{align}
\theta_1 :=&\inf_{  \{(\what A_i, \what B_{i1},\ldots, \what B_{ik})\}_{i=1}^k,
           (\lambda_1,\ldots,\lambda_k,\bx_1,\ldots,\bx_k)}
           \sum_{i=1}^k\big\| \big[\what A_i-A_i, \what B_{i1}-B_{i1},\ldots,\what B_{ik}-B_{ik}\big] \big\|_{\F}^2
               \label{eq:RMEP-approx:ell=1:theta1:0a} \\
\text{s.t.}
		& \quad \left\{
        \begin{array}{l}
          \widehat{A}_i \bx_i = \sum_{s=1}^k\lambda_s \widehat{B}_{is} \bx_i\quad
              \mbox{for $1\le i\le k$}, \\
          \|\bx_i\| _2 =1,\, ~\bx_i \in \bbC^{n_i},\, \lambda_i\in \bbC\quad\mbox{for $1\le i\le k$},
        \end{array}\right.   \label{eq:RMEP-approx:ell=1:theta1:0b}
\end{align}
\end{subequations}
where we have also normalized all $\bx_i$  to be unit vectors.

\subsection{Reformulations of \eqref{eq:RMEP-approx:ell=1:theta1:0}}\label{ssec:reform}
\Cref{thm:RMEP-approx:ell=1} below reformulates \eqref{eq:RMEP-approx:ell=1:theta1:0}
and it generalizes \cite[Theorem 4]{boeg:2005} which is for RGEP.
Our eventual reformulations of \eqref{eq:RMEP-approx:ell=1:theta1:0} is given in \Cref{thm:RMEP-approx:ell=1a}
which leads to our alternating
algorithm for solving \eqref{eq:RMEP-approx:ell=1:theta1:0}.

\begin{theorem}\label{thm:RMEP-approx:ell=1}
For $\theta_1$ in  \eqref{eq:RMEP-approx:ell=1:theta1:0}, we have
\begin{subequations}\label{eq:RMEP-approx:ell=1:theta1:1}
\begin{align}
&\theta_1=\inf_{(\lambda_1,\ldots,\lambda_k,\bx_1,\ldots,\bx_k)}
          \frac {\sum_{i=1}^k\|A_i \bx_i - \sum_{s=1}^k\lambda_s B_{is} \bx_i\|_2^2}
          {1+\sum_{i=1}^k|\lambda_i|^2}
              \label{eq:RMEP-approx:ell=1:theta1:1a}\\
&\mbox{\rm s.t.}\,\,
 \|\bx_i\| _2 =1,\, ~\bx_i \in \bbC^{n_i},\, \lambda_i\in \bbC\quad\mbox{for $1\le i\le k$}.   \label{eq:RMEP-approx:ell=1:theta1:1b}
\end{align}
\end{subequations}
\end{theorem}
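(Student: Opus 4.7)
The plan is to reduce the nested optimization in \eqref{eq:RMEP-approx:ell=1:theta1:0} to the ratio form \eqref{eq:RMEP-approx:ell=1:theta1:1} by first minimizing over the perturbations $\{(\widehat A_i,\widehat B_{i1},\ldots,\widehat B_{ik})\}_{i=1}^k$ with the eigen-tuple $(\lambda_1,\ldots,\lambda_k,\bx_1,\ldots,\bx_k)$ held fixed. The crucial observation is that, after this inner minimization, the objective decouples across the index $i$: the $i$th term of the sum in \eqref{eq:RMEP-approx:ell=1:theta1:0a} depends only on $(\widehat A_i,\widehat B_{i1},\ldots,\widehat B_{ik})$, and the $i$th feasibility constraint in \eqref{eq:RMEP-approx:ell=1:theta1:0b} involves only those same blocks. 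So the inner problem splits into $k$ independent sub-problems.

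For the $i$th sub-problem, I introduce the error blocks $E_i=\widehat A_i-A_i$ and $F_{is}=\widehat B_{is}-B_{is}$ and the concatenated matrix $M_i=[E_i,F_{i1},\ldots,F_{ik}]\in\bbC^{m_i\times(k+1)n_i}$. Then the equality constraint $\widehat A_i\bx_i=\sum_{s=1}^k\lambda_s\widehat B_{is}\bx_i$ rewrites as the single linear equation
\begin{equation*}
M_i\,\bv_i=-\br_i,\qquad
\bv_i=\begin{bmatrix}\bx_i\\-\lambda_1\bx_i\\ \vdots\\-\lambda_k\bx_i\end{bmatrix},\qquad
\br_i=A_i\bx_i-\sum_{s=1}^k\lambda_s B_{is}\bx_i,
\end{equation*}
and the sub-problem becomes $\min\,\|M_i\|_{\F}^2$ subject to $M_i\bv_i=-\br_i$. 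This is the standard minimum-Frobenius-norm problem for a single linear constraint on a matrix. Its solution is obtained by writing any feasible $M_i$ as $M_i=-\br_i\bv_i^{\HH}/\|\bv_i\|_2^2+M_i'$ with $M_i'\bv_i=\bzs$, yielding an orthogonal splitting $\|M_i\|_{\F}^2=\|\br_i\|_2^2/\|\bv_i\|_2^2+\|M_i'\|_{\F}^2$, minimized at $M_i'=0$.

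A short calculation using $\|\bx_i\|_2=1$ gives $\|\bv_i\|_2^2=1+\sum_{s=1}^k|\lambda_s|^2$, so the minimum of the $i$th sub-problem is $\|A_i\bx_i-\sum_{s=1}^k\lambda_s B_{is}\bx_i\|_2^2/(1+\sum_{s=1}^k|\lambda_s|^2)$. Summing over $i$ and then infimizing over $(\lambda_1,\ldots,\lambda_k,\bx_1,\ldots,\bx_k)$ subject to \eqref{eq:RMEP-approx:ell=1:theta1:1b} delivers exactly \eqref{eq:RMEP-approx:ell=1:theta1:1a}.

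I do not anticipate a serious obstacle: the whole argument rests on the decoupling across $i$ and the standard min-norm formula. The only point requiring a little care is the justification that swapping the order of the infima is legitimate, which follows from Fubini-type reasoning because the inner problem has a closed-form minimum for every fixed eigen-tuple and the same substitution back into the outer infimum preserves the value. Feasibility is automatic, since for any $(\lambda_i,\bx_i)$ one may take the explicit $M_i$ above, so the set of $(\widehat A_i,\widehat B_{ij})$ satisfying \eqref{eq:RMEP-approx:ell=1:theta1:0b} is always non-empty. No attainment claim is needed for $\theta_1$, as only the infimum is asserted.
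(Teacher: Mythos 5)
Your proof is correct and follows essentially the same route as the paper: fix the eigen-tuple, minimize the perturbation in closed form (the minimizer being rank-one), and obtain the ratio $\big\|A_i\bx_i-\sum_{s=1}^k\lambda_s B_{is}\bx_i\big\|_2^2/(1+\sum_{s=1}^k|\lambda_s|^2)$ for each $i$ before infimizing over the eigen-tuple. The only difference is mechanical: the paper derives the inner minimum through a chain of triangle/Cauchy--Schwarz inequalities whose equality case is then exhibited by an explicit rank-one perturbation, whereas you invoke the Pythagorean decomposition for the least-Frobenius-norm solution of the linear constraint $M_i\bv_i=-\br_i$, which is arguably cleaner and reaches the same value.
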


\begin{proof}
Let, for $1\le i\le k$,
\begin{equation}\label{eq:EFM}
E_i=\what A_i-A_i,\quad
F_{is}=\what B_{is}-B_{is}\quad \mbox{for $1\le s\le k$}.
\end{equation}
For any $(\lambda_1,\ldots,\lambda_k,\bx_1,\ldots,\bx_k)$ subject to \eqref{eq:RMEP-approx:ell=1:theta1:1b},
we get from $\widehat{A}_i \bx_i = \sum_{s=1}^k\lambda_s \widehat{B}_{is} \bx_i$ that
\begin{equation}\nonumber
E_i \bx_i - \sum_{s=1}^k\lambda_s F_{is} \bx_i
=-\Big(A_i  - \sum_{s=1}^k\lambda_s B_{is}\Big)\bx_i=:-\Bf_i.
\end{equation}
Let $\tau=\sqrt{1+\sum_{s=1}^k|\lambda_s|^2}$. We have
\begin{align}
\|\Bf_i\|_2&=\Big\|E_i \bx_i - \sum_{s=1}^k\lambda_s F_{is} \bx_i\Big\|_2  \nonumber\\
           &\le\|E_i\|_2\|\bx_i\|_2+\sum_{s=1}^k|\lambda_s|\| F_{is}\|_2\|\bx_i\|_2 \nonumber\\
           &=\|E_i\|_2+\sum_{s=1}^k|\lambda_s|\| F_{is}\|_2 \nonumber\\
           &\le\tau\sqrt{\|E_i\|_2^2+\sum_{s=1}^k\| F_{is}\|_2^2}
           \le\tau\sqrt{\|E_i\|_{\F}^2+\sum_{s=1}^k\| F_{is}\|_{\F}^2}. \label{eq:constrait-1b}
\end{align}
It can be verified that all inequalities in \eqref{eq:constrait-1b} become equalities by
\begin{equation}\label{eq:constrait-1c}
E_i=\frac 1{\tau}\cdot {\Bf_i\bx_i^{\HH}}, ~
F_{is}=-\frac {\bar\lambda_s}{\tau}\cdot {\Bf_i\bx_i^{\HH}}\quad \mbox{for $1\le s\le k$}.
\end{equation}%
Therefore under the constraints in \eqref{eq:RMEP-approx:ell=1:theta1:0b}, we have
\begin{align}
\sum_{i=1}^k\big\| \big[\what A_i-A_i, \what B_{i1}-B_{i1},\ldots,\what B_{ik}-B_{ik}\big] \big\|_{\F}^2
  &=\sum_{i=1}^k\Big[\|E_i\|_{\F}^2+\sum_{s=1}^k\| F_{is}\|_{\F}^2\Big] \nonumber\\
  &\ge\frac {\sum_{i=1}^k\big\|\Bf_i\big\|_2^2}{1+\sum_{s=1}^k|\lambda_s|^2} \label{eq:obj>=}\\
  &=\frac {\sum_{i=1}^k\big\|A_i \bx_i - \sum_{s=1}^k\lambda_s B_{is} \bx_i\big\|_2^2}
          {1+\sum_{s=1}^k|\lambda_s|^2}, \nonumber
\end{align}
and the equality sign in \eqref{eq:obj>=} is achieved by $\{(E_i,F_{i1},\ldots, F_{ik})\}_{i=1}^k$ in \eqref{eq:constrait-1c}. Hence optimization problem
\eqref{eq:RMEP-approx:ell=1:theta1:0} can be rephrased as \eqref{eq:RMEP-approx:ell=1:theta1:1}.
\end{proof}

Still in \eqref{eq:RMEP-approx:ell=1:theta1:1}, $\theta_1$ is an infimum, i.e., not necessarily attainable by
some eigen-tuple $(\lambda_1,\ldots,\lambda_k,\bx_1,\ldots,\bx_k)$. This can be overcome if we express
each $\lambda_i$
formally as  a pair $(\alpha_i,\gamma)$ in the sense of \eqref{eq:(lambda,mu):home}
subject to $|\gamma|^2+\sum_{i=1}^k|\alpha_i|^2=1$ and $\gamma\ge 0$. Then
the objective function of \eqref{eq:RMEP-approx:ell=1:theta1:1} becomes
$$
\sum_{i=1}^k\big\|\gamma A_i \bx_i - \sum_{s=1}^k\alpha_s B_{is} \bx_i\big\|_2^2
$$
and consequently we have

\begin{theorem}\label{thm:RMEP-approx:ell=1a}
For $\theta_1$ in  \eqref{eq:RMEP-approx:ell=1:theta1:0} and hence in \eqref{eq:RMEP-approx:ell=1:theta1:1} too, we have
\begin{subequations}\label{eq:RMEP-approx:ell=1:theta1:2}
\begin{align}
&\theta_1=\min_{((\alpha_1,\gamma),\ldots,(\alpha_k,\gamma),\,\bx_1,\ldots,\bx_k)}
          \sum_{i=1}^k\big\|\gamma A_i \bx_i - \sum_{s=1}^k\alpha_s B_{is} \bx_i\big\|_2^2
              \label{eq:RMEP-approx:ell=1:theta1:2a}\\
&\mbox{\rm s.t.}\,\,
 \|\bx_i\|_2=1,\,\,|\gamma|^2+\sum_{s=1}^k|\alpha_s|^2=1\,\,\mbox{for $1\le i\le k$},\,\gamma\ge 0.
        \label{eq:RMEP-approx:ell=1:theta1:2b}
\end{align}
\end{subequations}
\end{theorem}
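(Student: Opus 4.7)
My plan is to recognize that the substitution $\lambda_s=\alpha_s/\gamma$ with $|\gamma|^2+\sum_s|\alpha_s|^2 = 1$ and $\gamma\ge 0$ is really a compactification of the parameter space in \eqref{eq:RMEP-approx:ell=1:theta1:1}, and then to show (a) that the two objective functions agree under this substitution whenever $\gamma>0$, and (b) that adding the boundary $\gamma=0$ does not change the infimum value but guarantees its attainment. Combined with \Cref{thm:RMEP-approx:ell=1}, which already identifies $\theta_1$ with the infimum in \eqref{eq:RMEP-approx:ell=1:theta1:1}, this would immediately yield \eqref{eq:RMEP-approx:ell=1:theta1:2}.

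The first step is a one-line calculation: for any $\gamma>0$, multiplying the numerator and denominator of \eqref{eq:RMEP-approx:ell=1:theta1:1a} by $\gamma^2$ and invoking $\gamma^2+\sum_s|\alpha_s|^2 = 1$ turns the numerator into $\sum_i\|\gamma A_i\bx_i-\sum_s\alpha_s B_{is}\bx_i\|_2^2$ and the denominator into $1$. This establishes that every feasible point of \eqref{eq:RMEP-approx:ell=1:theta1:1} lifts, via $\gamma=1/\sqrt{1+\sum_s|\lambda_s|^2}$ and $\alpha_s=\gamma\lambda_s$, to a feasible point of \eqref{eq:RMEP-approx:ell=1:theta1:2} with $\gamma>0$ and the same objective value, which gives $\theta_1\ge\min$ in \eqref{eq:RMEP-approx:ell=1:theta1:2}.

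For the reverse direction, the delicate case is a feasible point $(\alpha_1^*,\ldots,\alpha_k^*,\gamma^*,\bx_1^*,\ldots,\bx_k^*)$ of \eqref{eq:RMEP-approx:ell=1:theta1:2} with $\gamma^*=0$, which lies outside the image of the above lift. Here I would approach it by a sequence $(\alpha^{(n)},\gamma_n,\bx^*)$ on the unit sphere with $\gamma_n>0$ and $(\alpha^{(n)},\gamma_n)\to(\alpha^*,0)$; setting $\lambda_s^{(n)}:=\alpha_s^{(n)}/\gamma_n$ (which diverges) and applying the identity from the previous paragraph shows that the objective value in \eqref{eq:RMEP-approx:ell=1:theta1:1a} at $(\lambda^{(n)},\bx^*)$ equals the objective value in \eqref{eq:RMEP-approx:ell=1:theta1:2a} at $(\alpha^{(n)},\gamma_n,\bx^*)$, and the latter converges by continuity to the value at $(\alpha^*,0,\bx^*)$. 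Thus $\theta_1$ is bounded above by the objective at every feasible point of \eqref{eq:RMEP-approx:ell=1:theta1:2}, whence $\theta_1\le\min$. The upgrade from $\inf$ to $\min$ is then immediate from Weierstrass, since the constraint set in \eqref{eq:RMEP-approx:ell=1:theta1:2b} is the product of $k$ unit spheres in $\bbC^{n_i}$ with the closed hemisphere $\{|\gamma|^2+\sum_s|\alpha_s|^2=1,\,\gamma\ge 0\}$, all compact, and the objective is continuous. The only real obstacle — and precisely the reason for passing to the homogeneous formulation — is that the infimum in \eqref{eq:RMEP-approx:ell=1:theta1:1} may be approached only through $|\lambda_s|\to\infty$, and the boundary $\gamma=0$ is what encodes this limiting behavior.
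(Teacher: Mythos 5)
Your proposal is correct and follows essentially the same route as the paper: identify the homogeneous objective with that of \eqref{eq:RMEP-approx:ell=1:theta1:1} via the substitution $\lambda_s=\alpha_s/\gamma$, and then invoke compactness of the constraint set \eqref{eq:RMEP-approx:ell=1:theta1:2b} to upgrade the infimum to a minimum. The only difference is one of detail: the paper disposes of the equality of infima by citing the discussion preceding the theorem, whereas you explicitly verify the boundary case $\gamma=0$ by a limiting sequence with $\gamma_n\to 0^+$, which is a worthwhile (and correct) elaboration of the step the paper leaves implicit.
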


\begin{proof}
It follows from \Cref{thm:RMEP-approx:ell=1} and the discussion proceeding to this theorem that
$$
\theta_1=\inf_{((\alpha_1,\gamma),\ldots,(\alpha_k,\gamma),\,\bx_1,\ldots,\bx_k)}
          \sum_{i=1}^k\big\|\gamma A_i \bx_i - \sum_{s=1}^k\alpha_s B_{is} \bx_i\big\|_2^2
$$
subject to the  constraints  in \eqref{eq:RMEP-approx:ell=1:theta1:2b}. The infimum here is attainable
because the constraints in \eqref{eq:RMEP-approx:ell=1:theta1:2b} prescribe a bounded and closed set in $\bbC^{k+1+\sum_{i=1}^kn_k}$.
\end{proof}

\subsection{An alternating scheme}\label{ssec:alternating}
Reformulation in \eqref{eq:RMEP-approx:ell=1:theta1:2} for $\theta_1$ naturally lends itself
to a numerical scheme to optimize the objective function there alternatingly over
$((\alpha_1,\gamma),\ldots,(\alpha_k,\gamma))$
and $(\bx_1,\ldots,\bx_k)$.
In fact, given $((\alpha_1,\gamma),\ldots,(\alpha_k,\gamma))$,
the optimal $(\bx_1,\ldots,\bx_k)$ are given by each $\bx_i$ being
the unit right singular vector of
\begin{equation}\label{eq:Ri}
R_i=\gamma A_i  - \sum_{s=1}^k\alpha_s B_{is}
\end{equation}
associated with its smallest singular value. Note that the dependency of $R_i$ on $[\gamma,\alpha_1,\dots,\alpha_k]^{\T}$ is suppressed for presentation clarity.

On the other hand, given $(\bx_1,\ldots,\bx_k)$, we will show, in what follows, that the minimum of the objective function
is the smallest eigenvalue of some $(k+1)$-by-$(k+1)$ Hermitian matrix whose corresponding eigenvector yields
the optimal $[\gamma,\alpha_1,\ldots,\alpha_k]^{\T}$.
To that end, we write
\begin{equation}\label{eq:Si(xi):v}
R_i\bx_i
=\underbrace{\Big[A_i\bx_i,-B_{i1}\bx_i,\ldots,-B_{ik}\bx_i\Big]}_{=:S_i(\bx_i)}
                        \underbrace{\begin{bmatrix}
                           \gamma \\
                           \alpha_1\\
                           \vdots\\
                           \alpha_k
                         \end{bmatrix}}_{=:\bv}.
\end{equation}
Evidently, $\|\bv\|_2^2=|\gamma|^2+\sum_{s=1}^k|\alpha_s|^2=1$.
Furthermore, the objective function of \eqref{eq:RMEP-approx:ell=1:theta1:2} can be expressed as
\begin{equation}\label{eq:S}
\sum_{i=1}^k\|S_i(\bx_i)\bv\|_2^2
   =\bv^{\HH}\Big[\underbrace{\sum_{i=1}^kS_i(\bx_i)^{\HH}S_i(\bx_i)}_{=:H(\bx_1,\ldots,\bx_k)}\Big]\bv
\end{equation}
whose minimum is the smallest eigenvalue of $H(\bx_1,\ldots,\bx_k)$
attained at the associated unit eigenvector $\bv$. It can be seen that $H(\bx_1,\ldots,\bx_k)$ is positive semidefinite.

\Cref{alg:RMEP-approx:ell=1} outlines our alternating scheme to solve \eqref{eq:RMEP-approx:ell=1:theta1:0}
via \eqref{eq:RMEP-approx:ell=1:theta1:2}.

We now state the KKT conditions of \eqref{eq:RMEP-approx:ell=1:theta1:2}:
\begin{subequations}\label{eq:RMEP-approx:ell=1:KKT}
\begin{alignat}{3}\label{eq:RMEP-approx:ell=1:KKT-1}
R_i^{\HH}R_i\bx_i&= \bx_i \omega_i\,\,&&\mbox{with}\,\,
     &\omega_i&=\|R_i\bx_i\|_2^2\quad\mbox{for $1\le i\le k$},\\
H(\bx_1,\ldots,\bx_k)\bv&=\bv\omega_{k+1}\,\,&&\mbox{with}\,\,
     &\omega_{k+1}&=\bv^{\HH}H(\bx_1,\ldots,\bx_k)\bv. \label{eq:RMEP-approx:ell=1:KKT-2}
\end{alignat}
\end{subequations}

\begin{algorithm}[t]
\caption{Alternating scheme for solving \eqref{eq:RMEP-approx:ell=1:theta1:0}}
	\label{alg:RMEP-approx:ell=1}
\begin{algorithmic}[1]
\REQUIRE $A_i, B_{i1},\ldots,B_{ik}\in \bbC^{m_i\times n_i}$ with $m_i\ge n_i$ for $1\le i\le k$;
\ENSURE  the optimal solution $(\lambda_1,\ldots,\lambda_k,\bx_1,\ldots,\bx_k)$ for \eqref{eq:RMEP-approx:ell=1:theta1:0}.
\STATE choose  an initial guess $(\lambda_1,\ldots,\lambda_k)$;
\STATE $\tau=\sqrt{1+\sum_{s=1}^k|\lambda_s|^2}$, $\gamma=1/\tau$, $\alpha_i=\lambda_i/\tau$ for $1\le i\le k$;
\WHILE{not converged}
	\STATE compute the unit right singular vector $\bx_i$ of $R_i=\gamma A_i  - \sum_{s=1}^k\alpha_s B_{is}$
               associated with its smallest singular value  for $1\le i\le k$;
	\STATE form matrix $H(\bx_1,\ldots,\bx_k)$ as in \eqref{eq:S};
	\STATE compute the unit eigenvector $\bv=[\gamma,\alpha_1,\ldots,\alpha_k]^{\T}$ (with $\gamma\ge 0$)
           of $H(\bx_1,\ldots,\bx_k)$ associated with its smallest
               eigenvalue which is the most-updated $\theta_1$;
\ENDWHILE		
\IF{$\gamma > 0$}
	\STATE  $\lambda_i=\alpha_i/\gamma$ for $1\le i\le k$;
    \RETURN $(\lambda_1,\ldots,\lambda_k, \bx_1,\ldots,\bx_k)$;
\ELSE
	\RETURN $((\alpha_1,0), (\alpha_k,0), \bx_1,\ldots,\bx_k)$ and declaim likely $\theta_1$ is an infimum
            in \eqref{eq:RMEP-approx:ell=1:theta1:0}.
\ENDIF
\end{algorithmic}
\end{algorithm}




\begin{remark}
Some comments for Algorithm \ref{alg:RMEP-approx:ell=1} are in order:
\begin{enumerate}[(1)]
\item A simple and cheap stopping criterion to use at line 3 is if the change
      $|\theta^{(j+1)}_1-\theta^{(j)}_1|$ in the objective value $\theta_1$
      between consecutive iterations, i.e., $\theta^{(j+1)}_1$, and $\theta^{(j)}_1$,  is relatively less than some prescribed tolerance $\epsilon$. In our experiments later, we use
      \begin{equation}\label{eq:reltheta1}
      |\theta^{(j+1)}_1-\theta^{(j)}_1|\le (\theta^{(j+1)}_1+1)\,\epsilon,
      \end{equation}
      with $\epsilon=10^{-6}$.
      There is a possibility that this simple stopping criterion may lead to false convergence if
      at some point during the iteration $\theta_1$ changes little and yet convergence has not occurred.
      Hence it is recommended that, when this simple stopping criterion is met,
      one  also check if the KKT conditions in \eqref{eq:RMEP-approx:ell=1:KKT} are satisfied approximately
      by verifying if the normalized KKT residual
      \begin{equation}\label{eq:KKT}
     \epsilon_{\rm KKT}:=\sum_{i=1}^k\frac {\|R_i^{\HH}R_i\bx_i-\bx_i\omega_i\|_2}{\xi_i}
      +\frac {\|H(\bx_1,\ldots,\bx_k)\bv-\bv\omega_{i+1}\|_2}{\sum_{s=1}^k\xi_s}
      \end{equation}
      is small enough,
      where $\xi_i=\|A_i\|_2^2+\sum_{s=1}^k\|B_{is}\|_2^2$ for $1\le i\le k$.

\item In actual computations, it is unlikely to render $\gamma= 0$ exactly but rather possibly
      $\gamma$ is in the order of the machine roundoff. When that is the case, if all $\alpha_i$
      are far bigger than the machine roundoff in magnitude then all
      $\lambda_i=\alpha_i/\gamma$ are huge. If some of the $\alpha_i$'s are about
      the machine roundoff then the corresponding $\lambda_i$ would deceitfully appear modest but otherwise is not meaningful at all.
      In any case, each $\alpha_i$ provides useful information even if $\gamma$ is in the order of the machine roundoff.
\item The output $(\lambda_1,\ldots,\lambda_k, \bx_1,\ldots,\bx_k)$ can be used to construct the minimal perturbations
    $$
    \what A_i=A_i+E_i,\quad
    \what B_{is}=B_{is}+F_{is}\quad\mbox{for $1\le i\le k$}
    $$
    according to \eqref{eq:constrait-1c}, or in terms of $\gamma,\alpha_1,\ldots,\alpha_k$,
    $$
    E_i=\gamma\cdot\tilde \Bf_i\bx_i^{\HH}, \quad
    F_{is}=-\bar\alpha_i\cdot\tilde \Bf_i\bx_i^{\HH}
    \quad\mbox{for $1\le i\le k$ and $1\le s\le k$},
    $$
    where $\tilde \Bf_i=\gamma A_i \bx_i - \sum_{s=1}^k\alpha_s B_{is} \bx_i$.


\item 
      There is no guarantee that \Cref{alg:RMEP-approx:ell=1} returns a global infimum/minimum.
      One option is to rerun
      \Cref{alg:RMEP-approx:ell=1} with one or more different initial guesses
      $(\lambda_1,\ldots,\lambda_k)$ to increase the possibility of getting a global infimum/minimum.
\end{enumerate}
\end{remark}

\begin{remark}\label{rk:alg-RMEP2RGEP:ell=1}
Specialized  to the case $k=1$ which is for RGEP, the alternating scheme we have described 
so far is mathematically equivalent to
the one in \cite{boeg:2005}, but much elegantly presented and concise.
In fact, corresponding to \Cref{thm:RMEP-approx:ell=1a},
we will have for \eqref{eq:RGEP:eta} with $\ell=1$:
\begin{subequations}\label{eq:RGEP-approx:ell=1:eta1:2}
\begin{align}
&\eta_1=\min_{((\alpha,\gamma),\,\bx)}
          \|(\gamma A-\alpha B)\bx\|_2^2
              \label{eq:RGEP-approx:ell=1:eta1:2a}\\
&\mbox{\rm s.t.}\,\,
 \|\bx\|_2=1,\,|\gamma|^2+|\alpha|^2=1,\,\gamma\ge 0,
        \label{eq:RGEP-approx:ell=1:eta1:2b}
\end{align}
\end{subequations}
where the sought eigenvalue $\lambda$ is represented by pair $(\alpha,\gamma)$ with $\gamma\ge 0$ and understood formally as
$\lambda=\alpha/\gamma$. The alternating scheme to solve \eqref{eq:RGEP-approx:ell=1:eta1:2} goes as follows.
Given $(\alpha,\gamma)$, update $\bx$ to the unit right singular vector of $\gamma A-\alpha B$ associated with its smallest
singular value. On the other hand, given $\bx$, we update $[\gamma,\alpha]^{\T}$ to the unit eigenvector of
$$
[A\bx,-B\bx]^{\HH}[A\bx,-B\bx]\in\bbC^{2\times 2}
$$
associated with its smaller eigenvalue.
\end{remark}

\section{Seek a complete set of approximate eigen-tuples}\label{sec:RMEP:ell=N}
In this section, we
explain how to compute a complete set of $\ell =N=n_1 n_2\cdots n_k$ approximate eigen-tuples via a minimal perturbation formulation
that differs slightly from the earlier \eqref{eq:RMEP-approx:ell} with $\ell=N$. Specifically,
%
we propose to determine the approximate eigen-tuples by
\begin{subequations}\label{eq:RMEP:theta:ell=N}
\begin{align}
\wtd\theta_{N}
   :=& \inf_{  \{(\what A_i, \what B_{i1},\ldots, \what B_{ik})\}_{i=1}^k,
           \{(\lambda_{1,j},\ldots,\lambda_{k,j},\bx_{1,j},\ldots,\bx_{k,j})\}_{j=1}^{N}}
         \sum_{i=1}^k\big\| \big[\what A_i-A_i, \what B_{i1}-B_{i1},\ldots,\what B_{ik}-B_{ik}\big] \big\|_{\F}^2
           \label{eq:RMEP:theta:ell=N-1}\\
\text{s.t.}
		& \left\{
          \begin{array}{l}
		  \widehat{A}_i \bx_{i,j} = \sum_{s=1}^k\lambda_{s,j} \widehat{B}_{is} \bx_{i,j}\quad
              \mbox{for $1\le i\le k,\, 1\le j\le N$}, \\
		  \mbox{$\bx_{1,j}\otimes\bx_{2,j}\otimes\cdots\otimes \bx_{k,j}$ for $j=1,\ldots N$ are linearly independent}, \\
          \mbox{and $\rank\big(\big[\what A_i, \what B_{i1},\ldots, \what B_{ik}\big]\big)\leq n_i$ for $1\le i\le k$}.
          \end{array}\right.
           \label{eq:RMEP:theta:ell=N-2}
\end{align}
\end{subequations}
Its difference from \eqref{eq:RMEP-approx:ell} with $\ell=N$ is the  new rank conditions
$$
\mbox{$\rank\big(\big[\what A_i, \what B_{i1},\ldots, \what B_{ik}\big]\big)\leq n_i$ for $1\le i\le k$}.
$$
Such conditions are motivated by a result in \cite{itmu:2016} for the case of
RGEP: \eqref{eq:RGEP:eta} for $\ell=n$ (a complete set of $n$ approximate eigen-tuples) is equivalent to
\eqref{eq:RGEP:eta:ell=n} whose constraint $\what A=\what BZ$ implies $\rank\big(\big[\what A,\what B\big]\big)\le n$.
%
%
This very result also motivates us to introduce yet another minimal perturbation formulation:
\begin{subequations}\label{eq:RMEP:phi:ell=N}
\begin{align}
\varphi=&\inf_{\{(\what A_i,\what B_{i1},\ldots,\what B_{ik},X_1, \ldots X_k)\}_{i=1}^k} \quad
  \sum_{i=1}^k \big\| \big[\what A_i-A_i, \what B_{i1}-B_{i1},\ldots,\what B_{ik}-B_{ik}\big] \big\|_{\F}^2
           \label{eq:RMEP:phi:ell=N-1}\\
\text{s.t.}
		& \left\{
          \begin{array}{l}
          \what A_i= \what B_{i1}X_{i1}+\cdots+\what B_{ik}X_{ik}, ~ X_{ij}\in \bbC^{n_i\times n_i}\quad\mbox{for $1\le i,j\le k$}, \\
          \rank\big(\big[\what A_i, \what B_{i1},\ldots,\what B_{ik}\big]\big)\leq n_i\quad\mbox{for $1\le i\le k$}.
          \end{array}\right.
           \label{eq:RMEP:phi:ell=N-2}
\end{align}
\end{subequations}
This  formulation resembles \eqref{eq:RGEP:eta:ell=n} for RGEP but naturally replaces $\what A=\what BZ$ in \eqref{eq:RGEP:eta:ell=n} with
$\what A_i= \what B_{i1}X_{i1}+\cdots+\what B_{ik}X_{ik}$.
However,
we do not always have equivalency between \eqref{eq:RMEP:phi:ell=N} and \eqref{eq:RMEP:theta:ell=N}, unlike between
\eqref{eq:RGEP:eta} for $\ell=n$ and \eqref{eq:RGEP:eta:ell=n} in the case of RGEP.
This indicates that there are some fundamental differences in RMEP \eqref{eq:RMEP} between $k=1$ (corresponding to RGEP)
and $k>1$, and hence RMEP in general can be  more complicated than RGEP.

Evidently, problem \eqref{eq:RMEP:phi:ell=N} can be decoupled into
$\varphi=\sum_{i=1}^k\varphi_i$  with
\begin{subequations} \label{eq:RMEP-psi1}
 \begin{align}
\varphi_i:=&\inf_{\what A_i,\what B_{i1},\ldots,\what B_{ik},X_1, \ldots X_k} \quad
            \big\| \big[\what A_i-A_i, \what B_{i1}-B_{i1},\ldots,\what B_{ik}-B_{ik}\big] \big\|_{\F}^2 \label{eq:RMEP-psi1-1}\\
\text{s.t.}
		&\quad\what A_i= \what B_{i1}X_{i1}+\cdots+\what B_{ik}X_{ik}, ~
                   \rank\big(\big[\what A_i, \what B_{i1},\ldots,\what B_{ik}\big]\big)\leq n_i,
                    \label{eq:RMEP-psi1-2}
\end{align}
\end{subequations}
for $1\le i\le k$.
Mathematically, each of the problems for $\varphi_i$  is the same one and so we only need to study one of them.

The importance of \eqref{eq:RMEP:phi:ell=N} lies in that it will eventually lead to a numerical method to
compute a complete set of $N$ approximate eigen-tuples for RMEP \eqref{eq:RMEP} via the truncated SVD, as
\eqref{eq:RGEP:eta:ell=n} does for RGEP.

\subsection{$\varphi_i$ of \eqref{eq:RMEP-psi1}}
We already mentioned that each $\varphi_i$ is mathematically the same. For that reason,
abstractly we
will consider
\begin{subequations} \label{eq:RMEP-approx:phi}
\begin{align}
\phi:=&\inf_{\what A,\what B_1,\ldots,\what B_k,X_1, \ldots X_k} \quad
         \big\| \big[\what{A}-A, \what B_1-B_1,\ldots,\what B_k-B_k\big] \big\|_{\F}^2 \label{eq:RMEP-approx:phi-1} \\
\text{s.t.}
		&\quad\what{A}= \what B_1X_1+\cdots+\what B_kX_k, ~
                  \rank\big(\big[\what{A}, \what B_1,\ldots,\what B_k\big]\big)\leq n,
              \label{eq:RMEP-approx:phi-2}
\end{align}
\end{subequations}
where $A,B_s,\what A,\what B_s\in \bbC^{m\times n}$ and $X_s\in \bbC^{n\times n}$ for $1\le s\le k$, and $m\ge n$.

\begin{lemma}\label{lm:RMEP-approx:phi}
Let  the SVD of $\big[A,  B_1,\ldots, B_k\big]\in\bbC^{m\times (k+1)n}$ be
\begin{equation}\label{eq:RMEP-approx:SVD}
\big[A,  B_1,\ldots, B_k\big]=U\Sigma  V^{\HH},
\end{equation}
where $\wtd m:=\min\{m,(k+1)n\}$ and
$$
U=\kbordermatrix{ &\sss n &\sss \wtd m-n \\
                    & U_1 & U_2 }, \quad
\Sigma=\kbordermatrix{ &\sss n &\sss kn \\
                    \sss n & \Sigma_1 & 0 \\
                    \sss\wtd m-n & 0 &\Sigma_2}, \quad
V=\kbordermatrix{ &\sss n &\sss kn \\
                    \sss n & V_{11}  & V_{12} \\
                    \sss n & V_{21} & V_{22}\\
                          & \vdots & \vdots \\
                    \sss n & V_{k+1\,1} & V_{k+1\,2}},
$$
and $\Sigma_1\in\bbR^{n\times n}$ is diagonal, containing the $n$ largest singular values of $\big[A,  B_1,\ldots, B_k\big]$.
Then the infimum $\phi$ in \eqref{eq:RMEP-approx:phi} is given by
\begin{equation}\label{eq:RMEP-approx:phi-value}
\phi=\sum_{j=n+1}^{\wtd m}\sigma_j^2\big(\big[A,  B_1,\ldots, B_k\big]\big).
\end{equation}
Moreover, if $\|V_{11}\|_2<1$, then the infimum $\phi$ is attained by
\begin{equation}\label{eq:RMEP-approx:phi-attained}
\what{A}=U_1 \Sigma_1V_{11}^{\HH},\quad
\what B_s=U_1 \Sigma_1V_{s+1\,1}^{\HH}\quad\mbox{for $1\le s\le k$}.
\end{equation}
\end{lemma}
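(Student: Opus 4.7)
My plan is to prove this in two steps: first a lower bound via the Eckart--Young--Mirsky (Schmidt--Mirsky) theorem for Frobenius-norm low-rank approximation, and then to exhibit an explicit feasible point that attains that bound under the hypothesis $\|V_{11}\|_2<1$. The key observation is that the objective function is exactly $\|M-\widehat M\|_{\F}^2$, where $M:=[A,B_1,\dots,B_k]$ and $\widehat M:=[\widehat A,\widehat B_1,\dots,\widehat B_k]$, and the second constraint in \eqref{eq:RMEP-approx:phi-2} forces $\rank(\widehat M)\le n$. Hence the lower bound
\begin{equation*}
\phi\;\ge\;\sum_{j=n+1}^{\wtd m}\sigma_j^2\bigl(\bigl[A,B_1,\dots,B_k\bigr]\bigr)
\end{equation*}
is immediate from Eckart--Young applied to the combined matrix, irrespective of the additional column-range constraint $\widehat A=\sum_s\widehat B_sX_s$.

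For attainability, I would take the candidate in \eqref{eq:RMEP-approx:phi-attained}, which is simply the block decomposition of the best rank-$n$ Frobenius approximation $U_1\Sigma_1 V_1^{\HH}$ of $M$, where $V_1=[V_{11}^{\T},V_{21}^{\T},\dots,V_{k+1\,1}^{\T}]^{\T}$. With this choice one automatically has $\widehat M=U_1\Sigma_1V_1^{\HH}$, so $\rank(\widehat M)\le n$ and $\|M-\widehat M\|_{\F}^2=\sum_{j=n+1}^{\wtd m}\sigma_j^2(M)$ by the SVD. What remains is to exhibit $X_1,\dots,X_k\in\bbC^{n\times n}$ satisfying $\widehat A=\sum_s\widehat B_sX_s$. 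Substituting the candidate, this becomes $U_1\Sigma_1V_{11}^{\HH}=U_1\Sigma_1\sum_sV_{s+1\,1}^{\HH}X_s$, and since $U_1\Sigma_1$ has full column rank (because $\sigma_n(M)>0$, which follows from $\|V_{11}\|_2<1$ together with $V_1$ being an isometry), it suffices to solve
\begin{equation*}
V_{11}^{\HH}\;=\;\sum_{s=1}^k V_{s+1\,1}^{\HH}X_s.
\end{equation*}

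The crux of the argument is solving this last equation, and here is where the hypothesis $\|V_{11}\|_2<1$ enters decisively. Orthonormality $V_1^{\HH}V_1=I_n$ reads
\begin{equation*}
V_{11}^{\HH}V_{11}+\sum_{s=1}^kV_{s+1\,1}^{\HH}V_{s+1\,1}\;=\;I_n,
\end{equation*}
so the positive-semidefinite matrix $\sum_{s=1}^kV_{s+1\,1}^{\HH}V_{s+1\,1}=I_n-V_{11}^{\HH}V_{11}$ is positive definite precisely when $\|V_{11}\|_2<1$; under this condition the stacked matrix $W:=[V_{21}^{\HH},\dots,V_{k+1\,1}^{\HH}]\in\bbC^{n\times kn}$ has full row rank, and one may explicitly take $X_s=V_{s+1\,1}(I_n-V_{11}^{\HH}V_{11})^{-1}V_{11}^{\HH}$ for $1\le s\le k$. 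A direct substitution confirms $\sum_sV_{s+1\,1}^{\HH}X_s=(I_n-V_{11}^{\HH}V_{11})(I_n-V_{11}^{\HH}V_{11})^{-1}V_{11}^{\HH}=V_{11}^{\HH}$, completing the verification of feasibility and hence attainability.

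The main obstacle is conceptual rather than computational: one must recognize that the column-space coupling $\widehat A=\sum_s\widehat B_sX_s$ does not actually enlarge the Eckart--Young lower bound, and that the hypothesis $\|V_{11}\|_2<1$ is precisely the solvability condition needed to realize the unconstrained truncated SVD within the constrained feasible set. If $\|V_{11}\|_2=1$ the quadratic system above becomes singular and the infimum may fail to be attained, which is why this assumption is stated explicitly in the lemma.
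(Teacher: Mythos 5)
Your lower bound via Eckart--Young--Mirsky and your attainability construction under $\|V_{11}\|_2<1$ are both correct, and the latter takes a genuinely different route from the paper. Where the paper exploits the orthogonality between the first $n$ and the last $kn$ columns of $V$ (namely $V_{11}^{\HH}V_{12}+\sum_{s}V_{s+1\,1}^{\HH}V_{s+1\,2}=0$), extracts a nonsingular submatrix $(V_{12})_{(:,\bbJ)}$ via rank-revealing QR, and sets $X_s=-(V_{s+1\,2})_{(:,\bbJ)}[(V_{12})_{(:,\bbJ)}]^{-1}$, you use only the orthonormality of the first $n$ columns, $V_{11}^{\HH}V_{11}+\sum_{s}V_{s+1\,1}^{\HH}V_{s+1\,1}=I_n$, and write down the closed form $X_s=V_{s+1\,1}(I_n-V_{11}^{\HH}V_{11})^{-1}V_{11}^{\HH}$. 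This is cleaner: it avoids the column selection entirely and makes the role of $\|V_{11}\|_2<1$ (invertibility of $I_n-V_{11}^{\HH}V_{11}$) transparent. One blemish: your parenthetical claim that $\|V_{11}\|_2<1$ forces $\sigma_n\big([A,B_1,\ldots,B_k]\big)>0$ is false (take the zero matrix, whose $V$ can be chosen with $V_{11}=0$), but it is also unnecessary --- left-multiplying $V_{11}^{\HH}=\sum_s V_{s+1\,1}^{\HH}X_s$ by $U_1\Sigma_1$ yields $\what A=\sum_s\what B_sX_s$ whether or not $U_1\Sigma_1$ has full column rank, so ``it suffices to solve the reduced equation'' stands on its own.

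There is, however, a genuine gap: the first assertion of the lemma, formula \eqref{eq:RMEP-approx:phi-value}, is unconditional, whereas you only prove $\phi\ge\sum_{j=n+1}^{\wtd m}\sigma_j^2$ in general and equality under the extra hypothesis $\|V_{11}\|_2<1$. When $\|V_{11}\|_2=1$ you remark that the infimum ``may fail to be attained,'' but the lemma still asserts that its \emph{value} equals the Eckart--Young bound, and that requires exhibiting feasible points whose objective values converge to it. The paper supplies exactly this: it perturbs $V_{11}$ and $V_{21}$ by $\epsilon I_n$, compensates two of the blocks $F_s=V_{s1}^{\HH}(V_{s2})_{(:,\bbJ)}$ by $\pm\epsilon I_n$ so that their sum remains zero, and builds a feasible tuple $\big(\what A(\epsilon),\what B_s(\epsilon),X_s(\epsilon)\big)$ whose distance to $[A,B_1,\ldots,B_k]$ tends to the bound as $\epsilon\to0$. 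Without some such limiting construction (your closed form suggests a natural alternative: regularize $(I_n-V_{11}^{\HH}V_{11})^{-1}$ and perturb $\what A$ accordingly before passing to the limit), equality in \eqref{eq:RMEP-approx:phi-value} is not established in the degenerate case; this matters downstream, since \Cref{thm:RMEP-approx:ell=N} quotes the value of $\varphi$ without assuming \eqref{eq:RMEP-approx:phi:bothV11<1}.
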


\begin{proof}
First, we note that for any $\big[\what{A}, \what B_1,\ldots,\what B_k\big]\in\bbC^{m\times (k+1)n}$,
if $\rank\big(\big[\what{A}, \what B_1,\ldots,\what B_k\big]\big)\le n$, then
\begin{equation}\label{eq:RMEP-approx:phi-LBD}
\left\|\big[\what{A}, \what B_1,\ldots,\what B_k\big]-\big[A,  B_1,\ldots, B_k\big]\right\|_{\F}^2
   \ge\sum_{j=n+1}^{\wtd m}\sigma_j^2\big(\big[A,  B_1,\ldots, B_k\big]\big),
\end{equation}
regardless of whether $\what{A}= \what B_1X_1+\cdots+\what B_kX_k$ or not, by
the Eckart–Young–Mirsky Theorem \cite{ecka:1936,mirs:1961}. Hence, always
$$
\phi\ge\sum_{j=n+1}^{\wtd m}\sigma_j^2\big(\big[A,  B_1,\ldots, B_k\big]\big).
$$
For $\what{A}$ and $\what B_s$ given by \eqref{eq:RMEP-approx:phi-attained},
evidently,
\begin{equation}\label{eq:RMEP-approx:phi:attainedby}
\left\|\big[\what{A}, \what B_1,\ldots,\what B_k\big]-\big[A,  B_1,\ldots, B_k\big]\right\|_{\F}^2
   =\sum_{j=n+1}^{\wtd m}\sigma_j^2\big(\big[A, B, C\big]\big),
\end{equation}
because $\big[\what{A}, \what B_1,\ldots,\what B_k\big]=U_1 \Sigma_1\Big[V_{11}^{\HH},V_{21}^{\HH},\ldots, V_{k+1\,1}^{\HH}\Big]$,
a truncation of SVD \eqref{eq:RMEP-approx:SVD}.

We know that $\|V_{11}\|_2\le 1$ always.
Suppose for the moment that
\begin{equation}\label{eq:RMEP-approx:phi:V11<1}
\|V_{11}\|_2<1.
\end{equation}
Since $[V_{11}, V_{12}][V_{11}, V_{12}]^{\HH}=I_{n}$, we know $\sigma_{\min}(V_{12})=\sqrt{1-\|V_{11}\|_2^2}>0$ and thus $\rank(V_{12})=n$.
Hence $V_{12}$ has an $n\times n$ nonsingular submatrix,
$(V_{12})_{(:,\bbJ)}$, where $\bbJ$ is a subset of $\{1,2,\ldots,kn\}$ with $n$ elements. ${\bbJ}$ can be obtained via the rank-revealing QR decomposition \cite[section~5.4.2]{govl:2013}: $V_{12}P=QR$, and then $\bbJ$ corresponds to the first $n$ columns of $V_{12}$ selected by the permutation matrix $P$.
Set $\what{A}$ and $\what B_s$ as  in \eqref{eq:RMEP-approx:phi-attained} and let
\begin{align}\label{eq:RMEP-approx:phi:XY}
\begin{bmatrix}
  X_1 \\
  \vdots \\
  X_k
\end{bmatrix}
   =-\begin{bmatrix}
 (V_{22})_{(:,\bbJ)}\\
 \vdots \\
                 (V_{k+1\,2})_{(:,\bbJ)}
               \end{bmatrix}[(V_{12})_{(:,\bbJ)}]^{-1}.
 \end{align}
Since $V$ is unitary, we have
\begin{equation}\label{eq:RMEP-approx:phi:orth}
0=\begin{bmatrix}
V_{11} \\
V_{21}\\
 \vdots \\
V_{k+1\,1}
\end{bmatrix}^{\HH}\begin{bmatrix}
                 V_{12} \\
                 V_{22}\\
 \vdots \\
                 V_{k+1\,2}
               \end{bmatrix}_{(:,\bbJ)}
=\begin{bmatrix}
V_{11} \\
V_{21}\\
 \vdots \\
V_{k+1\,1}
\end{bmatrix}^{\HH}\begin{bmatrix}
                 (V_{12})_{(:,\bbJ)} \\(V_{22})_{(:,\bbJ)}\\
 \vdots \\
                 (V_{k+1\,2})_{(:,\bbJ)}
               \end{bmatrix}.
\end{equation}
On the other hand, $(V_{12})_{(:,\bbJ)}$ is nonsingular as we assumed moments ago. Pre-multiply and post-multiply
\eqref{eq:RMEP-approx:phi:orth} by $U_1\Sigma_1$ and $[(V_{12})_{(:,\bbJ)}]^{-1}$, respectively,
to get  $\what{A}= \what B_1X_1+\cdots+\what B_kX_k$. This, together with \eqref{eq:RMEP-approx:phi-LBD}, \eqref{eq:RMEP-approx:phi:XY} and \eqref{eq:RMEP-approx:phi:attainedby},
imply that $\big(\what A,\what B_1,\ldots,\what B_k,X_1, \ldots, X_k\big)$ is the optimizer
of \eqref{eq:RMEP:phi:ell=N} and achieves  $\phi=\sum_{j=n+1}^{\wtd m}\sigma_j^2\big(\big[A,  B_1,\ldots, B_k\big]\big)$.

On the other hand, if $\|V_{11}\|_2=1$, then $\rank(V_{12})<n$ and it has no $n\times n$ nonsingular submatrix.
Nevertheless, we can construct  $\big(\what A(\epsilon),\what B_1(\epsilon),\ldots,\what B_k(\epsilon),X_1(\epsilon), \ldots, X_k(\epsilon)\big)$
so that
\begin{align*}
&\big\|\big[\what A(\epsilon),\what B_1(\epsilon), \dots, \what B_k(\epsilon)\big]-\big[A, B_1,\dots, B_k\big]\big\|_{\F}^2\rightarrow \phi~ {\rm  as}~ \epsilon\rightarrow 0,\\
&\what A(\epsilon) =\what B_1(\epsilon)\, \what X_1(\epsilon) +\dots+ \what B_k(\epsilon)\,\what X_k(\epsilon).
\end{align*}
To this end,  denote $\wtd V_{i2}= (V_{i2})_{(:,\bbJ)}$ for $1\le i\le k+1$, where $\bbJ$ is any subset of $\{1,2,\ldots,kn\}$ with $n$ elements.  By \eqref{eq:RMEP-approx:phi:orth},
\begin{equation*}
0=
 V_{11}^{\HH} \wtd V_{12}  +V_{21}^{\HH} \wtd V_{22}+\dots+ V_{k+1\,1}^{\HH} \wtd V_{k+1\,2}  =:F_1+F_2+\dots+F_{k+1},
\end{equation*}
where $F_s=V_{s1}^{\HH} \wtd V_{s2}$ for $1\le s\le k+1$.
Let $\epsilon\ne  0$  be sufficiently small so that
\begin{align*}
F_1(\epsilon):=F_1+\epsilon I_{n}, ~F_2(\epsilon):= F_2-\epsilon I_{n},~V_{11}(\epsilon)=V_{11}+\epsilon I_{n},~ V_{21}(\epsilon):= V_{21}+\epsilon I_{n}
\end{align*}
are all nonsingular, and let $\wtd V_{12}(\epsilon)=[V_{11}(\epsilon)]^{-\HH}F_1(\epsilon)$ which is nonsingular because both $V_{11}(\epsilon)$ and $F_1(\epsilon)$ are. Also, let $\wtd V_{22}(\epsilon)=[V_{21}(\epsilon)]^{-\HH}F_2(\epsilon)$.
We have
\begin{align}
(V_{11}(\epsilon))^{\HH} \wtd V_{12}(\epsilon)
    &+(V_{21}(\epsilon))^{\HH} \wtd V_{22}(\epsilon) +V_{31}^{\HH} \wtd V_{32} \dots+V_{k+1\,1}^{\HH} \wtd V_{k+1\,2}
          \nonumber\\
=&V_{11}^{\HH} \wtd V_{12} +V_{21}^{\HH} \wtd V_{22}  +\dots+V_{k+1\,1}^{\HH} \wtd V_{k+1\,2}
 =0.  \label{eq:ABX}
\end{align}
Now choose
\begin{align*}
&\big[\what{A}(\epsilon),\what B_1(\epsilon),\what B_2(\epsilon),\dots,\what B_k(\epsilon)\big]
     =U_1\Sigma_1\big[ (V_{11}(\epsilon))^{\HH},(V_{21}(\epsilon))^{\HH},V_{31}^{\HH},\dots,V_{k+1\,1}^{\HH}\big], \\
&[X_1(\epsilon) ,X_2(\epsilon),\dots, X_k(\epsilon)]
   =-\big[\wtd V_{22}(\epsilon)  (\wtd V_{12}(\epsilon))^{-1},   \wtd V_{32} (\wtd V_{12}(\epsilon))^{-1},\dots, \wtd V_{k+1\,2} (\wtd V_{12}(\epsilon))^{-1}\big].
\end{align*}
It can be verified that  \eqref{eq:ABX} implies
$\what{A}(\epsilon)= \what B_1(\epsilon)\, X_1(\epsilon)+\dots+\what B_k(\epsilon)\, X_k(\epsilon)$.
Hence $$\big(\what A(\epsilon),\what B_1(\epsilon),\ldots,\what B_k(\epsilon),X_1(\epsilon), \ldots, X_k(\epsilon)\big)$$  is feasible for \eqref{eq:RMEP-approx:phi}. On the other hand, with $\big[\what A,\what B_1,\dots, \what B_k\big]$ in \eqref{eq:RMEP-approx:phi-attained},  since
$$
\big\|\big[\what{A}-\what{A}(\epsilon),\what B_1-\what B_1(\epsilon),\dots, \what B_k-\what B_k(\epsilon)\big]\big\|_{\F}\rightarrow 0,~~\mbox{as}~\epsilon \rightarrow0,
$$
we   conclude that the infimum $\phi$ in \eqref{eq:RMEP-approx:phi} is given by \eqref{eq:RMEP-approx:phi-value}.
\end{proof}

\subsection{$\varphi$ is a lower bound of $\wtd\theta_N$}
This subsection establishes $\wtd\theta_N\ge \varphi$. 

\begin{lemma}\label{the:rank1}
For  $\bx_{i,j} \in \bbC^{n_i}$ for $1\le i\le k$ and $j=1,\dots, N:=n_1n_2\cdots n_k$,
if
$$
\bx_{1,j}\otimes\bx_{2,j}\otimes\cdots\otimes\bx_{k,j}\quad\mbox{for $j=1,\ldots, N$}
$$
form a basis of
$\bbC^{n_1}\otimes\bbC^{n_2}\otimes\cdots\otimes\bbC^{n_k}$, then
$$
\rank([\bx_{i,1}, \bx_{i,2},\dots, \bx_{i,N}]) = n_i\quad\mbox{for $1\le i\le k$}.
$$
\end{lemma}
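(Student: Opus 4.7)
I plan to proceed by contrapositive, arguing that if one of the factor-matrices had a rank deficiency, then the $N$ decomposable tensors would be forced to live in a proper subspace of $\bbC^{n_1}\otimes\cdots\otimes\bbC^{n_k}$ and so could not form a basis.

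More concretely, fix an index $i\in\{1,\ldots,k\}$ and set $W_i := \mathrm{span}\{\bx_{i,1},\bx_{i,2},\ldots,\bx_{i,N}\}\subseteq\bbC^{n_i}$. The goal is to show $W_i=\bbC^{n_i}$, equivalently $\dim W_i=n_i$, which is exactly the rank statement. Suppose, for contradiction, that $\dim W_i=:r_i<n_i$. Then every tensor $\bx_{1,j}\otimes\bx_{2,j}\otimes\cdots\otimes\bx_{k,j}$ lies in the subspace
\[
\cU_i:=\bbC^{n_1}\otimes\cdots\otimes\bbC^{n_{i-1}}\otimes W_i\otimes\bbC^{n_{i+1}}\otimes\cdots\otimes\bbC^{n_k},
\]
because the $i$th factor of each simple tensor belongs to $W_i$ by definition.

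The key computation is the dimension count: $\dim\cU_i = n_1\cdots n_{i-1}\cdot r_i\cdot n_{i+1}\cdots n_k < n_1 n_2\cdots n_k = N$, so $\cU_i$ is a proper subspace of the full tensor product space. But the assumption states that $\{\bx_{1,j}\otimes\cdots\otimes\bx_{k,j}\}_{j=1}^{N}$ is a basis of the whole $N$-dimensional space $\bbC^{n_1}\otimes\cdots\otimes\bbC^{n_k}$, and $N$ linearly independent vectors cannot all sit inside a subspace of dimension strictly less than $N$. This contradiction forces $r_i=n_i$ and hence $\rank([\bx_{i,1},\ldots,\bx_{i,N}])=n_i$. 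Since $i$ was arbitrary, the claim holds for all $1\le i\le k$.

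There is essentially no obstacle here; the only thing to be careful about is a clean justification of the fact that $\bbC^{n_1}\otimes\cdots\otimes W_i\otimes\cdots\otimes\bbC^{n_k}$ has dimension $n_1\cdots r_i\cdots n_k$ and really does contain all the simple tensors in question. Both facts are standard properties of the tensor product and follow immediately from choosing any basis of $W_i$ and extending to a basis of $\bbC^{n_i}$; no computation or additional machinery is required.
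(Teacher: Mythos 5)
Your proof is correct and uses essentially the same idea as the paper's: assume a rank deficiency in some factor, observe that all $N$ simple tensors are then confined to a tensor-product subspace of dimension strictly less than $N$, and derive a contradiction with the basis hypothesis. The only cosmetic difference is that you treat one index $i$ at a time (giving the bound $n_1\cdots r_i\cdots n_k < N$), while the paper handles all factors simultaneously and bounds the dimension by $\prod_i j_i$; both are the same dimension-counting argument.
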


\begin{proof}
We prove the results by contradiction. Suppose, to the contrary, that
\begin{align*}
\rank([\bx_{i,1}, \dots, \bx_{i,N}])& = \rank([\bx_{i,i_1},\dots,\bx_{i, i_{j_i}}])=j_i\quad\mbox{for $i=1,\ldots, k$},
\end{align*}
where $j_i\le n_i$ and  $\sum_{i=1}^kj_i< \sum_{i=1}^k n_i$. We have
$\bx_{i,j}\in {\rm span}\{\bx_{i,i_1},\dots,\bx_{i, i_{j_i}}\}$ for $1\le i\le k,~ 1\le j\le N$.
Denote by $\bbI_i=\{i_1,\dots,i_{j_i}\}$.
This implies that
$$
\bx_{1,j}\otimes\bx_{2,j}\otimes\cdots\otimes\bx_{k,j} \in \text{span}\{
     \bx_{1,\wtd j_1}\otimes\bx_{2,\wtd j_2}\otimes\cdots\otimes\bx_{k,\wtd j_k}\,:\,  \wtd j_i \in \bbI_i,~1\le i\le k\}.
$$
But
$$
\dim\left(\text{span}\left\{\bx_{1,\wtd j_1}\otimes\bx_{2,\wtd j_2}\otimes\cdots\otimes\bx_{k,\wtd j_k}
    \,:\, \wtd j_i \in \bbI_i,~1\le i\le k\right\}\right)\le \prod_{i=1}^kj_i < N,
$$
contradicting that $\bx_{1,j}\otimes\bx_{2,j}\otimes\cdots\otimes\bx_{k,j}$ for $j=1,\ldots, N$ form  a basis of $\bbC^{n_1} \otimes \dots\otimes \bbC^{n_k}$.
\end{proof}

\begin{theorem}\label{thm:thetaN>=varphi}
For $\wtd\theta_N$ and $\varphi$ given by \eqref{eq:RMEP:theta:ell=N} and \eqref{eq:RMEP:phi:ell=N}, respectively, we have  $\wtd\theta_N\geq\varphi$.
\end{theorem}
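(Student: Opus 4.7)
The plan is to show that every feasible point of the optimization problem defining $\wtd\theta_N$ produces a feasible point of the optimization problem defining $\varphi$ with the same objective value; the inequality $\wtd\theta_N\ge\varphi$ then follows immediately by taking infima, since the objective functions in \eqref{eq:RMEP:theta:ell=N-1} and \eqref{eq:RMEP:phi:ell=N-1} coincide.

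Fix a feasible tuple $\{(\what A_i,\what B_{i1},\ldots,\what B_{ik})\}_{i=1}^k$ together with the approximate eigen-tuples $\{(\lambda_{1,j},\ldots,\lambda_{k,j},\bx_{1,j},\ldots,\bx_{k,j})\}_{j=1}^N$ for \eqref{eq:RMEP:theta:ell=N}. The rank constraint $\rank([\what A_i,\what B_{i1},\ldots,\what B_{ik}])\le n_i$ is common to both formulations, so the only thing to verify is the linear relation $\what A_i=\what B_{i1}X_{i1}+\cdots+\what B_{ik}X_{ik}$ for some $X_{is}\in\bbC^{n_i\times n_i}$. For each $i$, collect the eigenvectors into $X_i:=[\bx_{i,1},\ldots,\bx_{i,N}]\in\bbC^{n_i\times N}$ and set $D_{is}:=\diag(\lambda_{s,1},\ldots,\lambda_{s,N})$. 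The $N$ eigenvalue equations in \eqref{eq:RMEP:theta:ell=N-2} can be bundled into a single matrix identity
\begin{equation*}
\what A_i X_i=\sum_{s=1}^k \what B_{is}\, X_i\, D_{is}.
\end{equation*}

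Because the tensor products $\bx_{1,j}\otimes\cdots\otimes\bx_{k,j}$ for $1\le j\le N$ are linearly independent in the $N$-dimensional space $\bbC^{n_1}\otimes\cdots\otimes\bbC^{n_k}$, they form a basis, so \Cref{the:rank1} applies and yields $\rank(X_i)=n_i$. Therefore $X_i$ has full row rank and admits a right inverse $X_i^{\dagger}\in\bbC^{N\times n_i}$ with $X_iX_i^{\dagger}=I_{n_i}$. Multiplying the bundled identity on the right by $X_i^{\dagger}$ gives
\begin{equation*}
\what A_i=\sum_{s=1}^k \what B_{is}\,\bigl(X_i D_{is} X_i^{\dagger}\bigr),
\end{equation*}
so the choice $X_{is}:=X_i D_{is}X_i^{\dagger}\in\bbC^{n_i\times n_i}$ realizes the constraint in \eqref{eq:RMEP:phi:ell=N-2}.

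Hence $\{(\what A_i,\what B_{i1},\ldots,\what B_{ik},X_{i1},\ldots,X_{ik})\}_{i=1}^k$ is feasible for \eqref{eq:RMEP:phi:ell=N}, and its objective value equals that of the original feasible point for \eqref{eq:RMEP:theta:ell=N}. Taking infimum over all feasible points of \eqref{eq:RMEP:theta:ell=N} delivers $\wtd\theta_N\ge\varphi$. The only step requiring any care is the passage from the $N$ scalar eigenvalue equations to a single matrix identity and the invocation of \Cref{the:rank1} to guarantee that $X_i$ has a right inverse; everything else is a direct comparison of constraints.
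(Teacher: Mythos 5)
Your proof is correct and follows essentially the same route as the paper: both arguments hinge on \Cref{the:rank1} to get $\rank([\bx_{i,1},\ldots,\bx_{i,N}])=n_i$ and then build the matrices $X_{is}$ by conjugating the diagonal eigenvalue matrices with the eigenvector matrix. The only cosmetic differences are that the paper extracts a nonsingular $n_i\times n_i$ submatrix $Z_i$ and sets $X_{is}=Z_i\Lambda_{i,s}Z_i^{-1}$ whereas you use a right inverse of the full $n_i\times N$ matrix, and the paper phrases the infimum comparison via an explicit $\epsilon$-argument rather than mapping feasible points directly.
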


\begin{proof}
For any $\epsilon>0$, there is a feasible collection of variables:
$$
\big\{\big(\what A_i(\epsilon), \what B_{i1}(\epsilon), \what B_{ik}(\epsilon)\big)\big\}_{i=1}^k,~
\big\{\big(\lambda_{1,j}(\epsilon),\dots,\lambda_{k,j}(\epsilon),\bx_{1,j}(\epsilon),\dots,\bx_{k,j}(\epsilon)\big)\big\}_{j=1}^N
$$
to \eqref{eq:RMEP:theta:ell=N} so that the associated objective function value
$$
\sum_{i=1}^k
  \big\| \big[\widehat{A}_i (\epsilon)- A_i, \widehat{B}_{i1}(\epsilon)- B_{i1},\dots, \widehat{B}_{ik}(\epsilon)- B_{ik}\big] \big\|_{\F}^2
\le \wtd\theta_N+\epsilon.
$$
By Lemma \ref{the:rank1}, for each $1\le i\le k$, we can choose $n_i$ vectors $\bx_{i,{i_1}}(\epsilon),\dots,\bx_{{i,{i_{n_i}}}}(\epsilon)$ from $\{\bx_{i,j}(\epsilon)\}_{j=1}^N$
such that $Z_i(\epsilon)=[\bx_{i,{i_1}}(\epsilon),\dots,\bx_{{i,{i_{n_i}}}}(\epsilon)]$ is nonsingular.
Let
\begin{equation*}
\Lambda_{i,j}(\epsilon)=\diag(\lambda_{{j},i_1}(\epsilon),\dots,\lambda_{j,{i_{n_i}}}(\epsilon)), ~~
X_{ij}=Z_i(\epsilon)\,\Lambda_{i,j}(\epsilon)\,(Z_i(\epsilon))^{-1}, ~~1\le i,j\le k.
\end{equation*}
It can be seen that
$\big\{\big(\what A_i(\epsilon), \what B_{i1}(\epsilon), \dots \what B_{ik}(\epsilon),  X_{i1},\dots, X_{ik}\big)\big\}_{i=1}^k$
satisfy \eqref{eq:RMEP:phi:ell=N-2}, which proves that $\varphi\le \wtd\theta_N+\epsilon$. Letting $\epsilon\rightarrow 0$ gives $\varphi\le \wtd\theta_N$.
\end{proof}

\subsection{When does $\wtd\theta_N= \varphi$?}\label{ssec:thetaequal}
We just showed that $\wtd\theta_N\ge \varphi$, but ideally we would like to have $\wtd\theta_N= \varphi$.
Unfortunately, this cannot be guaranteed in general.
In this subsection, we will derive sufficient conditions under which $\wtd\theta_N = \varphi$, among others.

Lemma~\ref{lm:RMEP-approx:phi} applied to each $\varphi_i$ of \eqref{eq:RMEP-psi1} leads to the following theorem.

\begin{theorem}\label{thm:RMEP-approx:ell=N}
Let  the SVDs of $[A_i, B_{i1},\ldots,B_{ik}]\in\bbC^{m_i\times (k+1)n_i}~(1\le i\le k)$ be
\begin{subequations}\label{eq:AiBis:SVD}
\begin{equation}\label{eq:AiBis:SVD-1}
[A_i, B_{i1},\ldots,B_{ik}]=U^{i}\Sigma^i  (V^i)^{\HH}
\end{equation}
where $\wtd m_i:=\min\{m_i,(k+1)n_i\}$,
\begin{equation}\label{eq:AiBis:SVD-2}
U^i=\kbordermatrix{ &\sss n_i &\sss \wtd m_i-n_i \\
                    & U^i_1 & U^i_2 }, \,
\Sigma^i=\kbordermatrix{ &\sss n_i &\sss kn_i \\
                    \sss n_i & \Sigma^i_1 & 0 \\
                    \sss\wtd m_i-n_i & 0 &\Sigma^i_2}, \,
V^i=\kbordermatrix{ &\sss n_i &\sss kn_i \\
                    \sss n_i & V^i_{11}  & V^i_{12} \\
                    \sss n_i & V^i_{21} & V^i_{22}\\
                             &\vdots & \vdots \\
                    \sss n_i & V^i_{k+1\,1} & V^i_{k+1\,2}},
\end{equation}
\end{subequations}
and $\Sigma_1^i\in\bbR^{n_i\times n_i}$ is diagonal, containing the $n_i$ largest singular values of $[A_i, B_{i1},\ldots,B_{ik}]$.
Then  the infimum $\varphi$ of \eqref{eq:RMEP:phi:ell=N} satisfies
$$
\varphi = \sum_{i=1}^k\varphi_i
        = \sum_{i=1}^k\sum_{j=n_i+1}^{\wtd m_i} \sigma_j^{2}\big([A_i, B_{i1},\ldots,B_{ik}]\big).
$$
If
\begin{equation}\label{eq:RMEP-approx:phi:bothV11<1}
		\|V_{11}^{i}\|_2<1  ~\mbox{for}~i=1,\dots,k,
\end{equation}
then $\varphi$ is attained by, for $1\le i\le k$,
\begin{equation}\label{eq:RMEP-TSVD:pert-ABC}
\what A_i=U_1^i\Sigma_1^i (V_{11}^i)^{\HH}, \quad
\what B_{is}=U_1^i\Sigma_1^i(V_{s+1\,1}^i)^{\HH}\quad
 \quad\mbox{for $1\le s\le k$}.
\end{equation}
\end{theorem}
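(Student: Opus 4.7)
The plan is to exploit the separability of problem \eqref{eq:RMEP:phi:ell=N} across the index $i$, which was already noted in the text as the decomposition $\varphi = \sum_{i=1}^k \varphi_i$ with each $\varphi_i$ defined by \eqref{eq:RMEP-psi1}. Because each subproblem for $\varphi_i$ involves a disjoint set of variables $(\what A_i, \what B_{i1},\ldots,\what B_{ik}, X_{i1},\ldots,X_{ik})$ and carries its own independent constraints, the infimum of the joint problem is rigorously the sum of infima of the subproblems, with a joint minimizer obtainable by concatenating individual minimizers.

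Next, I would observe that each subproblem for $\varphi_i$ is exactly an instance of the abstract problem \eqref{eq:RMEP-approx:phi} analyzed in Lemma~\ref{lm:RMEP-approx:phi}, under the identification $(m,n,A,B_1,\ldots,B_k)\leftrightarrow (m_i,n_i,A_i,B_{i1},\ldots,B_{ik})$ and $X_s\leftrightarrow X_{is}$. Applying \Cref{lm:RMEP-approx:phi} to each subproblem yields
\[
\varphi_i = \sum_{j=n_i+1}^{\wtd m_i}\sigma_j^2\big([A_i,B_{i1},\ldots,B_{ik}]\big),
\]
which after summation over $i$ gives the claimed expression for $\varphi$.

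For the attainment statement under \eqref{eq:RMEP-approx:phi:bothV11<1}, I would invoke the second part of \Cref{lm:RMEP-approx:phi}: the condition $\|V_{11}^i\|_2<1$ is precisely the hypothesis required for the infimum $\varphi_i$ to be attained, with the attainer given by the truncated-SVD construction \eqref{eq:RMEP-approx:phi-attained} translated into the present notation, namely $\what A_i=U_1^i\Sigma_1^i(V_{11}^i)^{\HH}$ and $\what B_{is}=U_1^i\Sigma_1^i(V_{s+1\,1}^i)^{\HH}$ for $1\le s\le k$, which is exactly \eqref{eq:RMEP-TSVD:pert-ABC}. When this holds simultaneously for every $i=1,\ldots,k$, concatenating the individual attainers produces a feasible point of \eqref{eq:RMEP:phi:ell=N} whose objective equals $\sum_{i=1}^k\varphi_i=\varphi$.

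There is no real obstacle here, since essentially all the analytic work — the rank-$n_i$ SVD truncation lower bound, the construction of the coupling matrices $X_{is}$ from a nonsingular submatrix of $V_{12}^i$, and the perturbation argument handling boundary cases — has been discharged inside \Cref{lm:RMEP-approx:phi}. The only thing to be careful about is stating clearly that the decomposition is valid (the variables and constraints genuinely decouple across $i$) and that the simultaneous assumption \eqref{eq:RMEP-approx:phi:bothV11<1} is what permits simultaneous attainment in every block. Accordingly, the proof reduces to a clean bookkeeping application of \Cref{lm:RMEP-approx:phi} block by block.
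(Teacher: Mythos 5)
Your proposal is correct and follows exactly the paper's route: the paper gives no separate proof of this theorem, stating only that it follows from applying \Cref{lm:RMEP-approx:phi} to each decoupled subproblem $\varphi_i$ of \eqref{eq:RMEP-psi1}, which is precisely the block-by-block bookkeeping you carry out. Your explicit remark that the variables and constraints genuinely decouple across $i$, so that infima add and attainers concatenate, is the only content beyond the paper's one-line justification, and it is accurate.
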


 An important implication of  Theorem \ref{thm:RMEP-approx:ell=N} is that
 a complete set of $N$ approximate eigen-tuples for RMEP~\eqref{eq:RMEP}  can be calculated from MEP:
\begin{equation}\label{eq:RMEP-TSVD}
	\left\lbrace \begin{aligned}
		(V_{11}^1)^{\HH}\bx_1&=\lambda_1 (V_{21}^1)^{\HH}\bx_1+\cdots+\lambda_k (V_{k+1\,1}^1)^{\HH}\bx_1,\\
            &\vdots \\
		(V_{11}^k)^{\HH}\bx_k&=\lambda_1 (V_{21}^k)^{\HH}\bx_k+\cdots+\lambda_k (V_{k+1\,1}^k)^{\HH}\bx_k.
	\end{aligned}\right.
\end{equation}
We call the method of solving \eqref{eq:RMEP} via  \eqref{eq:RMEP-TSVD} {\em RMEP via Truncated SVD\/}
(RMEPvTSVD). According to \eqref{eq:MEP-GEP}, \eqref{eq:RMEP-TSVD} can be
transformed into GEPs in the form of \eqref{eq:MEP-GEP} for its numerical solution.
The second implication of  Theorem \ref{thm:RMEP-approx:ell=N}
is the sufficient conditions that ensure $\varphi=\wtd\theta_N$ in the next theorem.

\begin{theorem}\label{thm:fullequl}
If \eqref{eq:RMEP-approx:phi:bothV11<1} holds and MEP \eqref{eq:RMEP-TSVD} has $N=n_1n_2\cdots n_k$ linearly independent eigenvectors
\begin{equation}\label{eq:fullequl:zj}
\bz_j=  \bx_{1,j}\otimes\bx_{2,j}\otimes\cdots\otimes\bx_{k,j}\quad\mbox{for $j=1,\dots,N$},
\end{equation}
then $\varphi=\wtd\theta_N$.
\end{theorem}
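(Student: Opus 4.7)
The plan is to establish the reverse inequality $\wtd\theta_N \le \varphi$; combined with \Cref{thm:thetaN>=varphi}, this yields the desired equality $\wtd\theta_N = \varphi$. The strategy is to exhibit an explicit feasible point of \eqref{eq:RMEP:theta:ell=N} whose objective value equals $\varphi$, using the closed-form minimizers provided by \Cref{thm:RMEP-approx:ell=N} together with the MEP solutions guaranteed by hypothesis.

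First, under the assumption $\|V_{11}^i\|_2 < 1$ for all $i$, \Cref{thm:RMEP-approx:ell=N} guarantees that $\varphi$ is attained by the matrices $\what A_i$ and $\what B_{is}$ in \eqref{eq:RMEP-TSVD:pert-ABC}. A key observation is that these particular minimizers all factor through $U_1^i\Sigma_1^i$, namely
\[
\bigl[\what A_i,\, \what B_{i1},\,\ldots,\,\what B_{ik}\bigr]
   = U_1^i\Sigma_1^i\bigl[(V_{11}^i)^{\HH},\,(V_{21}^i)^{\HH},\,\ldots,\,(V_{k+1\,1}^i)^{\HH}\bigr],
\]
which is an $m_i\times (k+1)n_i$ matrix of rank at most $n_i$, so the rank constraints in \eqref{eq:RMEP:theta:ell=N-2} are automatically satisfied.

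Second, I would show that the RMEP equation $\what A_i \bx_i = \sum_{s=1}^k \lambda_s \what B_{is}\bx_i$ for these minimizers is equivalent to the MEP \eqref{eq:RMEP-TSVD}. Since $U_1^i$ has orthonormal columns and $\Sigma_1^i$ is (generically) nonsingular, left-multiplying by $(U_1^i\Sigma_1^i)^\dagger$ peels off that common factor, leaving precisely
\[
(V_{11}^i)^{\HH}\bx_i = \sum_{s=1}^k \lambda_s (V_{s+1\,1}^i)^{\HH}\bx_i,
\]
for each $i$. Hence every eigen-tuple of \eqref{eq:RMEP-TSVD} yields an eigen-tuple of the perturbed RMEP determined by the $\what A_i$ and $\what B_{is}$ from \eqref{eq:RMEP-TSVD:pert-ABC}. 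By hypothesis, \eqref{eq:RMEP-TSVD} admits $N$ such eigen-tuples $(\lambda_{1,j},\ldots,\lambda_{k,j},\bx_{1,j},\ldots,\bx_{k,j})$ whose tensor-product eigenvectors $\bz_j$ from \eqref{eq:fullequl:zj} are linearly independent — which is exactly the linear-independence condition required by \eqref{eq:RMEP:theta:ell=N-2}. Putting these pieces together, the collection $\{(\what A_i,\what B_{i1},\ldots,\what B_{ik})\}_{i=1}^k$ together with $\{(\lambda_{1,j},\ldots,\lambda_{k,j},\bx_{1,j},\ldots,\bx_{k,j})\}_{j=1}^N$ is feasible for \eqref{eq:RMEP:theta:ell=N} and achieves objective value $\varphi$; therefore $\wtd\theta_N \le \varphi$.

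The main obstacle I foresee is the clean transition between the factored form $U_1^i\Sigma_1^i(V_{s+1\,1}^i)^{\HH}$ and the reduced equation in the $V$-factors: it requires $\Sigma_1^i$ to have a trivial null space, equivalently $\rank([A_i,B_{i1},\ldots,B_{ik}])\ge n_i$. If some of the $n_i$ largest singular values of $[A_i,B_{i1},\ldots,B_{ik}]$ happen to vanish, the correspondence between eigen-tuples of \eqref{eq:RMEP-TSVD} and of the perturbed RMEP needs a brief extra argument (e.g., by a small perturbation of $\Sigma_1^i$ and a continuity/limiting argument analogous to the $\epsilon$-construction employed in the proof of \Cref{lm:RMEP-approx:phi}). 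Modulo this technicality, the proof reduces to packaging \Cref{thm:RMEP-approx:ell=N}, the MEP-hypothesis, and the automatic satisfaction of the rank constraints.
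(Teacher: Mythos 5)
Your proposal is correct and follows essentially the same route as the paper: exhibit the attained minimizers \eqref{eq:RMEP-TSVD:pert-ABC} together with the $N$ MEP eigen-tuples as a feasible point of \eqref{eq:RMEP:theta:ell=N} achieving objective value $\varphi$, then invoke \Cref{thm:thetaN>=varphi}. The obstacle you flag at the end is not actually an issue, since the direction you need is only that MEP solutions of \eqref{eq:RMEP-TSVD} yield solutions of the perturbed RMEP, which follows by left-multiplying the MEP equation by $U_1^i\Sigma_1^i$ and requires no invertibility of $\Sigma_1^i$.
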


\begin{proof}
If \eqref{eq:RMEP-approx:phi:bothV11<1} holds, then we will have \eqref{eq:RMEP-TSVD:pert-ABC}.
If also MEP~\eqref{eq:RMEP-TSVD} has $N=n_1n_2\cdots n_k$ linearly independent eigenvectors as in
\eqref{eq:fullequl:zj},
then
the constraints in \eqref{eq:RMEP:theta:ell=N} are satisfied by
$\what A_i$ and $\what B_{is}$ in \eqref{eq:RMEP-TSVD:pert-ABC}, and
those vectors in \eqref{eq:fullequl:zj}. Hence $\wtd\theta_N\le\varphi$ which, together with
\Cref{thm:thetaN>=varphi}, yield $\varphi=\wtd\theta_N$.
\end{proof}

\subsection{Algorithm RMEPvSVD}
To make MEP~\eqref{eq:RMEP-TSVD} look similar to the original RMEP~\eqref{eq:RMEP},
we denote, for $1\le i\le k$,
$$
\wtd A_i=(V_{11}^i)^{\HH},\quad
       \wtd B_{is}=(V_{s+1\,1}^i)^{\HH}\in\bbC^{n_i\times n_i}\quad\mbox{for $1\le s\le k$},
$$
and rewrite MEP~\eqref{eq:RMEP-TSVD} as
\begin{equation}\label{eq:RMEP-approx:ell=N:MEP}
\left\{ \begin{aligned}
\wtd A_1 \bx_1 &= \lambda_1 \wtd B_{11} \bx_1 + \lambda_2 \wtd B_{12} \bx_1 + \cdots + \lambda_k \wtd B_{1k} \bx_1, \\
		&\vdots\\
\wtd A_k \bx_k &= \lambda_1 \wtd B_{k1} \bx_k + \lambda_2 \wtd B_{k2} \bx_k + \cdots + \lambda_k \wtd B_{kk} \bx_k.
	\end{aligned}\right.
\end{equation}
We will solve this MEP by GEPs as explained in Appendix \ref{sec:MEPvGEP}.
The complete procedure of solving RMEP~\eqref{eq:RMEP} via  MEP~\eqref{eq:RMEP-approx:ell=N:MEP} is outlined in \Cref{alg:RMEP-approx:ell=N},
where the normalized residual for an approximate eigen-tuple $(\lambda_1,\ldots,\lambda_k,\bx_1,\ldots,\bx_k)$
with all $\|\bx_i\|_2=1$ is defined as:
\begin{subequations}\label{eq:NRes}
\begin{align}
\rho_i(\lambda_1,\ldots,\lambda_k,\bx_i)
           &= \frac {\| A_i\bx_i-\sum_{s=1}^k\lambda_s B_{is}\bx_i\|_2}
                    {\|A_i\|_2+\sum_{s=1}^k|\lambda_s|\cdot\|B_{is}\|_2}\quad\mbox{for $1\le i\le k$}, \label{eq:NRes-ind}\\
\rho(\lambda_1,\ldots,\lambda_2,\bx_1,\ldots,\bx_k)
             &=\sum_{i=1}^k \rho_i(\lambda_1,\ldots,\lambda_k,\bx_i).
\end{align}
\end{subequations}


\begin{algorithm}[t]
\caption{RMEPvTSVD: solving RMEP~\eqref{eq:RMEP} via Truncated SVD for a complete set of approximate eigen-tuples}
\label{alg:RMEP-approx:ell=N}
\begin{algorithmic}[1]
\REQUIRE $A_i, B_{is}\in \bbC^{m_i \times n_i} ~(m_i> n_i)$ for $1\le s\le k$ and $1\le i\le k$;
\ENSURE  $ N(= n_1n_2\cdots n_k)$ approximate eigen-tuples
         $\{(\lambda_{1,j},\ldots,\lambda_{k,j},\bx_{1,j},\ldots,\bx_{k,j})\}_{j=1}^{N}$.
\STATE compute the SVDs of $[A_i, B_{i1},\ldots, B_{ik}]$  as in \eqref{eq:AiBis:SVD}  for $1\le i\le k$;	
\STATE solve the standard MEP~\eqref{eq:RMEP-approx:ell=N:MEP} for its eigen-tuples
       $\{(\lambda_{1,j},\ldots,\lambda_{k,j},\bx_{1,j},\ldots,\bx_{k,j})\}_{j=1}^{N}$ by GEPs as explained in Appendix \ref{sec:MEPvGEP};
\STATE compute the normalized residuals $\rho(\lambda_{1,j},\ldots,\lambda_{k,j},\bx_{1,j},\ldots,\bx_{k,j})$ as defined
       by \eqref{eq:NRes};
\RETURN the $N$ eigen-tuples $\{(\lambda_{1,j},\ldots,\lambda_{k,j},\bx_{1,j},\ldots,\bx_{k,j})\}_{j=1}^{N}$ along with their normalized residuals $\{\rho(\lambda_{1,j},\ldots,\lambda_{k,j},\bx_{1,j},\ldots,\bx_{k,j})\}_{j=1}^{N}$.
\end{algorithmic}
\end{algorithm}

\begin{theorem}\label{thm:foundation}
If $\rank([A_i, B_{i1},\ldots,B_{ik}])\le n_i$ for $1\le i\le k$, then any solution to MEP~\eqref{eq:RMEP-approx:ell=N:MEP} is a solution to   RMEP~\eqref{eq:RMEP};
otherwise if $\rank([A_i, B_{i1},\ldots,B_{ik}])\ge n_i$ for $1\le i\le k$, then any solution to RMEP~\eqref{eq:RMEP} is a solution to MEP~\eqref{eq:RMEP-approx:ell=N:MEP}.
Particularly, if  $\rank([A_i, B_{i1},\ldots,B_{ik}])= n_i$ for $1\le i\le k$, then the set of solutions of MEP~\eqref{eq:RMEP-approx:ell=N:MEP} and RMEP~\eqref{eq:RMEP}
are the same.
\end{theorem}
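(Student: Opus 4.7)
The natural plan is to exploit the SVD in \eqref{eq:AiBis:SVD} to translate between the original blocks $(A_i, B_{i1},\dots,B_{ik})$ and the ``compressed'' blocks $(\wtd A_i,\wtd B_{i1},\dots,\wtd B_{ik})$ used in MEP~\eqref{eq:RMEP-approx:ell=N:MEP}. The rank hypothesis is precisely the right condition to make one of $\Sigma_2^i$ and $\Sigma_1^i$ behave well: $\rank([A_i,B_{i1},\dots,B_{ik}])\le n_i$ forces $\Sigma_2^i=0$, while $\rank([A_i,B_{i1},\dots,B_{ik}])\ge n_i$ forces $\Sigma_1^i$ to be invertible (since $\Sigma_1^i$ collects the $n_i$ largest singular values). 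I plan to do each direction separately and then combine them for the equality case.

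For the first claim, assume $\rank([A_i,B_{i1},\dots,B_{ik}])\le n_i$. Since $\Sigma_2^i=0$, the SVD \eqref{eq:AiBis:SVD} collapses to $[A_i, B_{i1},\dots,B_{ik}]=U_1^i\Sigma_1^i[(V_{11}^i)^{\HH}, (V_{21}^i)^{\HH},\dots, (V_{k+1\,1}^i)^{\HH}]$, so that
\[
A_i=U_1^i\Sigma_1^i\,\wtd A_i, \qquad B_{is}=U_1^i\Sigma_1^i\,\wtd B_{is}\quad (1\le s\le k).
\]
Then for any solution $(\lambda_1,\dots,\lambda_k,\bx_1,\dots,\bx_k)$ of \eqref{eq:RMEP-approx:ell=N:MEP}, left-multiplying the $i$th equation $\wtd A_i\bx_i=\sum_s\lambda_s\wtd B_{is}\bx_i$ by $U_1^i\Sigma_1^i$ immediately gives $A_i\bx_i=\sum_s\lambda_s B_{is}\bx_i$, so RMEP~\eqref{eq:RMEP} is satisfied.

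For the second claim, assume $\rank([A_i,B_{i1},\dots,B_{ik}])\ge n_i$, so $\Sigma_1^i$ is invertible. The orthonormality of the columns of $U^i=[U_1^i,U_2^i]$ gives $(U_1^i)^{\HH}U_2^i=0$, so pre-multiplying each block of \eqref{eq:AiBis:SVD-1} by $(U_1^i)^{\HH}$ yields
\[
(U_1^i)^{\HH}A_i=\Sigma_1^i(V_{11}^i)^{\HH}=\Sigma_1^i\wtd A_i, \qquad (U_1^i)^{\HH}B_{is}=\Sigma_1^i\wtd B_{is}.
\]
Thus for any RMEP solution $(\lambda_1,\dots,\lambda_k,\bx_1,\dots,\bx_k)$, applying $(U_1^i)^{\HH}$ to $A_i\bx_i=\sum_s\lambda_s B_{is}\bx_i$ produces $\Sigma_1^i\wtd A_i\bx_i=\sum_s\lambda_s\Sigma_1^i\wtd B_{is}\bx_i$, and cancelling the invertible $\Sigma_1^i$ yields the $i$th equation of MEP~\eqref{eq:RMEP-approx:ell=N:MEP}.

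The ``particularly'' statement is immediate: when $\rank([A_i,B_{i1},\dots,B_{ik}])=n_i$ for every $i$, both previous hypotheses hold simultaneously, so the two solution sets coincide. There is really no obstacle here; the only care needed is to notice that $\Sigma_1^i$ sits at the ``large singular values'' block, so the two rank conditions translate cleanly into $\Sigma_2^i=0$ and invertibility of $\Sigma_1^i$ respectively, and to use the semi-unitarity of $U^i$ (which holds even in the tall case $m_i>(k+1)n_i$) when extracting the $(U_1^i)^{\HH}$ component.
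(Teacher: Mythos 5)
Your proof is correct and rests on the same ingredient as the paper's: the rank hypotheses translate into $\Sigma_2^i=0$ (so $[A_i,B_{i1},\dots,B_{ik}]$ equals its truncation $U_1^i\Sigma_1^i[(V_{11}^i)^{\HH},\dots,(V_{k+1\,1}^i)^{\HH}]$) or into invertibility of $\Sigma_1^i$, and the two systems are linked through $U_1^i\Sigma_1^i$. The paper packages this as a norm inequality $\|T_i\bx_i\|_2\ge\|U_1^i\Sigma_1^i[(V_{11}^i)^{\HH}\bx_i-\sum_s\lambda_s(V_{s+1\,1}^i)^{\HH}\bx_i]\|_2$ coming from the orthogonality of $U_1^i$ and $U_2^i$, whereas you multiply the defining equations directly by $U_1^i\Sigma_1^i$ or $(U_1^i)^{\HH}$; this is a purely presentational difference, and your version is if anything slightly more direct.
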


\begin{proof}
With $\what A_i$ and $\what B_{is}$ given in \eqref{eq:RMEP-TSVD:pert-ABC},
the perturbed system
$$
\wtd A_i \bx_i = \lambda_1 \wtd B_{i1} \bx_i + \lambda_2 \wtd B_{i2} \bx_i + \cdots + \lambda_k \wtd B_{ik} \bx_i
\quad\mbox{for $1\le i\le k$}
%
$$	
is equivalent to
$$
U_1^i\Sigma_1^i\big[ (V_{11}^i)^{\HH}\bx_i-\lambda_1 (V_{21}^i)^{\HH}\bx_i-\cdots-\lambda_k (V_{k+1\,1}^i)^{\HH}\bx_i\big]=0
\quad\mbox{for $1\le i\le k$}.
$$
Let $T_i=A_i-\lambda_1 B_{i1}-\cdots-\lambda_k B_{ik}$ for $1\le i\le k$.
Since $\big[\what A_i, \what B_{i1},\ldots, \what B_{ik}\big]$ is the best rank-$n_i$ approximation of
$\big[A_i, B_{i1},\ldots, B_{ik}\big]$, for any $(\lambda_1,\ldots,\lambda_k,\bx_1,\ldots,\bx_k)$, we have
\begin{align}
\left\| T_i\bx_i \right\|_2
  &= \left\| \big[A_i, B_{i1},\ldots, B_{ik}\big]
             \begin{bmatrix}
		       \hm\bx_i \\
		      -\lambda_1 \bx_i \\
               \vdots \\
		      -\lambda_k \bx_i
             \end{bmatrix} \right\|_2  \nonumber\\
	&\ge\left\| \big[\what A_i, \what B_{i1},\ldots, \what B_{ik}\big]
             \begin{bmatrix}
		       \hm\bx_i \\
		      -\lambda_1 \bx_i \\
               \vdots \\
		      -\lambda_k \bx_i
             \end{bmatrix} \right\|_2 \label{eq:tSVDineq} \\
	&= \left\|U_1^i\Sigma_1^i\big[ (V_{11}^i)^{\HH}\bx_i-\lambda_1 (V_{21}^i)^{\HH}\bx_i-\cdots-\lambda_k (V_{k+1\,1}^i)^{\HH}\bx_i\big]\big]\right\|_2.
        \label{eq:tSVDineqb}
\end{align}
Hence, if $\rank([A_i, B_{i1},\ldots,B_{ik}])\le n_i$,
then the equality in \eqref{eq:tSVDineq} holds. Thus, under
$\rank([A_i, B_{i1},\ldots,B_{ik}])\le n_i~(1\le i\le k)$,
whenever $(\lambda_1,\ldots,\lambda_k,\bx_1,\ldots,\bx_k)$ is a solution to MEP~\eqref{eq:RMEP-approx:ell=N:MEP},  we have
$T_i\bx_i=\mathbf{0}~(1\le i\le k)$, implying $(\lambda_1,\ldots,\lambda_k,\bx_1,\ldots,\bx_k)$
is a solution to RMEP~\eqref{eq:RMEP}. Conversely, if $\rank([A_i, B_{i1},\ldots,B_{ik}])\ge n_i$, then $\Sigma_1^i$ is nonsingular, and therefore, any $(\lambda_1,\ldots,\lambda_k,\bx_1,\ldots,\bx_k)$
satisfying RMEP~\eqref{eq:RMEP} is a solution to MEP~\eqref{eq:RMEP-approx:ell=N:MEP}.
In case of $\rank([A_i, B_{i1},\ldots,B_{ik}])= n_i$ $(1\le i\le k)$, the two systems have the same set of solutions.
\end{proof}

Computing $\ell$ approximate eigen-tuples when $1 < \ell < N$ via the minimal perturbation formulation
\eqref{eq:RMEP-approx:ell=1:theta1:0} (or some modification of it)
is challenging as in the case of RGEP. One option is to compute a complete set of $N$ approximate eigen-tuples
and then pick $\ell$ out of them with
the smallest normalized residuals.

The advantage of RMEPvTSVD is its ability in employing existing  methods for MEP as outlined in Appendix \ref{sec:MEPvGEP}.
Unfortunately, the involved GEPs can be of prohibitively large scale  unless $n_i$ is in the 10s or smaller and $k\le 2$.

Finally, we point out that
there is no cheap means to ensure
the requirement that MEP~\eqref{eq:RMEP-approx:ell=N:MEP} possesses $N = n_1\cdots n_k$ linearly independent eigenvectors in general.
There are some sufficient conditions for this requirement in the literature. One of them is as follows
(\cite{atki:1972,kosi:1994}, \cite[Section~4.4]{volk:1988}): for Hermitian $\wtd A_i,\,\wtd B_{is}$ ($1\le i,s\le k$), if the determinant
$$
 \det\left(\begin{bmatrix}
           \bx_1^{\HH} \wtd B_{11}\bx_1& \dots& \bx_1^{\HH} \wtd B_{1k}\bx_k\\
           \vdots& &\vdots \\
           \bx_k^{\HH} \wtd B_{k1}\bx_1& \dots& \bx_k^{\HH} \wtd B_{kk}\bx_k
           \end{bmatrix}
 \right)>0,
$$
for any  $0\ne\bx_i\in \bbC^{n_i}~(1\le i\le k)$, then MEP~\eqref{eq:RMEP-approx:ell=N:MEP} admits $N$ linearly independent eigenvectors. However, verifying this condition is highly nontrivial, if at all possible.

\section{Numerical Experiments}\label{sec:egs}
In this section, we present numerical experiments to evaluate the performance of our proposed methods:
the alternating scheme (Algorithm \ref{alg:RMEP-approx:ell=1}) for computing one approximate eigen-tuple,
and  RMEPvTSVD  (Algorithm \ref{alg:RMEP-approx:ell=N}) for computing a complete set of $N$ approximate eigen-tuples.
All our experiments are, however, with $k=2$ for three reasons: 1) The case $k=2$ appears to be general enough to
demonstrate the effectiveness of our RMEPvTSVD  (Algorithm \ref{alg:RMEP-approx:ell=N}), without loss of generality, and 2) GEPs transformed from a very small scale MEP
can be of huge scale for $k=2$ already and of even much huger scale for $k>2$, and 3) there are reasonably efficient
packages for multiparameter eigenvalue problems, available online for $k=2$, such as \texttt{MultiParEig} \cite{ples:2021},
that facilitate our  experiments for RMEPvTSVD.

We will conduct our evaluations first on randomly generated problems and then on two problems
arising from discretizing differential equations: the multiparameter Sturm-Liouville equation \cite{atmi:2010}, and the Helmholtz equation \cite{amls:2014,eina:2022,ghhp:2012,volk:1988}.
All computations are performed in  MATLAB (R2024b) on a 14-inch MacBook Pro with an M3 Pro chip and 18GB memory.

To be consistent with the conventional notation for $k=2$, we will use $(\lambda,\mu)$ for an eigenvalue tuple, instead of
$(\lambda_1,\lambda_2)$ inherited from previous sections for any $k$.
For an approximate eigen-tuple $(\lambda,\mu,\bx_1,\bx_2)$, we measure its accuracy in terms of the normalized residual
$\rho(\lambda,\mu,\bx_1,\bx_2)=\rho_1(\lambda,\mu,\bx_1)+\rho_2(\lambda,\mu,\bx_2)$
  defined in \eqref{eq:NRes}.
The stopping criterion for  {\Cref{alg:RMEP-approx:ell=1}}
is when either  the number of iterations exceeds 1000, or  \eqref{eq:reltheta1} is fulfilled.


\subsection{Experiments on random problems}\label{ssec:evalAlg12}
We test Algorithm \ref{alg:RMEP-approx:ell=1} on randomly generated matrices $A_i,\,B_{i1},\,B_{i2}~(i=1,2)$ with different pairs $(m_i,n_i)$. Elements of the real and imaginary parts of the matrices are sampled from the standard normal distribution.
Figure~\ref{figure1} plots the objective function in \eqref{eq:RMEP-approx:ell=1:theta1:2},
the normalized residual $\rho_i=\rho_i(\lambda,\mu,\bx_i)$ for $i=1,2$ in \eqref{eq:NRes-ind} and the normalized KKT residual $\epsilon_{\rm KKT}$ in \eqref{eq:KKT}.
It is observed that the objective function value is monotonically decreasing and the normalized KKT residual $\epsilon_{\rm KKT}$
moves towards $0$, as expected.


\begin{figure}[t]
\begin{center}
{\includegraphics[width= 2.95in, height=3.15in]{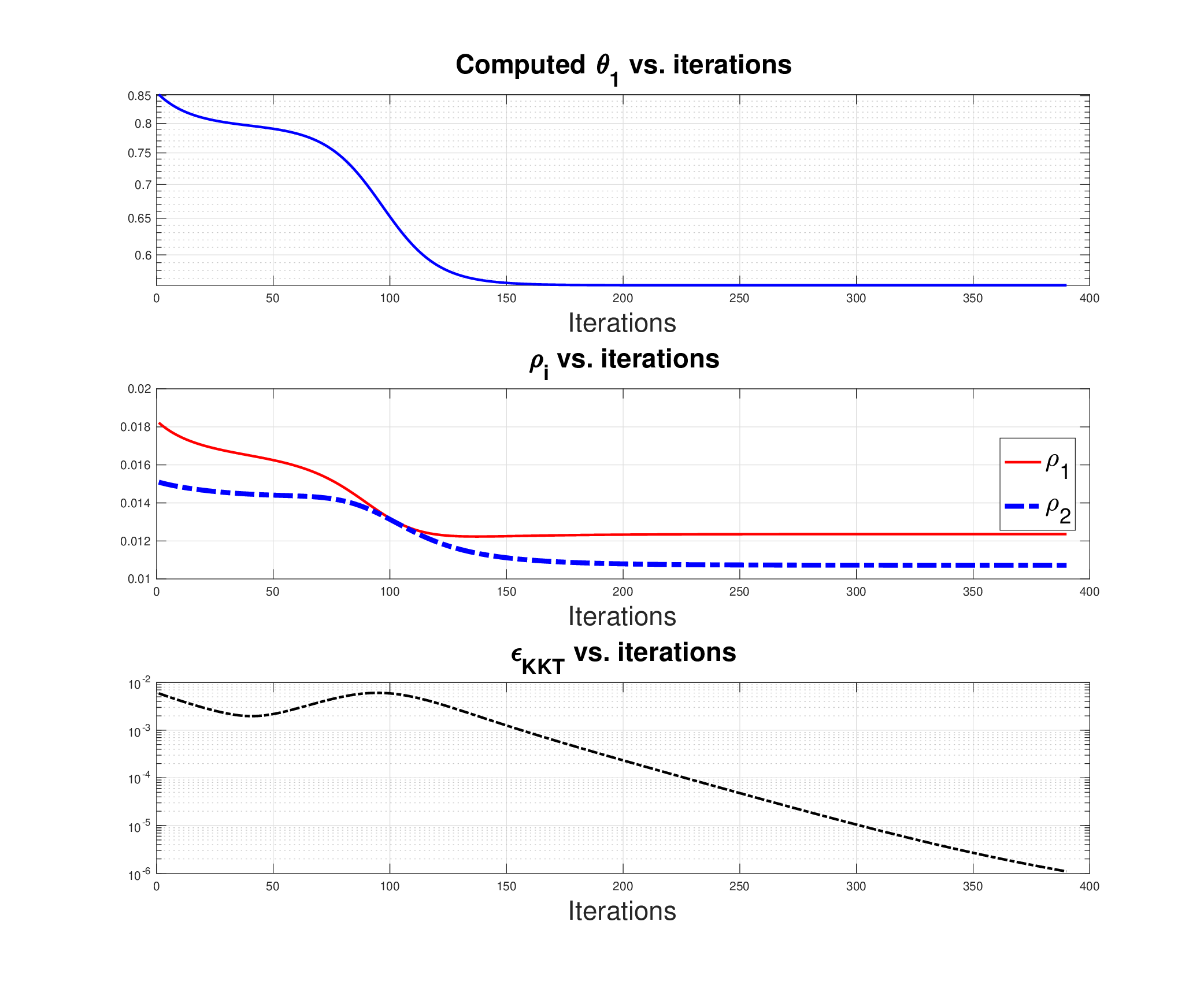}
 \includegraphics[width= 2.95in, height=3.15in]{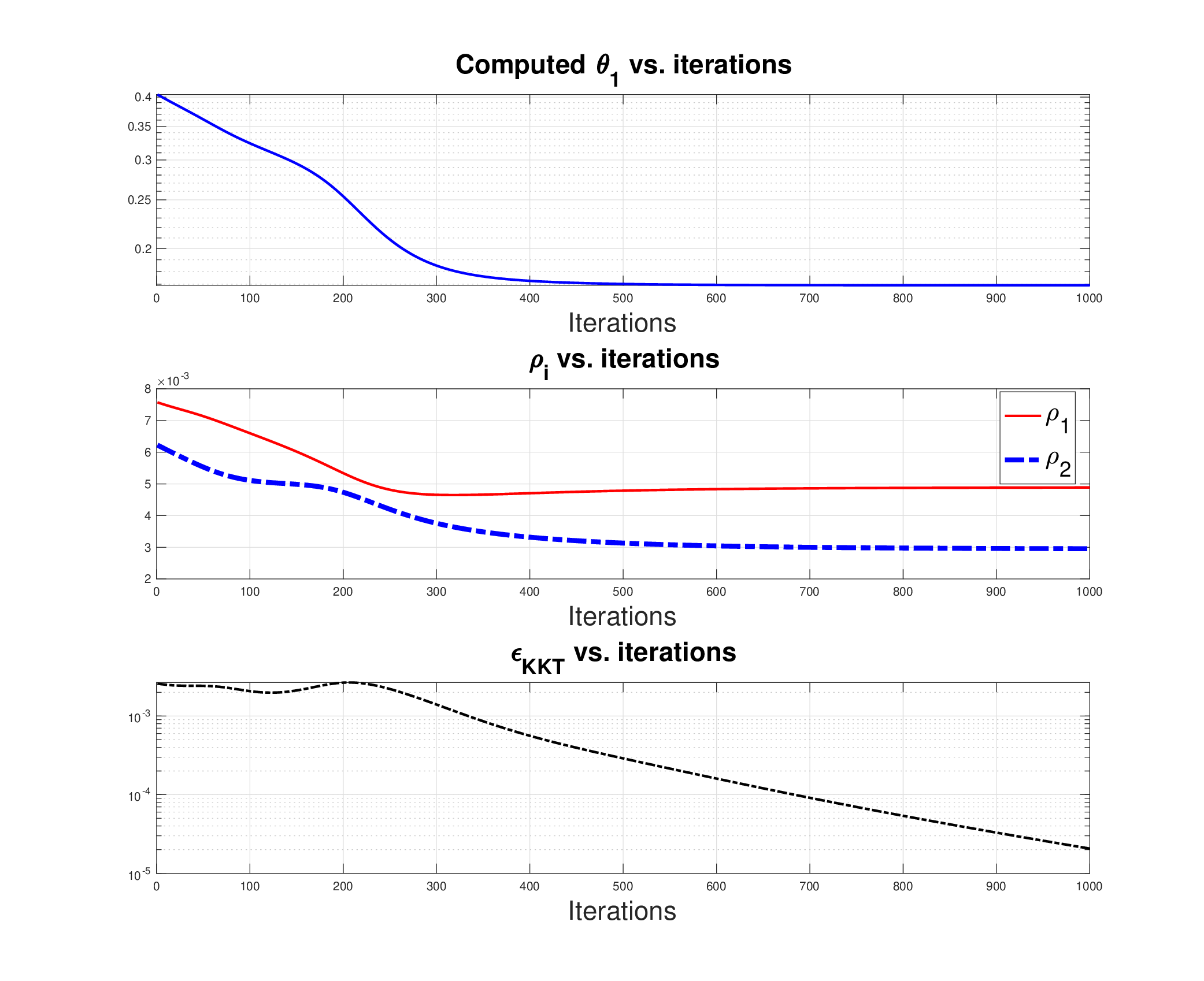}}
\caption{\small Convergence history by \Cref{alg:RMEP-approx:ell=1}.
{\em Left panel:\/} $(m_i,n_i)=(200,190)$ for $i=1,2$;
{\em Right panel:\/} $(m_1,n_1)=(500,490),~(m_2,n_2)=(600,590)$.}
\label{figure1}
\end{center}
\end{figure}


To evaluate RMEPvTSVD (\Cref{alg:RMEP-approx:ell=N}), we adopt a similar way for generating test problems in \cite{boeg:2005} for  RGEP \eqref{eq:RGEP:eta}. Also for the purpose of demonstrating the feasibility of RMEPvTSVD, we use very small $n_i$
(in fact $n_i=5$) to avoid large scale GEPs later on.
Matrices  $A_i, B_{i1},B_{i2} \in \bbC^{m_i\times n_i}$ with $m_i = 20, n_i = 5$ for $i=1,2$ are generated as follows:
\begin{enumerate}[(a)]
	\item generate random  $\check{A}_i, \check{B}_{i1},\check{B}_{i2} \in \bbC^{n_i\times n_i}$,
          and $\check{Q}_i\in \bbC^{m_i\times n_i}$ for $i=1,2$, with elements of the real and imaginary parts  sampled from the standard normal distribution;
	\item compute the thin QR factorization of $\check Q_i$ and let $Q_i\in \bbC^{m_i\times n_i}$ be the $Q$-factor for $i=1,2$;
	\item set $ {A}_i =Q_i \check{A}_i+ E_i,~  {B}_{i1}=Q_i \check{B}_{i1}+ F_{i1},~  {B}_{i2}=Q_i \check{B}_{i2} + F_{i2}$,
          where  elements of the real and imaginary parts of $E_i$, $F_{i1}$ and $F_{i2}$  are sampled from normal distribution  $\textbf{N}(0,\sigma I)$, where parameter $\sigma$ dictates the level of noise.
\end{enumerate}

Generically, the MEP with data matrices $\{\check{A}_i, \check{B}_{i1},\check{B}_{i2}\}_{i=1}^2$ possesses $25$ eigenvalue tuples
$(\lambda_j,\mu_j)$ which, calculated by \texttt{MultiParEig} \cite{ples:2021}, will serve
as the reference eigenvalue tuples. Applying
RMEPvTSVD to  RMEP \eqref{eq:RMEP}, we compute a complete set of $25$ approximate eigenvalue tuples $(\wtd\lambda_j,\wtd\mu_j)$
and compare them with the reference ones $(\lambda_j,\mu_j)$.
The results of a typical test are illustrated in Figure~\ref{fig:noise_comparison_1} for noise levels $\sigma\in\{0,0.2,0.4,0.6\}$, respectively.
We also report in Table~\ref{tab:error_summary} various average errors from 1000 random tests:
the averages  of
\begin{subequations}\nonumber\label{eq:errs-eigvals}
\begin{gather}
\max_{j}\frac{|\lambda_j-\wtd\lambda_j|}{|\lambda_j|+|\wtd\lambda_j|},\quad
\max_{j}\frac{|\mu_j-\wtd\mu_j|}{|\lambda_j|+|\wtd\mu_j|},\quad
\min_{j}\frac{|\lambda_j-\wtd\lambda_j|}{|\lambda_j|+|\wtd\lambda_j|},\quad
\min_{j}\frac{|\mu_j-\wtd\mu_j|}{|\lambda_j|+|\wtd\mu_j|}, \\
\frac{1}{25}\sum_{j=1}^{25}\frac{|\lambda_j-\wtd\lambda_j|}{|\lambda_j|+|\wtd\lambda_j|}, \quad \frac{1}{25}\sum_{j=1}^{25}\frac{|\mu_j-\wtd\mu_j|}{|\mu_j|+|\wtd\mu_j|}
\end{gather}
\end{subequations}
over these 1000 independent runs.

\begin{figure}[H]
	\begin{center}
\small
\begin{tabular}{c|c}
  $\sigma=0$ & $\sigma=0.2$ \\
 \includegraphics[width=2.8in, height=2.8in]{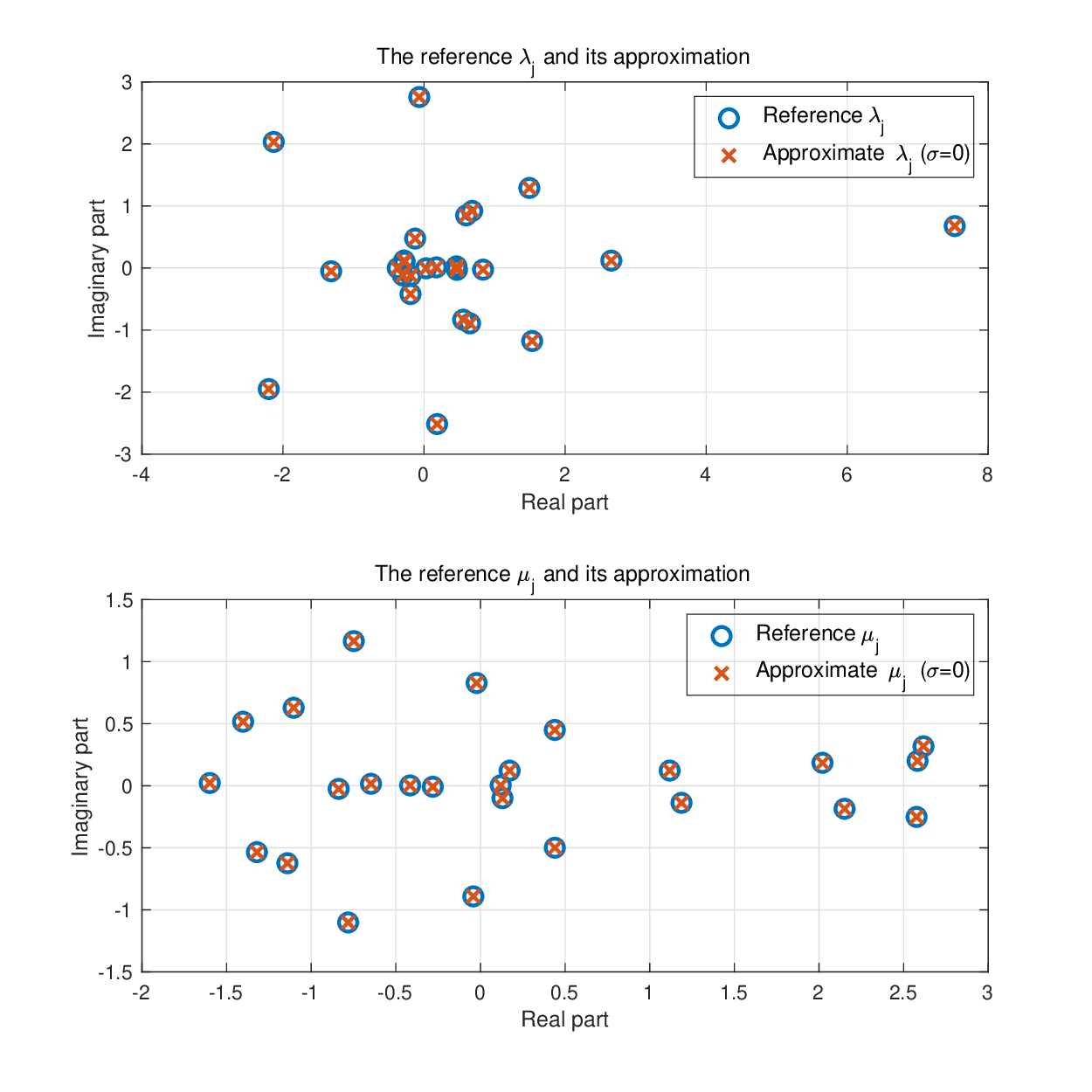} &
		\includegraphics[width=2.8in, height=2.8in]{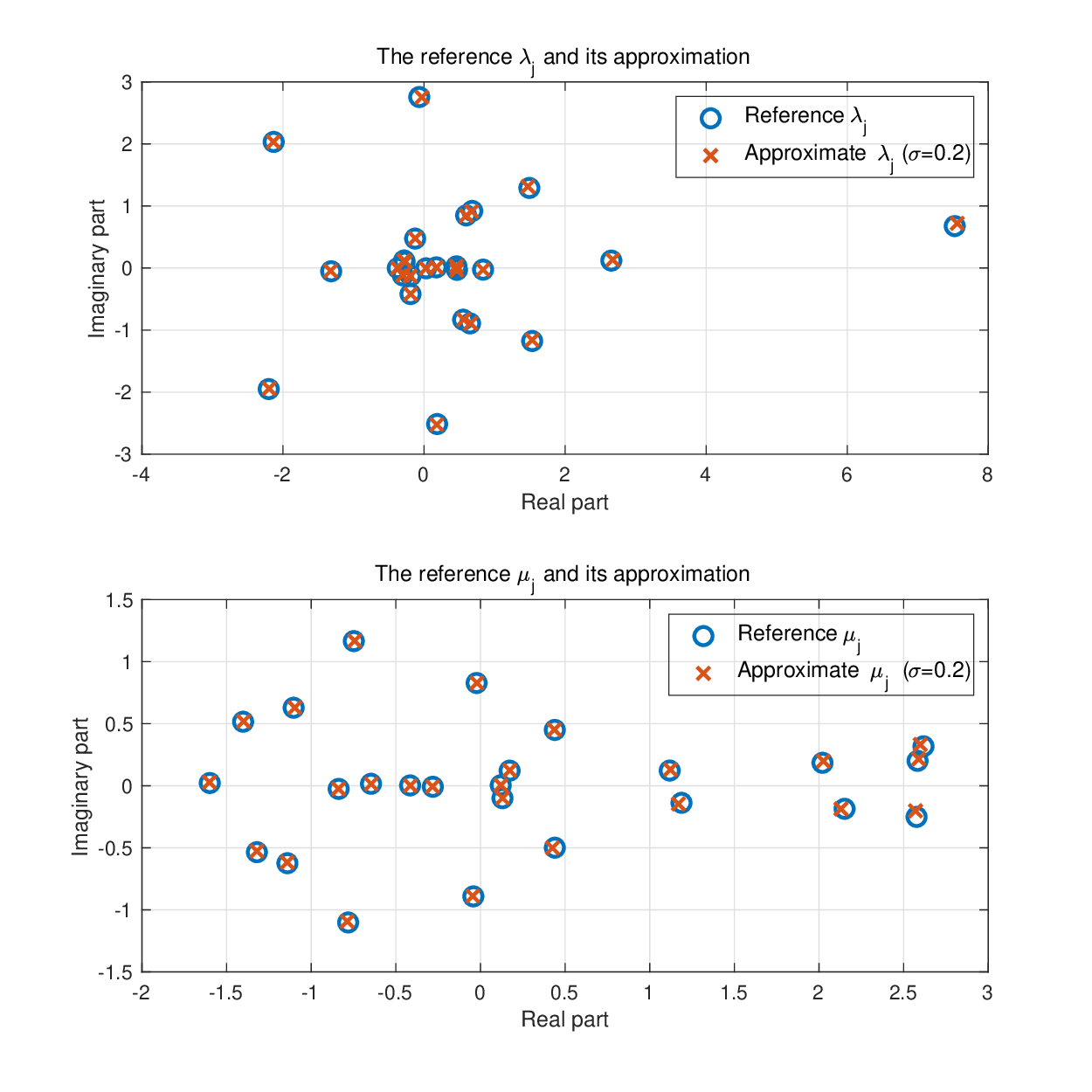} \\
  \hline\hline
  $\sigma=0.4$ & $\sigma=0.6$ \\
 \includegraphics[width=2.8in, height=2.8in]{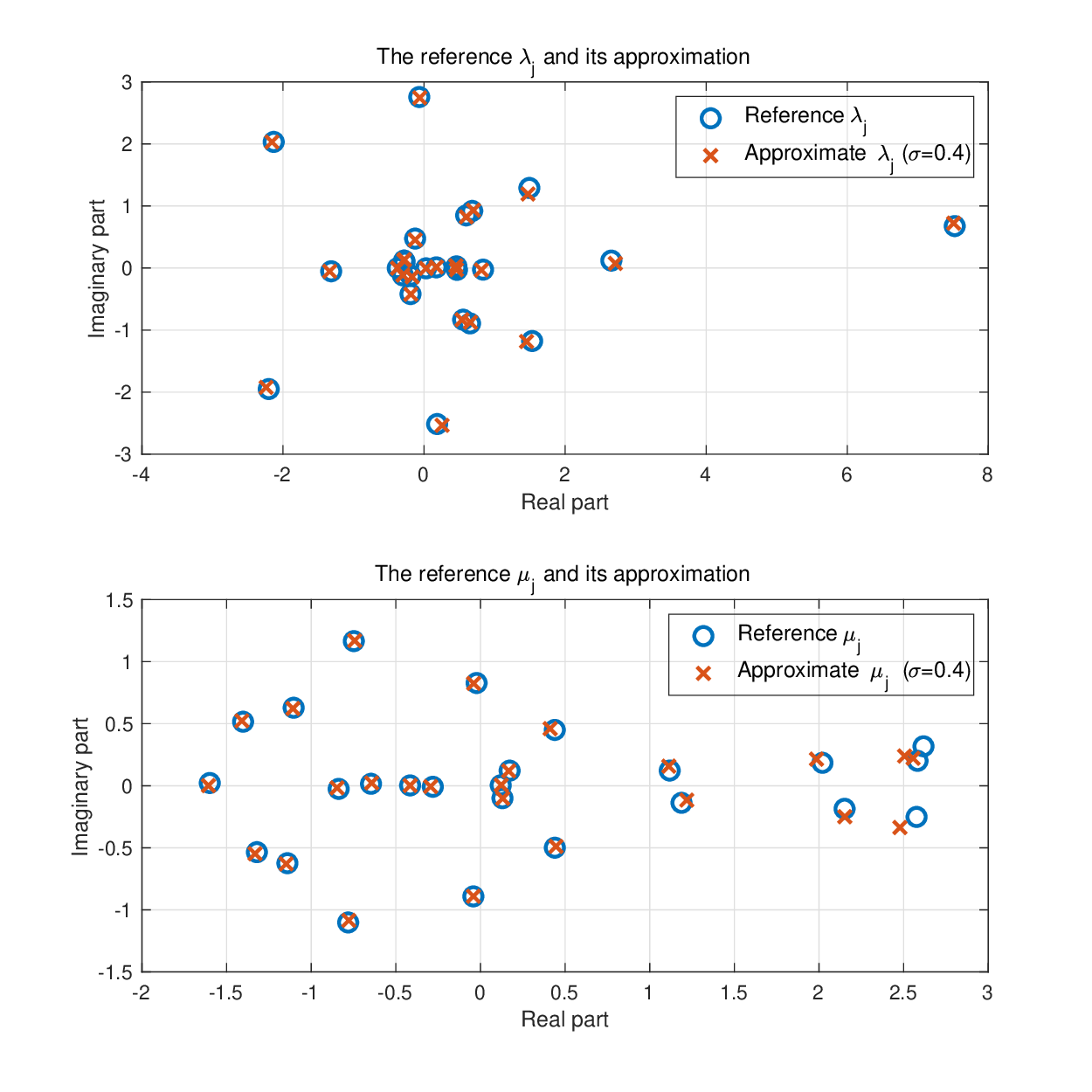} &
		\includegraphics[width=2.8in, height=2.8in]{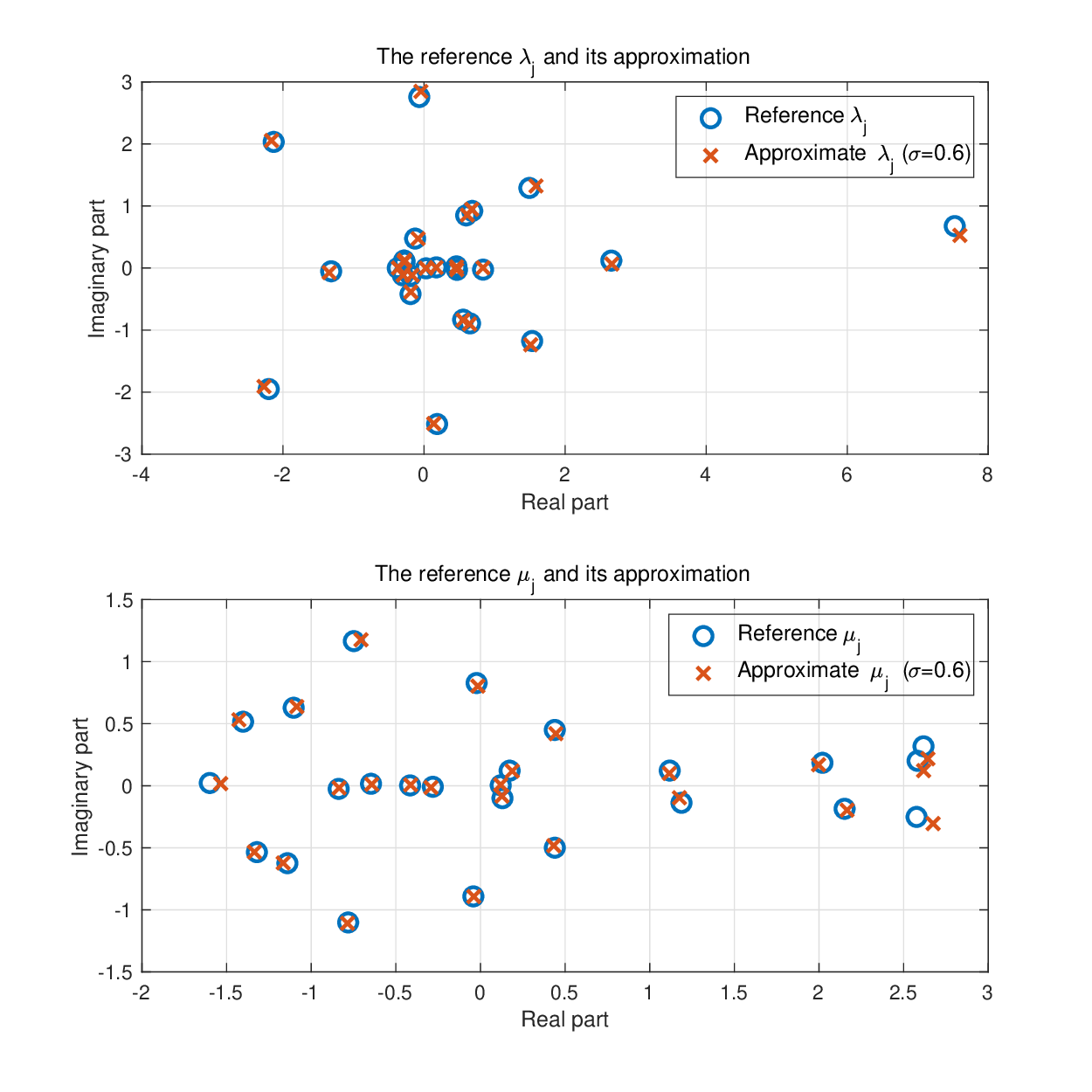}
\end{tabular}
	\end{center}
	\caption{The reference eigenvalue tuples $(\lambda_j,\mu_j)$ and the computed approximations $(\wtd\lambda_j,\wtd\mu_j)$
             by RMEPvTSVD
             as noise level $\sigma$ varies in $\{0,0.2,0.4,0.6\}$.}
	\label{fig:noise_comparison_1}
\end{figure}

\renewcommand{\arraystretch}{1.1}
\begin{table}[t]
	\caption{\small The average errors over 1000 random problems by RMEPvTSVD}	\label{tab:error_summary}
\begin{center}
\small
	\begin{tabular}{c||c|c|c}
		\hline
		 {noise level $\sigma$} & $\max_{j}\frac{|\lambda_j-\wtd\lambda_j|}{|\lambda_j|+|\wtd\lambda_j|}$& $\min_{j}\frac{|\lambda_j-\wtd\lambda_j|}{|\lambda_j|+|\wtd\lambda_j|}$ & \textbf{$\frac{1}{25}\sum_{j=1}^{25}\frac{|\lambda_j-\wtd\lambda_j|}{|\lambda_j|+|\wtd\lambda_j|}$ }
            \vphantom{\vrule height0.5cm width0.9pt depth0.3cm}\\
		\hline
   0                & $6.1305\cdot 10^{-15}$ & $6.9792\cdot 10^{-17}$ & $8.3825\cdot 10^{-16}$ \\
$1.0\cdot 10^{-2}$ & $3.3756\cdot 10^{-3}$ & $3.2333\cdot 10^{-5}$ & $4.1698\cdot 10^{-4}$ \\
$5.0\cdot 10^{-2}$ & $1.5615\cdot 10^{-2}$ & $1.6763\cdot 10^{-4}$ & $2.0001\cdot 10^{-3}$ \\
$1.0\cdot 10^{-1}$ & $3.3347\cdot 10^{-2}$ & $3.4663\cdot 10^{-4}$ & $4.1423\cdot 10^{-3}$ \\
$2.0\cdot 10^{-1}$ & $6.2087\cdot 10^{-2}$ & $6.6219\cdot 10^{-4}$ & $8.0285\cdot 10^{-3}$ \\
$4.0\cdot 10^{-1}$ & $1.1555\cdot 10^{-1}$ & $1.3373\cdot 10^{-3}$ & $1.5545\cdot 10^{-2}$ \\
$6.0\cdot 10^{-1}$ & $1.6234\cdot 10^{-1}$ & $1.9843\cdot 10^{-3}$ & $2.2794\cdot 10^{-2}$ \\
		\hline\hline
		 {noise level $\sigma$} &  $\max_{j}\frac{|\mu_j-\wtd\mu_j|}{|\mu_j|+|\wtd\mu_j|}$& $\min_{j}\frac{|\mu_j-\wtd\mu_j|}{|\mu_j|+|\wtd\mu_j|}$ & \textbf{$\frac{1}{25}\sum_{j=1}^{25}\frac{|\mu_j-\wtd\mu_j|}{|\mu_j|+|\wtd\mu_j|}$ }
    \vphantom{\vrule height0.5cm width0.9pt depth0.3cm} \\ \hline
	0               & $5.8208\cdot 10^{-15}$ & $6.7847\cdot 10^{-17}$ & $8.2996\cdot 10^{-16}$ \\
$1.0\cdot 10^{-2}$ & $3.0145\cdot 10^{-3}$ & $3.3620\cdot 10^{-5}$ & $4.0289\cdot 10^{-4}$ \\
$5.0\cdot 10^{-2}$ & $1.4758\cdot 10^{-2}$ & $1.6793\cdot 10^{-4}$ & $1.9726\cdot 10^{-3}$ \\
$1.0\cdot 10^{-1}$ & $3.1139\cdot 10^{-2}$ & $3.2430\cdot 10^{-4}$ & $4.0505\cdot 10^{-3}$ \\
$2.0\cdot 10^{-1}$ & $5.9885\cdot 10^{-2}$ & $6.7175\cdot 10^{-4}$ & $7.9401\cdot 10^{-3}$ \\
$4.0\cdot 10^{-1}$ & $1.1571\cdot 10^{-1}$ & $1.3120\cdot 10^{-3}$ & $1.5579\cdot 10^{-2}$ \\
$6.0\cdot 10^{-1}$ & $1.6049\cdot 10^{-1}$ & $1.9446\cdot 10^{-3}$ & $2.2646\cdot 10^{-2}$ \\
		\hline
	\end{tabular}
\end{center}
\end{table}

\subsection{Applications of RMEP}\label{ssec:mSLeq}
We next apply our method, RMEPvTSVD,  to solve RMEP arising from discretizing multiparameter  ODE eigenvalue problems  by
the least-squares spectral method.
Consider the following two parameter second-order ODE  eigenvalue problem:
\begin{equation}\label{eq:S-L}
	\left\{
	\begin{aligned}
		&u_1^{\prime\prime}(s)+(\lambda p_1(s)+\mu q_1(s)+f_1(s))u_1(s)=0,\\
		&u_2^{\prime\prime}(t)+(\lambda p_2(t)+\mu q_2(t)+f_2(t))u_2(t)=0,\\
		&u_1(a_1)=u_1(b_1)=0,\quad
		u_2(a_2)=u_2(b_2)=0,
	\end{aligned}
	\right.
\end{equation}
where  $(\lambda,\mu, u_1(\cdot), u_2(\cdot))$ is the eigen-tuple to be computed, and  $p_1, q_1, f_1: [a_1,b_1]\rightarrow \bbR$ and
$p_2, q_2, f_2: [a_2,b_2]\rightarrow \bbR$ are given continuous functions.

Traditionally, the finite difference method (FDM) (see, e.g., \cite{blum:1978b,doyy:2016,fohm:1972,ghhp:2012,jaho:2009}) is the standard approach for solving \eqref{eq:S-L}. Despite its advantages such as handling boundary conditions naturally and transforming the discrete system into an MEP, FDM also has notable limitations: (i)
achieving high accuracy typically requires a fine time-space discretization, leading to large-scale algebraic eigen-systems, and (ii) it only provides nodal approximations of the eigenfunctions, making it less convenient if ``continuous'' eigenfunction approximations are desired. To address these issues, the least-squares spectral method implemented via \texttt{Chebfun} \cite{drht:2014} offers a more versatile and efficient alternative.

\subsubsection{Discretization by the least-squares spectral method}\label{ssubsec:LSchebfun}
With the help of \texttt{Chebfun}, the least-squares spectral method for
the two-parameter {ODE} eigenvalue problem \eqref{eq:S-L} has been previously discussed in \cite{dris:2010,hant:2022,hana:2022}. This method   expresses the eigen-functions as linear combinations of the Chebyshev polynomial basis $\{\tau_j(t)\}$:
\begin{equation}\label{eq:approximation}
u_1(s)\approx \sum_{j=1}^{n_1} \tau_j(s)c_{1j},~~
u_2(t) \approx\sum_{j=1}^{n_2} \tau_j(t)c_{2j},
\end{equation}
where $c_{1j}$ and $c_{2j}$ are the coefficients. Plugging \eqref{eq:approximation} into  \eqref{eq:S-L} leads to \begin{equation}\label{eq:S-L-1}
	\left\{
	\begin{aligned}
		&\sum_{j=1}^{n_1}\left(\tau_j^{\prime\prime}(s)c_{1j}+f_1(s)\tau_j(s)c_{1j}\right) + (\lambda p_1(s)+\mu q_1(s))\sum_{j=1}^{n_1}\tau_j(s)c_{1j} = 0,\\
		&\sum_{j=1}^{n_2}\left(\tau_j^{\prime\prime}(t)c_{2j}+f_2(t)\tau_j(t)c_{2j}\right) + (\lambda p_2(t)+\mu q_2(t))\sum_{j=1}^{n_2}\tau_j(t)c_{2j} = 0,\\
		&\sum_{j=1}^{n_1}\tau_j(a_1)c_{1j}=\sum_{j=1}^{n_1}\tau_j(b_1)c_{1j}= 0,\quad
		\sum_{j=1}^{n_2}\tau_j(a_2)c_{2j}=\sum_{j=1}^{n_2}\tau_j(b_2)c_{2j}= 0.
	\end{aligned}
	\right.
\end{equation}
Define  {quasimatrix}-matrices $\mathcal{L}_i~(i=1,2)$ associated with the differential operators in \eqref{eq:S-L} and vectors ${J}_1(s)$ and $J_2(t)$ for any given $s$ and $t$ as
 \begin{align*}
 \mathcal{L}_i &= [\tau_1^{\prime\prime}+f_i\tau_1, \dots, \tau_{n_i}^{\prime\prime}+f_i\tau_{n_i}]\in\mathbb{R}^{{\infty}\times n_i} ~(i=1,2),\\
{\cal T}_i &= [\tau_1, \dots, \tau_{n_i}]\in\mathbb{R}^{\infty\times n_i}  ~(i=1,2),\\
  {J}_1(s) &= [\tau_1(s), \dots, \tau_{n_1}(s)]\in\mathbb{R}^{1\times n_1}, \quad  {J}_2(t) = [\tau_1(t), \dots, \tau_{n_2}(t)]\in\mathbb{R}^{1\times n_2}.
 \end{align*}
Note that, in \texttt{Chebfun}, certain factorizations of the quasimatrix, for example,
the QR and SVD decompositions, can be conveniently implemented.

With these settings, we can formulate the solution for the coefficient vectors
$$
\bc_i=[c_{i1},\dots,c_{i n_i}]^{\T}\in \bbC^{n_i}~ (i=1,2)
$$
by the following nonlinear least-squares problem:
\begin{equation}\label{eq:S-L-2}
	\min_{\lambda,\mu,~\bc_i, ~i=1,2} \left\| \begin{bmatrix}
		\mathcal{L}_1 \bc_1 + (\lambda p_1 + \mu q_1) \mathcal{T}_1 \bc_1 \\
		\mathcal{L}_2 \bc_2 + (\lambda p_2 + \mu q_2) \mathcal{T}_2 \bc_2 \\
		 {J}_1(a_1) \bc_1 \\
		 {J}_1(b_1) \bc_1 \\
		 {J}_2(a_2) \bc_2 \\
		 {J}_2(b_2) \bc_2
	\end{bmatrix} \right\|_{{L}_2 \oplus \ell_2},
\end{equation}
where the norm $\|\cdot\|_{{L}_2 \oplus \ell_2}$ is composed of two components: one derived from the continuous function space (a quasimatrix norm induced by the $L_2$ integral) and the other being the standard vector 2-norm.
Next, we can use the QR factorizations
\begin{equation}
	\mathcal{L}_i = {\cal Q}_iR_i, \quad
{
\begin{bmatrix}
{J}_i(a_i)\\ {J}_i(b_i)
\end{bmatrix}
}
	  =  \hat{Q}_i\hat{R}_i,~i=1,2,
\end{equation}
 to reformulate \eqref{eq:S-L-2} as
\begin{equation}\label{eq:S-L-3}
	\min_{\lambda,\mu,~\bc_i,~ i=1,2}~\left\|
	\begin{array}{c}
		R_1\bc_1+\lambda G_1\bc_1+\mu K_1\bc_1\\
		\hat{R}_1\bc_1\\
		R_2\bc_2+\lambda G_2\bc_2+\mu K_2\bc_2\\
		\hat{R}_2\bc_2
	\end{array}
	\right\|_2
\end{equation}
where $G_i = {\cal Q}_i^{\texttt{H}}   [p_i\tau_1, \dots, p_i\tau_{n_i}]\in \bbC^{n_i\times n_i}$ and $K_i = {\cal Q}_i^{\texttt{H}} [q_i\tau_1, \dots, q_j\tau_{n_j}]\in \bbC^{n_j\times n_j}$,
and the conjugate transpose  $\bullet^{\tt H}$   is defined through integral operations and can be done conveniently in \texttt{Chebfun} \cite{drht:2014}.

To solve  \eqref{eq:S-L-3}, we follow the idea in \cite{hant:2022,hana:2022} to transform it into  RMEP:
\begin{equation}\label{eq:SLRMEP}
	\left\lbrace \begin{aligned}
		A_1 \bc_1 &= \lambda B_1 \bc_1+ \mu C_1 \bc_1, \\
		A_2 \bc_2 &= \lambda B_2 \bc_2 + \mu C_2 \bc_2,
	\end{aligned}\right.
\end{equation}
where $A_i=\left[ \begin{array}{c}
	R_i\\
	\hat{R}_i
\end{array}\right]$,
$B_{i1}=\left[ \begin{array}{c}
	-G_i\\
	0
\end{array}\right]$,
and
$B_{i2}=\left[ \begin{array}{c}
	-K_i\\
	0
\end{array}\right]$, all in $\bbC^{(n_i+2)\times n_i}$ for $i=1,2$. In what follows, we will solve
two concrete \eqref{eq:SLRMEP} by our proposed method, RMEPvTSVD. In measuring approximation accuracy, besides
the normalized residuals
$\rho_i(\tilde\lambda,\tilde\mu,\tilde\bc_i)$ for $i=1,2$ as defined in \eqref{eq:NRes-ind}
that measure how accurate a computed eigen-tuple $(\tilde\lambda,\tilde\mu,\tilde\bc_1, \tilde\bc_2)$
is as an approximation to the solution of RMEP~\eqref{eq:SLRMEP}, we will also calculate another type of error defined by
\begin{equation}\label{eq:integerr}
\varsigma(\tilde\lambda,\tilde\mu, \tilde u_1,\tilde u_2)
:=\sum_{i=1}^2\underbrace{
     \int_0^1\left|\tilde u_i^{\prime\prime}(t)+(\tilde\lambda p_i(t)+\tilde\mu q_i(t)+f_i(t))\tilde u_i(t)\right|\,dt
         }_{=:\varsigma_i(\tilde\lambda,\tilde\mu, \tilde u_i)},
\end{equation}
where the functions $\tilde u_i~(i=1,2)$ are given by \eqref{eq:approximation}
with the combination coefficients from the computed eigenvector tuples $(\tilde\bc_1, \tilde\bc_2)$.
It is noted that this $\varsigma(\tilde\lambda,\tilde\mu, \tilde u_1,\tilde u_2)$ tells the accuracy of each computed eigen-tuple as an approximate one to
the original continuous two parameter ODE  eigenvalue problem \eqref{eq:S-L}.

\begin{table}[t]
\centering
\caption{\small Computed ones eigenvalue tuples
           $(\tilde\lambda, \tilde\mu)$ {\em vs.}  the exact eigenvalue ones $(\lambda, \mu)$
           in \eqref{eq:SL1-eigenvalues}}
	\label{tab:ev_summary}
\small
	\begin{tabular}{c|c||c|c|c|c}
	\hline
	$i$ &  $j$ & $\lambda(i,j)$   &        $\mu(i,j)$
        &  $|\tilde\lambda(i,j)- \lambda(i,j)|$ & $|\tilde\mu(i,j)- \mu(i,j)|$
           \vphantom{\vrule height0.4cm width 1.2pt depth0.2cm}\\
	\hline
$1$ & $1$ & $\hm 9.8696$  & $ 0$       & $1.7906\cdot 10^{-12}$ & $6.2607\cdot 10^{-16}$  \\
$2$ & $1$ & $24.6740$ & $\hm 14.8044$  & $2.1707\cdot 10^{-12}$ & $3.7481\cdot 10^{-13}$ \\
$1$ & $2$ & $24.6740$ & $-14.8044$ & $2.1814\cdot 10^{-12}$ & $3.8014\cdot 10^{-13}$ \\
$2$ & $2$ & $39.4784$ & $0 $       & $2.5580\cdot 10^{-12}$ & $7.1504\cdot 10^{-16}$ \\
$3$ & $1$ & $49.3480$ & $\hm 39.4784$  & $7.8160\cdot 10^{-13}$ & $1.0303\cdot 10^{-12}$ \\
$1$ & $3$ & $49.3480$ & $-39.4784$ & $7.9581\cdot 10^{-13}$ & $1.0303\cdot 10^{-12}$ \\
$3$ & $2$ & $64.1524$ & $\hm 24.6740$  & $1.1653\cdot 10^{-12}$ & $1.4069\cdot 10^{-12}$ \\
$2$ & $3$ & $64.1524$ & $-24.6740$ & $1.1795\cdot 10^{-12}$ & $1.4033\cdot 10^{-12}$ \\
$1$ & $4$ & $83.8916$ & $-74.0220$ & $8.5976\cdot 10^{-12}$ & $6.7644\cdot 10^{-12}$ \\
$4$ & $1$ & $83.8916$ & $\hm 74.0220$  & $8.6118\cdot 10^{-12}$ & $6.7644\cdot 10^{-12}$ \\
\hline
	\end{tabular}\label{tab:SLeg1}
\end{table}

\subsubsection{RMEP from the multiparameter Sturm-Liouville equations}
We use the two-parameter ODE problem in \cite[Section 6.4]{atmi:2010} to demonstrate the performance of our  method,
RMEPvTSVD.

\begin{example}\label{eg:SL1}
Consider \eqref{eq:S-L} with
$$
p_1=p_2=q_2\equiv 1, q_1\equiv -1, f_1=f_2= 0, \mbox{ in } [0,1].
$$
It is known \cite[section 6.4]{atmi:2010}  that,
the continuous problem \eqref{eq:S-L} has
the eigenvalue tuples $(\lambda,\mu)$ in the closed forms:
\begin{equation}\label{eq:SL1-eigenvalues}
 \lambda=\lambda(i,j):=\frac12(i^2+j^2)\pi^2,~\mu=\mu(i,j):=\frac12(j^2-i^2)\pi^2,~i,j=1,2,\dots,
\end{equation}
and, the corresponding eigenfunctions are
 $
 u_1(s)=\sin(i\pi s)$ and $u_2(t)=\sin(j\pi t).
 $
 \end{example}

Taking $n_1=n_2=50$ in \eqref{eq:approximation}, we apply  RMEPvTSVD (\Cref{alg:RMEP-approx:ell=N}) to solve
the resulting  RMEP \eqref{eq:SLRMEP} 
and report $10$ eigen-tuples with the smallest normalized residuals.
Table~\ref{tab:SLeg1} compares the compute eigenvalues against the exact ones in \eqref{eq:SL1-eigenvalues} for the continuous
problem. It shows that the compute eigenvalues are highly accurate.
%
To evaluate the accuracy in each computed eigen-tuple
$(\tilde\lambda,\tilde\mu,\tilde\bc_1,\tilde\bc_2)$ as a whole, we plot, in the top row of Figure~\ref{fig:eigennorm}, the normalized residuals
$\rho_i(\tilde\lambda,\tilde\mu,\tilde\bc_i)$ for $i=1,2$ and $\rho=\rho_1+\rho_2$ as defined in \eqref{eq:NRes-ind}
and the corresponding $\varsigma_i(\tilde\lambda,\tilde\mu, \tilde u_i)$   for $i=1,2$
       and $\varsigma=\varsigma_1+\varsigma_2$ as defined in \eqref{eq:integerr}.
In the bottom row of the same figure, we plot five computed
eigenfunction tuples $(\tilde u_1,\tilde u_2)$.

\begin{figure}[H]
	\centering
\begin{tabular}{cc}
\includegraphics[width= 2.8in, height=2.0in]{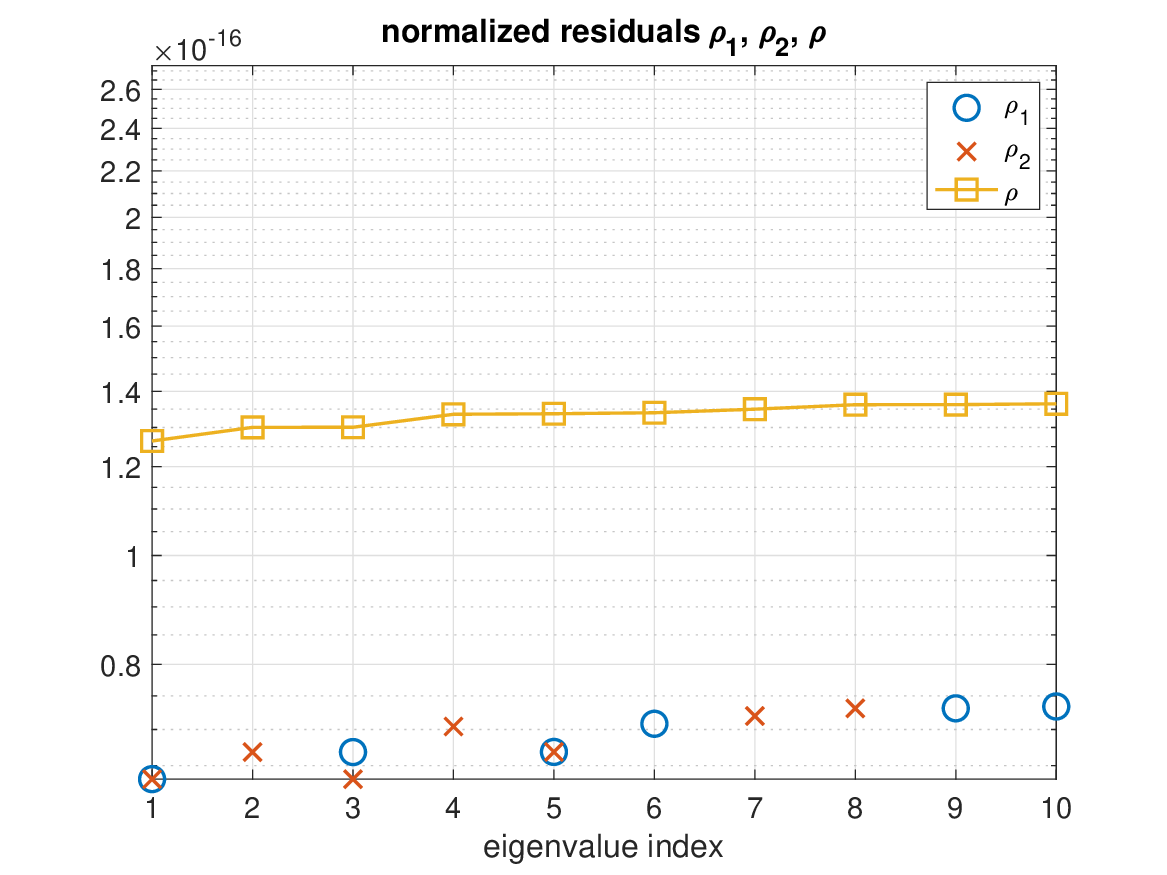} &
 \includegraphics[width= 2.8in, height=2.0in]{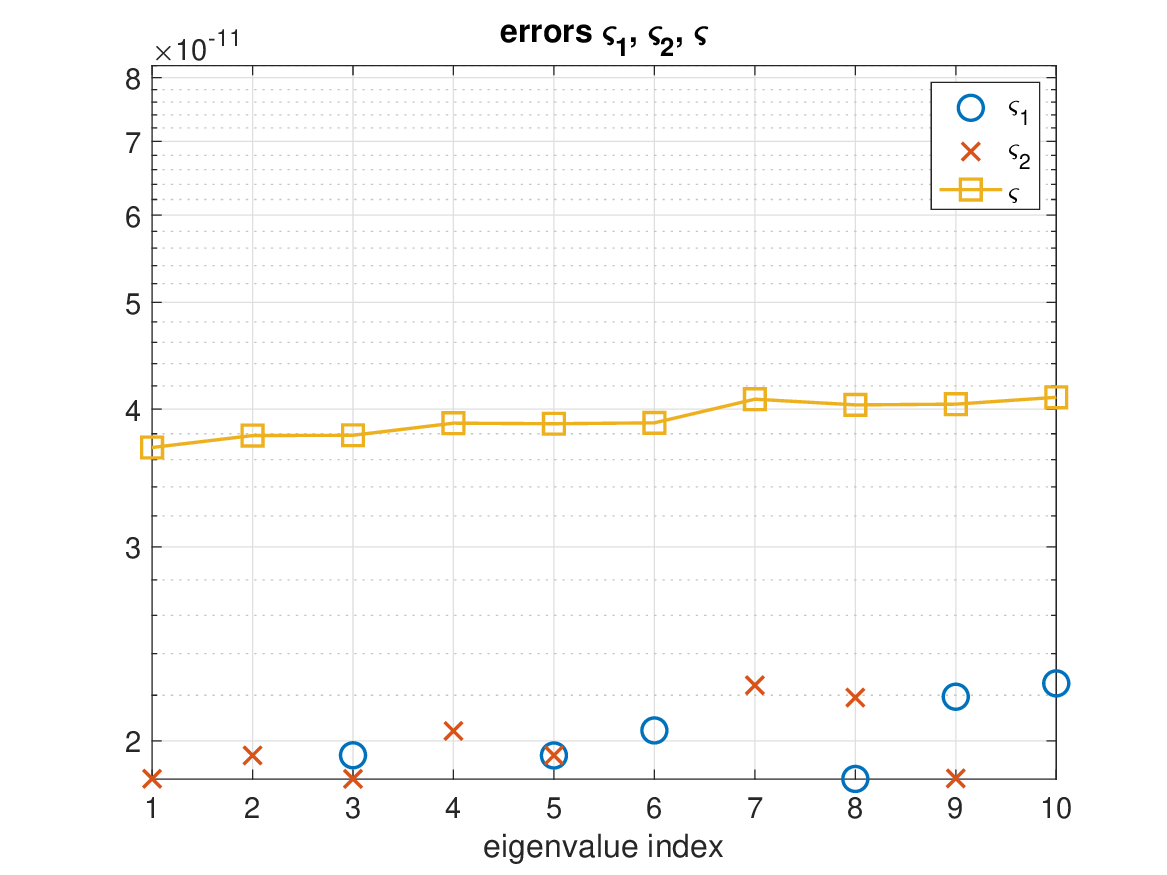} \\
\includegraphics[width= 2.8in, height=2.0in]{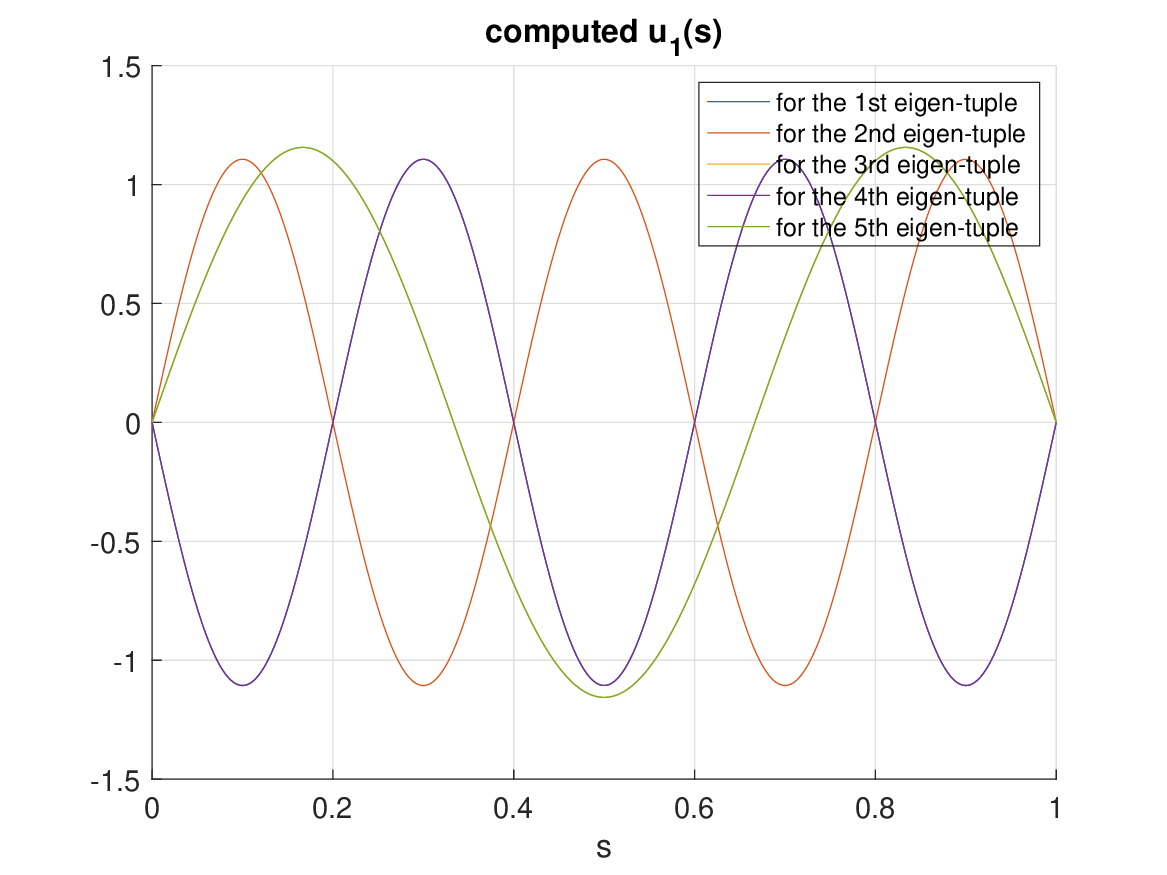} &
 \includegraphics[width= 2.8in, height=2.0in]{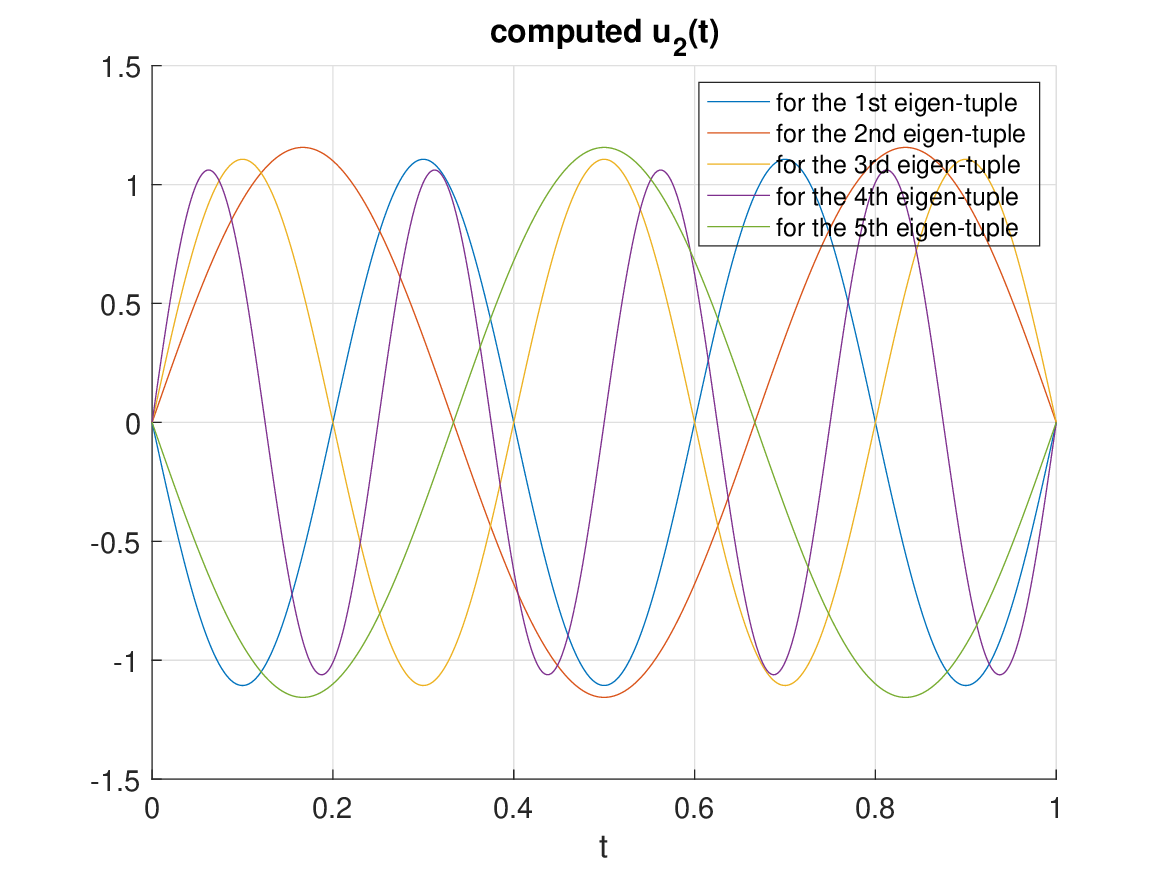}
\end{tabular}
\caption{\small Example \ref{eg:SL1}:
       {\em top left} -- normalized residuals $\rho_i=\rho_i(\tilde\lambda,\tilde\mu,\tilde\bc_i)$   for $i=1,2$
       and $\rho=\rho_1+\rho_2$
       as defined in \eqref{eq:NRes-ind}  for 10 computed eigen-tuples;
       {\em top right} -- corresponding $\varsigma_i=\varsigma_i(\tilde\lambda,\tilde\mu, \tilde u_i)$  for $i=1,2$
       and $\varsigma=\varsigma_1+\varsigma_2$ as defined in \eqref{eq:integerr}
       for each computed eigen-tuple for \eqref{eq:S-L};
       {\em bottom left} --  5 computed eigenfunctions $\tilde u_1$;
       {\em bottom right} -- corresponding 5 computed eigenfunctions $\tilde u_2$.}
	\label{fig:eigennorm}
\end{figure}



\subsubsection{RMEP from the Helmholtz equation}\label{ssubsec:Helms}
\begin{figure}[t]
	\centering
\begin{tabular}{cc}
\includegraphics[width= 2.8in, height=2.0in]{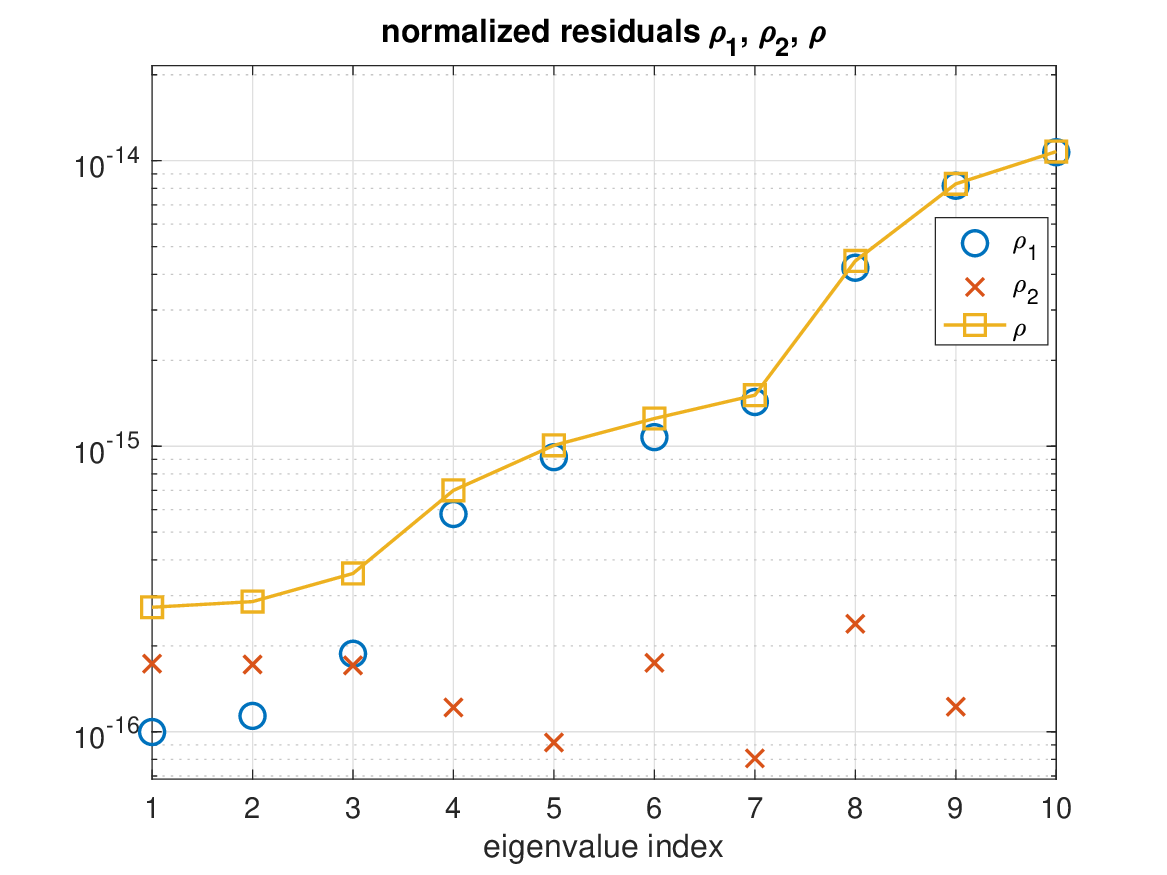} &
 \includegraphics[width= 2.8in, height=2.0in]{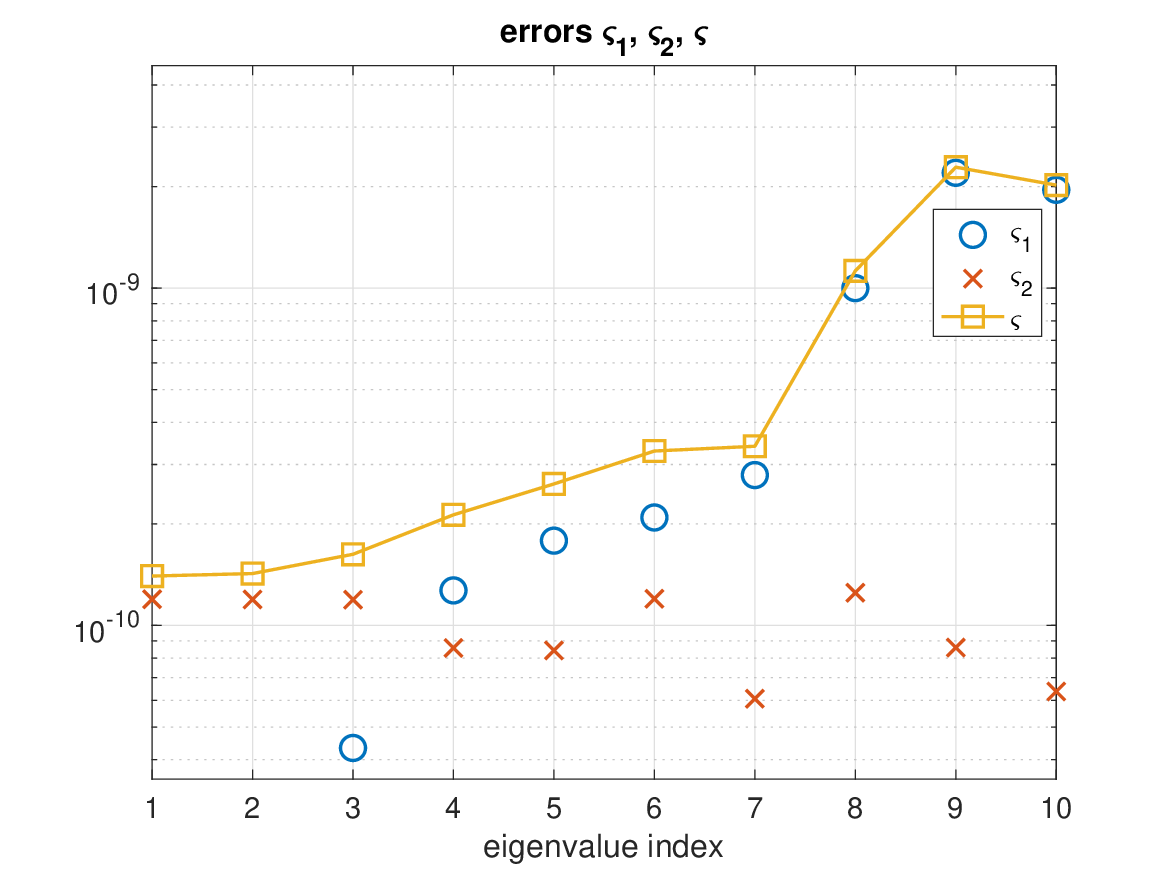} \\
\includegraphics[width= 2.8in, height=2.0in]{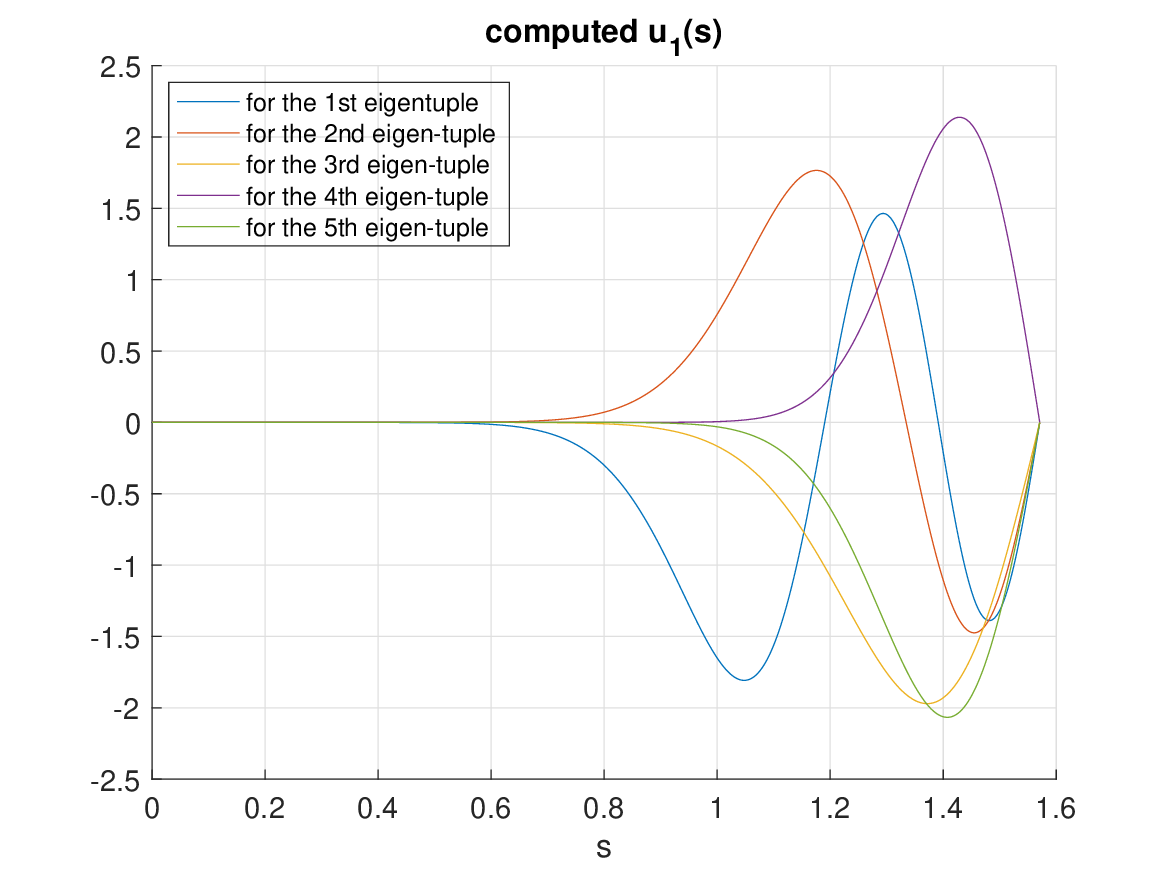} &
 \includegraphics[width= 2.8in, height=2.0in]{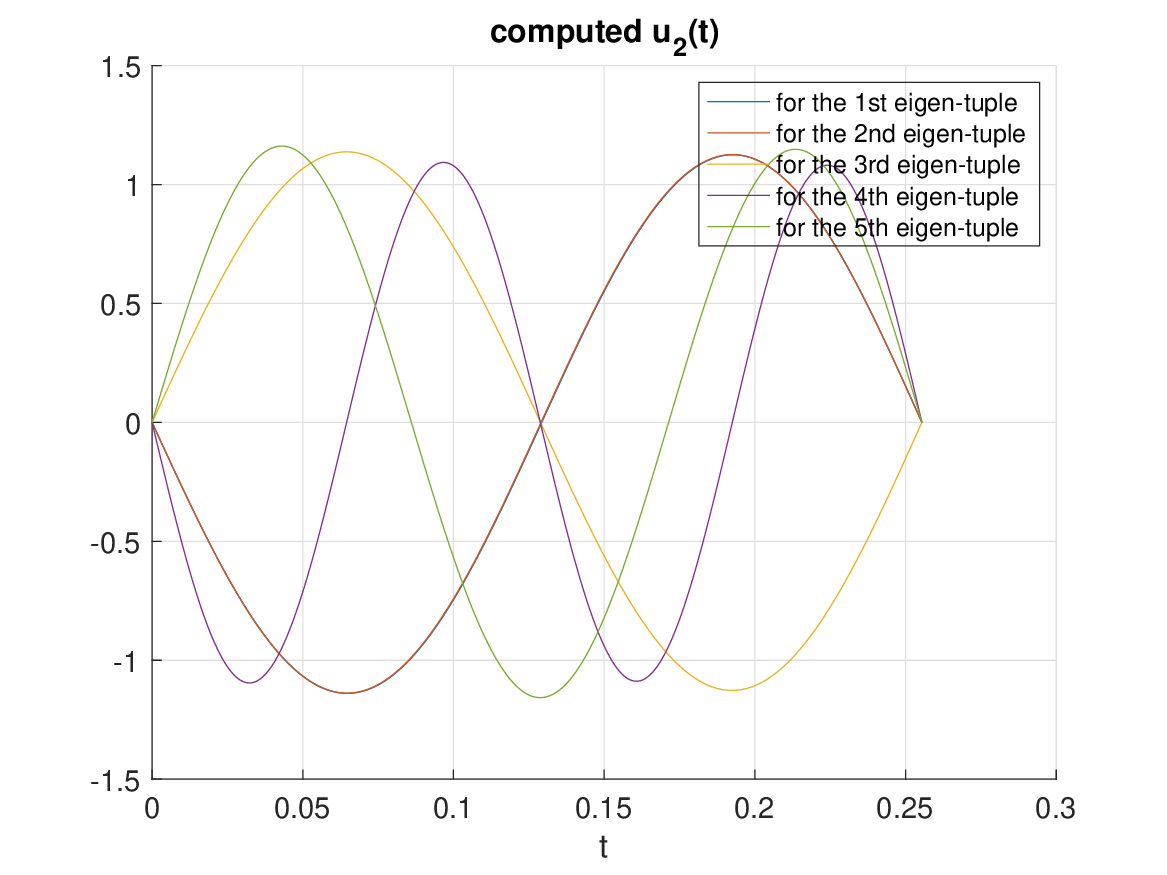}
\end{tabular}
\caption{\small Example \ref{eg:Mathieu3} with $\alpha=4$ and $\beta=1$:
       {\em top left} -- normalized residuals $\rho_i=\rho_i(\tilde\lambda,\tilde\mu,\tilde\bc_i)$ for $i=1,2$
       as defined in \eqref{eq:NRes-ind} and $\rho=\rho_1+\rho_2$;
       {\em top right} -- corresponding $\varsigma_i=\varsigma_i(\tilde\lambda,\tilde\mu, \tilde u_i)$  for $i=1,2$
       and $\varsigma=\varsigma_1+\varsigma_2$ as defined in \eqref{eq:integerr}
       for each computed eigen-tuple for \eqref{eq:S-L};
       {\em bottom left} --  5 computed eigenfunctions $\tilde u_1$;
       {\em bottom right} -- corresponding 5 computed eigenfunctions $\tilde u_2$.
       }
	\label{fig:Helmeigennorm}
\end{figure}

\begin{example}\label{eg:Mathieu3}
Consider the Helmholtz equation  of a vibrating elliptic membrance \cite{amls:2014,eina:2022,ghhp:2012,volk:1988}
\begin{equation}\label{eq:Helm1}
	\begin{aligned}
			\Delta \psi+\omega^2 \psi & = 0 \quad \text{in } \Omega=\left\{(x,y):\frac{x^2}{\alpha^2}+ \frac{y^2}{\beta^2} <1\right\}\subset \bbR^2,\\
		\psi& = 0 \quad \text{on } \partial\Omega.
	\end{aligned}
\end{equation}
A common technique to solve \eqref{eq:Helm1} is through using
the elliptical coordinates $s$ (angular) and $t$ (radial):
$$
x:=h \cosh(s) \cos(t),~y:=h \sinh(s) \sin(t),
$$
 and then separation of variables: $\psi(x,y)=u_1(s)u_2(t)$,
where $h=\sqrt{\alpha^2-\beta^2}$ and $\xi_0=\arccosh(\alpha/h)$,
to yield Mathieu's system \cite{ghhp:2012,volk:1988}:
\begin{subequations}\label{eq:Mathieu3}
\begin{alignat}{3}
u_1^{\prime\prime}(s)+(\lambda-2\mu \cos(2s))u_1(s)&=0, &&~u_1(0)=u_1\left( {\pi}/{2}\right)=0,&&~s\in (0,\pi/2),\\
u_2^{\prime\prime}(t)-(\lambda-2\mu \cosh(2t))u_2(t)&=0,&&~u_2(0)=u_2(\xi_0)=0,&&~t\in (0,\xi_0),
\end{alignat}
\end{subequations}
where $\mu=h^2\omega^2/4$ and  $\lambda$ is the parameter arising from separation of variables.
This is the $\pi$-odd problem in \cite{ghhp:2012} and falls into the form \eqref{eq:S-L}.
A solution $(\lambda, \mu,u_1(t),u_2(t))$ to \eqref{eq:Mathieu3} provides an eigenmode $\psi(x,y)$ for \eqref{eq:Helm1}.
Let $\alpha=4$ and $\beta=1$ in \eqref{eq:Helm1} and take $n_1=n_2=50$ in \eqref{eq:approximation}. We then use  RMEPvTSVD
(Algorithm~\ref{alg:RMEP-approx:ell=N}) to solve  the resulting RMEP \eqref{eq:SLRMEP}.
\end{example}

In the top row of Figure~\ref{fig:Helmeigennorm}, we plot the normalized residuals
$\rho_i(\tilde\lambda,\tilde\mu,\tilde\bc_i)$ for $i=1,2$ and $\rho=\rho_1+\rho_2$ as defined in \eqref{eq:NRes-ind}
and the corresponding $\varsigma_1(\tilde\lambda,\tilde\mu, \tilde u_i)$
for $i=1,2$ and $\varsigma=\varsigma_1+\varsigma_2$ as defined in \eqref{eq:integerr}.
In the bottom row of the same figure, we plot five computed
eigenfunction tuples $(\tilde u_1,\tilde u_2)$.

Finally, eight eigenmodes $\psi(x,y)$ and their related eigenfrequencies $\omega$ are plotted in Figure~\ref{fig:HelmsolA}.
These results demonstrate that a combination of the least-squares spectral method, RMEPvTSVD, \texttt{Chebfun} and \texttt{MultiParEig}
constitutes a flexible, robust and efficient solver for \eqref{eq:Helm1}.

\section{Conclusions}\label{sec:conclusion}
Minimal perturbation formulations are proposed to overcome the potential issue of solution nonexistence
for the general rectangular multiparameter eigenvalue problem (RMEP), whose coefficient matrices usually have more rows than columns,
so that approximate eigen-tuples can be well defined and computed.
Specifically, we investigate  efficient computation of one approximate eigen-tuple or a complete set of approximate eigen-tuples.
For one eigen-tuple, we design an alternating iterative scheme with  guaranteed convergence.
Concerning the complete set of approximate eigen-tuples, our minimal perturbation formulation
can lead to a standard multiparameter eigenvalue problem (MEP) via truncated SVDs and the MEP can then be either solved by
existing MEP solvers from \texttt{MultiParEig} or
converted to
several related generalized eigenvalue problems (GEPs) that can be solved by any existing GEP solvers.
We apply the method to solve RMEPs arising from discretizing multiparameter ODE eigenvalue problems by
the least-squares spectral method with the help of \texttt{Chebfun} \cite{dris:2010,hant:2022,hana:2022}.
Our numerical results well demonstrate the method's flexibility and robustness.

\begin{figure}[H]
	\centering
\begin{tabular}{cc}
\includegraphics[width= 2.9in, height=3.1in]{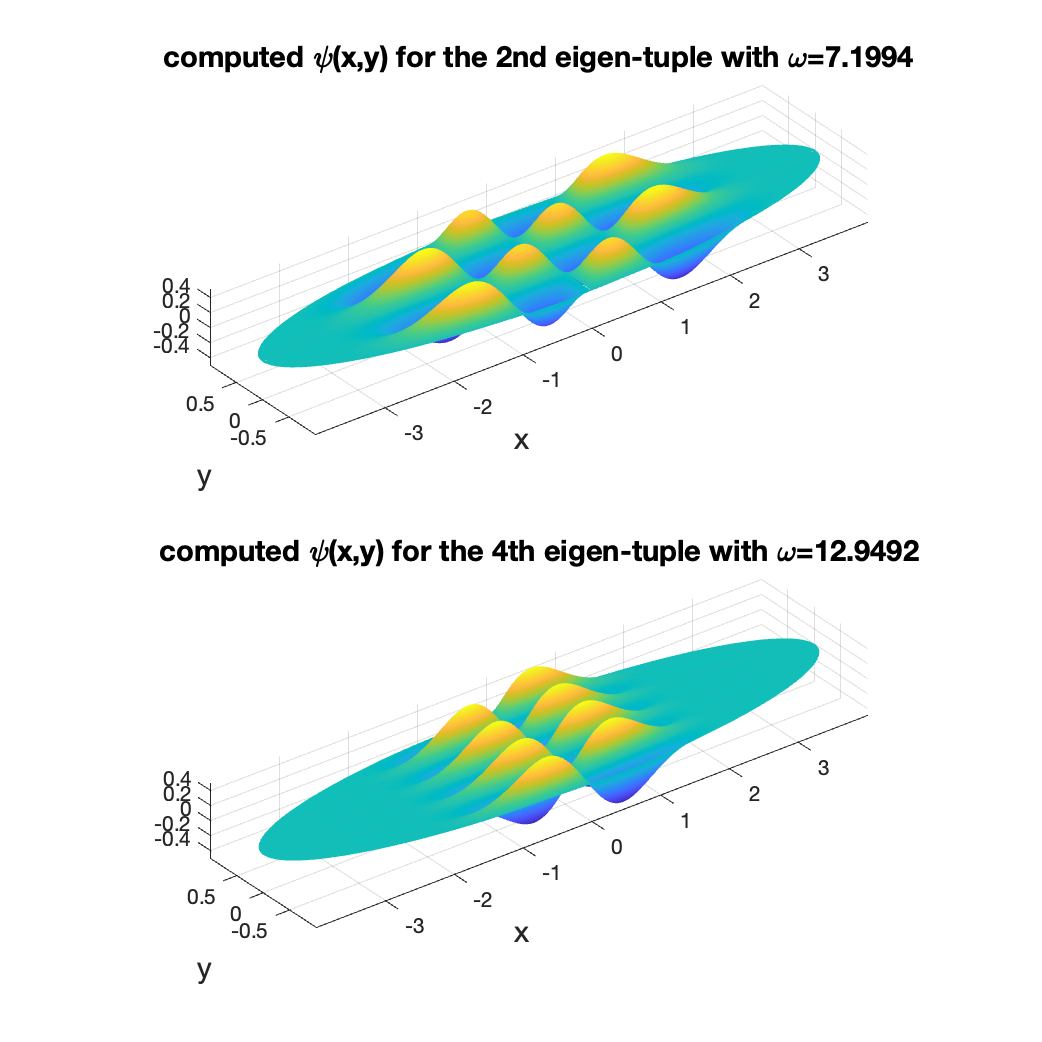}
         &  \includegraphics[width= 2.9in, height=3.1in]{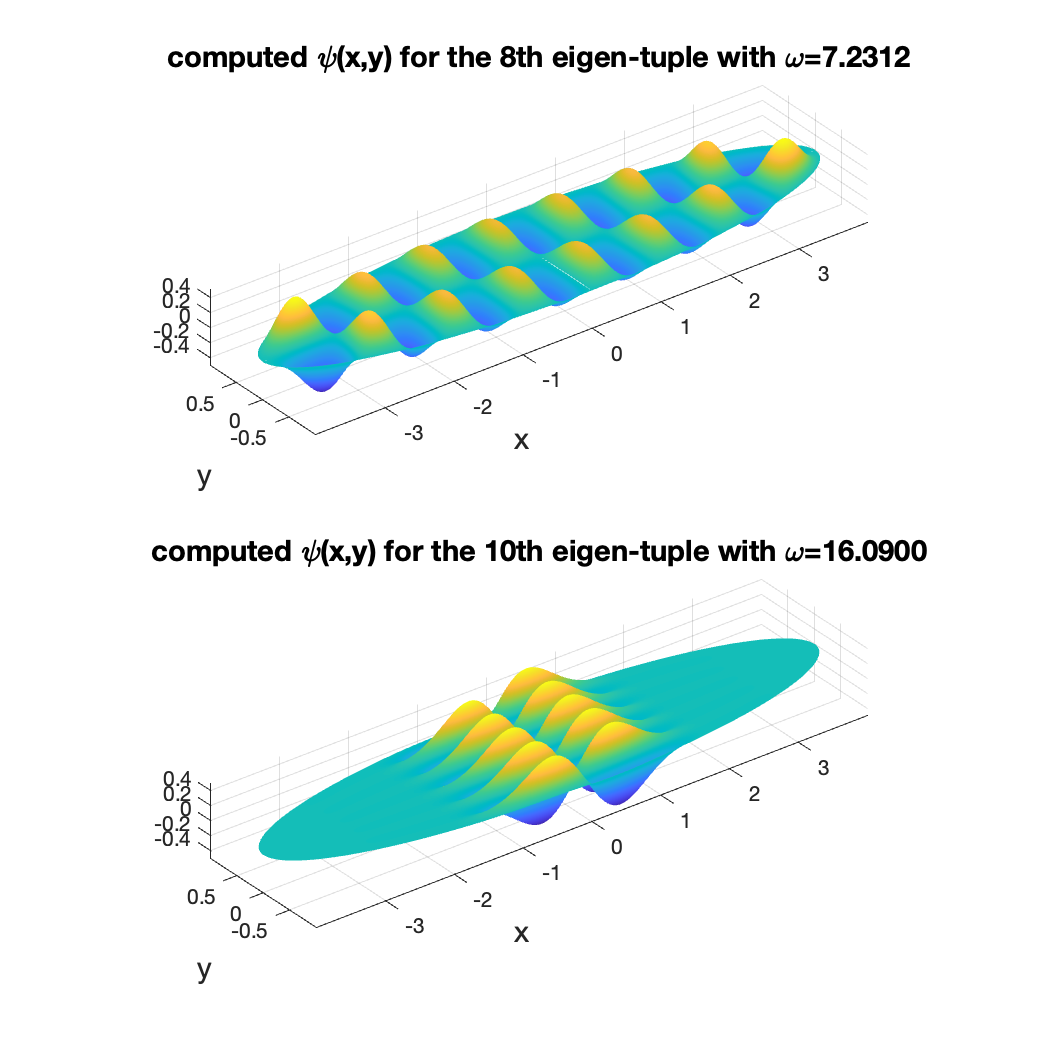} \\
\includegraphics[width= 2.9in, height=3.1in]{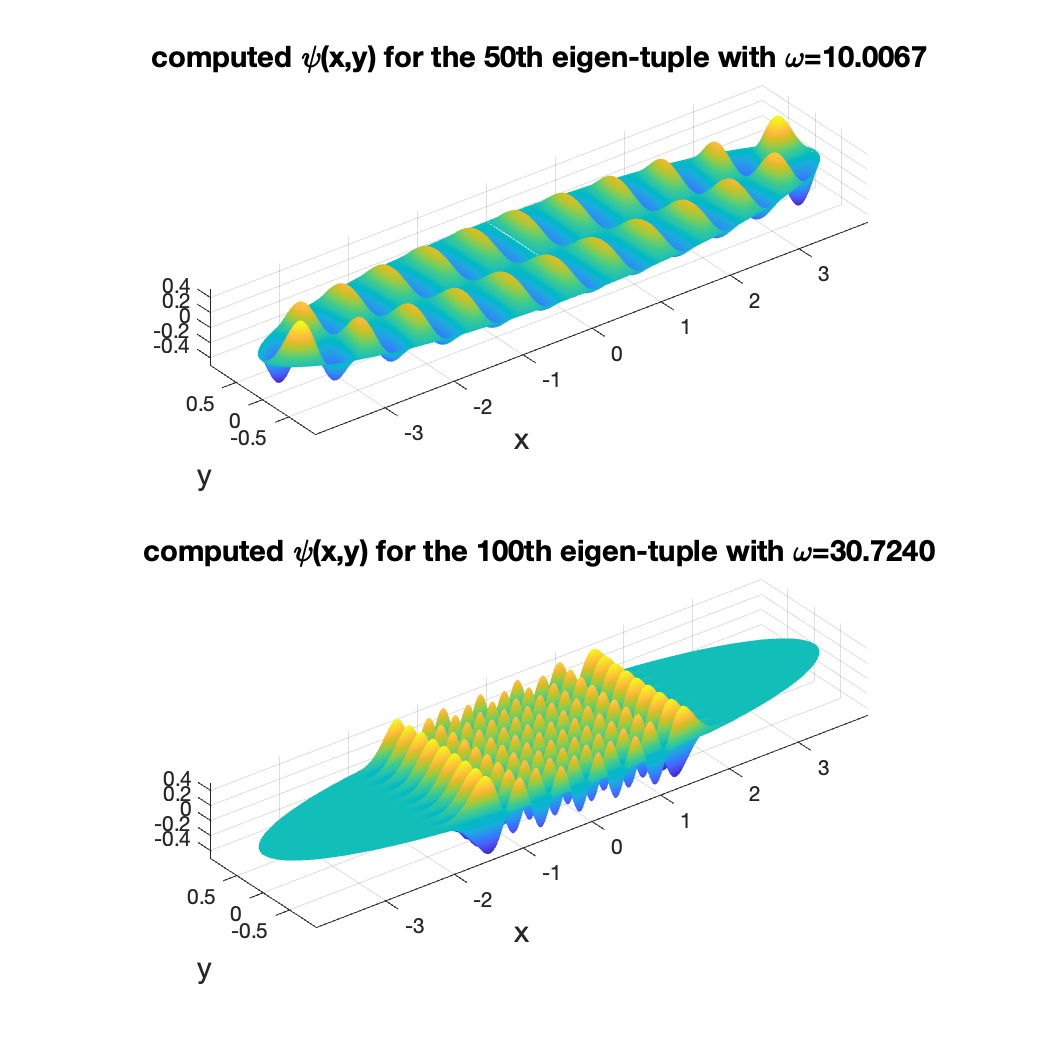}
         &  \includegraphics[width= 2.9in, height=3.1in]{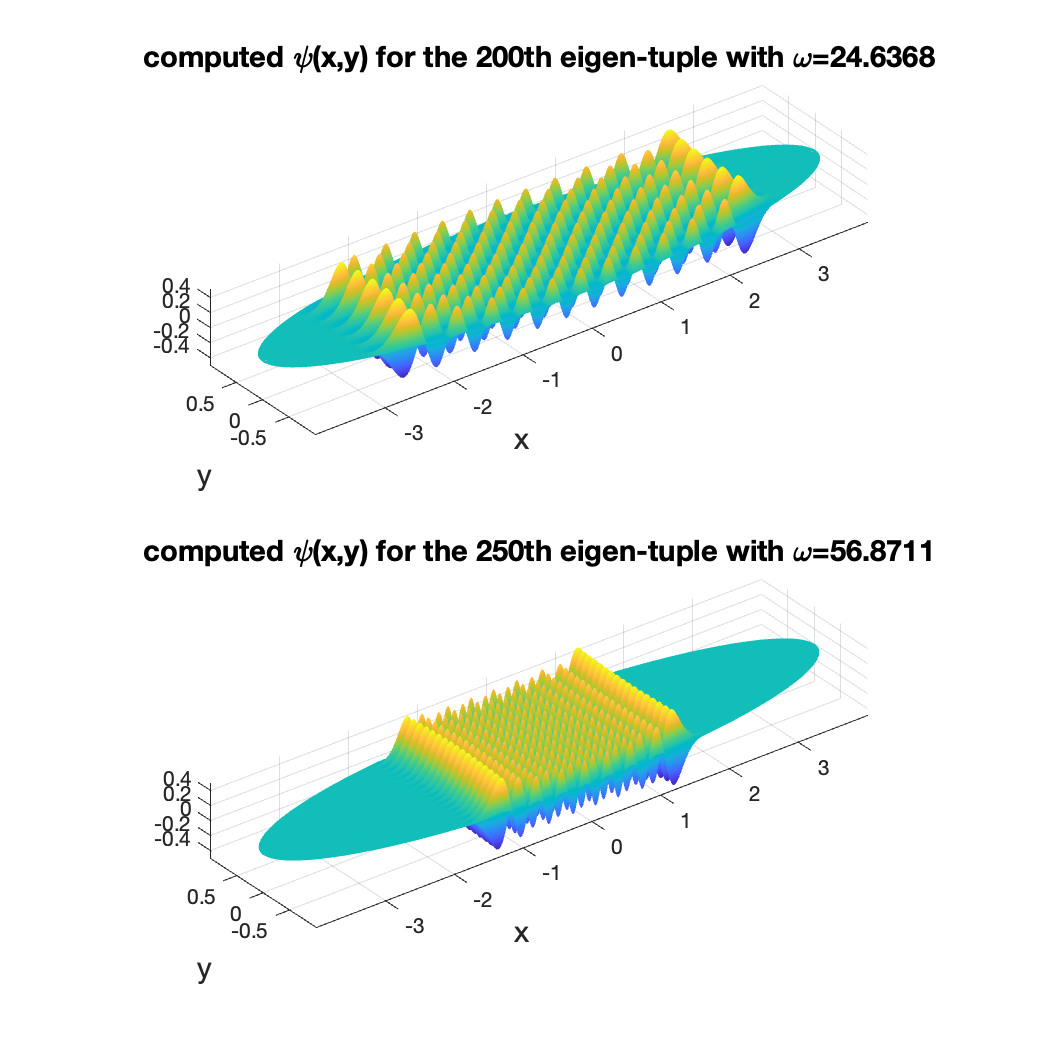}
\end{tabular}
	\caption{\small Eight computed eigenmodes $\psi(x,y)$ of \eqref{eq:Helm1} from Example \ref{eg:Mathieu3}  with $\alpha=4$ and $\beta=1$.}
	\label{fig:HelmsolA}
\end{figure}

\section*{Acknowledgments}
The authors would like to thank Michiel Hochstenbach for stimulating and useful discussions.

\appendix

\section{Solve MEP by GEPs}\label{sec:MEPvGEP}
MEP~\eqref{eq:RMEP} can be turned into GEPs \cite{atki:1968,atki:1972}.
Let
$$
\Delta_0=\left|\begin{array}{ccc}B_{11} & \dots & B_{1k} \\\vdots &   & \vdots \\B_{k1} & \dots & B_{kk}\end{array}\right|_{\otimes}, ~\Delta_i=\left|\begin{array}{ccccccc}B_{11} & \dots & B_{1~{i-1}} &A_1&B_{1~{i+1}} \dots&B_{1k} \\\vdots &   & \vdots &\vdots &   \vdots  & \vdots \\ B_{k1} & \dots & B_{k~{i-1}} &A_k&B_{k~{i+1}} \dots&B_{kk} \end{array}\right|_{\otimes}\in \bbC^{N\times N},
$$
where $|\cdot|_\otimes$ is the $k\times k$ operator determinants using Kronecker product $\otimes$ instead of multiplication (see e.g., \cite{atki:1968,atki:1972,plgh:2015}), and $N= n_1\cdots  n_k$.
It is known \cite{atki:1968,atki:1972} that if $\Delta_0$ is nonsingular, then  $\Delta_0^{-1} \Delta_i$   for $i=1,\dots,k$
commute; moreover, any eigen-tuple $((\lambda_1,\ldots,\lambda_k),(\bx_1,\ldots,\bx_k))$ of MEP~\eqref{eq:RMEP}
yields a solution to each of the following $k$ GEPs:
\begin{equation}\label{eq:MEP-GEP}
\Delta_1\bz = \lambda_1 \Delta_0 \bz,\quad
\Delta_2\bz = \lambda_1 \Delta_0 \bz,\quad
\dots,\quad
\Delta_k \bz = \lambda_k \Delta_0 \bz,
\end{equation}
with $\bz=\bx_1\otimes \dots \otimes \bx_k\ne 0$.
This provides the foundation for solving MEP~\eqref{eq:RMEP} via GEP solvers \cite{bibr:1989,doyy:2016,eina:2022,hokp:2004,hopl:2002,jixi:1991,sito:1986}.

We say that MEP~\eqref{eq:RMEP} is {\em regular} (see \cite[Theorem 6.6.1]{atki:1972}), if
$$
\det(w_1 \Delta_0+w_2\Delta_1+\dots+w_{k+1}\Delta_k)\not\equiv 0
\quad\mbox{for $[w_1,w_2,\dots,w_{k+1}]^{\T}\in\bbC^{k+1}$}.
$$
It can happen that the MEP is regular, even though $\Delta_0$ is singular.
 Suppose that MEP~\eqref{eq:RMEP} is regular and let
$\Delta=w_1 \Delta_0+w_2\Delta_1+\dots+w_{k+1}\Delta_k$ be nonsingular for some $[w_1,\dots,w_{k+1}]^{\T}$.
Then $\Delta^{-1} \Delta_0$, $\dots$, $\Delta^{-1} \Delta_{k+1}$ commute \cite[Theorem 6.7.2]{atki:1972}, and
$\lambda_1,\dots,\lambda_k$ expressed as pairs $(\alpha_1,\gamma),\dots,(\alpha_k,\gamma)$ in the sense of
\eqref{eq:(lambda,mu):home} can be recovered from GEPs \cite[Chapter 6]{atki:1972}
\begin{equation}\label{eq:MEP-GEP'}
\Delta_0\bz = \gamma \Delta \bz,~
\Delta_1\bz = \alpha_1 \Delta \bz,~ \dots,~
\Delta_k\bz = \alpha_k \Delta \bz,~~\bz\ne 0.
\end{equation}
It is tempting to simplify these GEPs into:
$\gamma\Delta_1\bz = \alpha_1 \Delta_0\bz,\dots,\gamma\Delta_k \bz = \alpha_k \Delta_0 \bz$
in the homogenous form. This will work if  $(\Delta_i, \Delta_0)$  for $i=1,\dots,k$  are regular matrix pairs, but it is not clear if these matrix pairs are regular, given the regularity of the MEP.
Some theoretical properties and numerical methods have also been developed for singular $\Delta_0$
\cite{kopl:2022,mupl:2009} for $k=2$.

While the idea of solving MEP by GEPs as explained looks alright in theory, it suffers from a major drawback in computation:
GEPs in \eqref{eq:MEP-GEP} and \eqref{eq:MEP-GEP'} are usually large scale even for a very small scale MEP, e.g.,
for $m_i=n_i=100$ and $k=2$, the involved GEPs have dimension of $10^4$. Hence often the GEPs have to be solved iteratively (unless $n_i$ in the 10s or smaller). In that regard, iterative eigensolvers for large scale GEPs \cite{bddrv:2000}
can be adapted, taking advantage of the structures in $\Delta_i$ for fast matrix-vector multiplications.
Already in \cite{hokp:2004,hopl:2002}, the Jacobi-Davidson type method has been extended for a large-scale  MEP.

\def\noopsort#1{}\def\l{\char32l}\def\v#1{{\accent20 #1}}
  \let\^^_=\v\def\hbk{hardback}\def\pbk{paperback}
\providecommand{\href}[2]{#2}
\providecommand{\arxiv}[1]{\href{http://arxiv.org/abs/#1}{arXiv:#1}}
\providecommand{\url}[1]{\texttt{#1}}
\providecommand{\urlprefix}{URL }

%

\begin{thebibliography}{10}
{\small 
\bibitem{amls:2014}
\newblock P.~Amodio, T.~Levitina, G.~Settanni and E.~Weinm{\"u}ller,
\newblock Numerical simulation of the whispering gallery modes in prolate
  spheroids,
\newblock \emph{Comput. Phys. Commun.}, \textbf{185} (2014), 1200--1206.

\bibitem{atki:1968}
\newblock F.~V. Atkinson,
\newblock Multiparameter spectral theory,
\newblock \emph{Bull. Amer. Math. Soc.}, \textbf{74} (1968), 1--27.

\bibitem{atki:1972}
\newblock F.~V. Atkinson,
\newblock \emph{Multiparameter Eigenvalue Problems, Vollum I: Matrices and
  Compact Operators},
\newblock Acad. Press, New York, 1972.

\bibitem{atmi:2010}
\newblock F.~V. Atkinson and A.~B. Mingarelli,
\newblock \emph{Multiparameter Eigenvalue Problems: {Sturm-Liouville} Theory},
\newblock CRC Press, Boca Raton, 2010.

\bibitem{demo:2020}
\newblock D.~M. B.,
\newblock Least squares optimal realisation of autonomous {LTI} systems is an
  eigenvalue problem,
\newblock \emph{Commun. Inf. Syst.}, \textbf{20} (2020), 163--207.

\bibitem{bddrv:2000}
\newblock Z.~Bai, J.~W. Demmel, J.~Dongarra, A.~Ruhe and H.~van~der
  Vorst~{(editors)},
\newblock \emph{Templates for the Solution of Algebraic Eigenvalue Problems: A
  Practical Guide},
\newblock SIAM, Philadelphia, 2000.

\bibitem{bhli:1996}
\newblock R.~Bhatia and R.-C. Li,
\newblock On perturbations of matrix pencils with real spectra. {II},
\newblock \emph{Math. Comp.}, \textbf{65} (1996), 637--645.

\bibitem{bibr:1989}
\newblock P.~Binding and P.~J. Browne,
\newblock Two parameter eigenvalue problems for matrices,
\newblock \emph{Linear Algebra Appl.}, \textbf{113} (1989), 139--157.

\bibitem{blum:1978}
\newblock E.~K. Blum and A.~R. Curtis,
\newblock A convergent gradient method for matrix eigenvector-eigentuple
  problems,
\newblock \emph{Numer. Math.}, \textbf{31} (1978), 247--263.

\bibitem{blum:1978b}
\newblock E.~K. Blum and P.~B. Geltner,
\newblock Numerical solution of eigentuple-eigenvector problems in {H}ilbert
  spaces by a gradient method,
\newblock \emph{Numer. Math.}, \textbf{31} (1978), 231--246.

\bibitem{bole:1990}
\newblock D.~Boley,
\newblock Estimating the sensitivity of the algebraic structure of pencils with
  simple eigenvalue estimates,
\newblock \emph{SIAM J. Matrix Anal. Appl.}, \textbf{11} (1990), 632--643.

\bibitem{boeg:2005}
\newblock G.~Boutry, M.~Elad, G.~H. Golub and P.~Milanfar,
\newblock The generalized eigenvalue problem for nonsquare pencils using a
  minimal perturbation approach,
\newblock \emph{SIAM J. Matrix Anal. Appl.}, \textbf{27} (2005), 582--601.

\bibitem{chgo:2006}
\newblock D.~Chu and G.~H. Golub,
\newblock On a generalized eigenvalue problem for nonsquare pencils,
\newblock \emph{SIAM J. Matrix Anal. Appl.}, \textbf{28} (2006), 770--787.

\bibitem{demm:1997}
\newblock J.~W. Demmel,
\newblock \emph{Applied Numerical Linear Algebra},
\newblock SIAM, Philadelphia, PA, 1997.

\bibitem{deka:1987}
\newblock J.~W. Demmel and B.~K{\aa}gstr\"om,
\newblock Computing stable eigendecomposltions of matrix pencils,
\newblock \emph{Linear Algebra Appl.}, \textbf{88} (1987), 139--186.

\bibitem{doyy:2016}
\newblock B.~Dong, B.~Yu and Y.~Yu,
\newblock A homotopy method for finding all solutions of a multiparameter
  eigenvalue problem,
\newblock \emph{SIAM J. Matrix Anal. Appl.}, \textbf{37} (2016), 550--571.

\bibitem{drht:2014}
\newblock T.~A. Driscoll, N.~Hale and L.~N. Trefethen,
\newblock \emph{{Chebfun User's Guide}},
\newblock Pafnuty Publications, Oxford, 2014,
\newblock See also www.chebfun.org.

\bibitem{dris:2010}
\newblock T.~A. Driscoll,
\newblock Automatic spectral collocation for integral, integro-differential,
  and integrally reformulated differential equations,
\newblock \emph{J. Comput. Phys.}, \textbf{229} (2010), 5980--5998.

\bibitem{ecka:1936}
\newblock C.~Eckart and G.~Young,
\newblock The approximation of one matrix by another of lower rank,
\newblock \emph{Psychometrika}, \textbf{1} (1936), 211--218.

\bibitem{edek:1997}
\newblock A.~Edelman, E.~Elmroth and B.~K\r{a}gstr\"{o}m,
\newblock A geometric approach to perturbation theory of matrices and matrix
  pencils. part {I}: {Versal} deformations,
\newblock \emph{SIAM J. Matrix Anal. Appl.}, \textbf{18} (1997), 653--692.

\bibitem{eina:2022}
\newblock H.~Eisenmann and Y.~Nakatsukasa,
\newblock Solving two-parameter eigenvalue problems using an alternating
  method,
\newblock \emph{Linear Algebra Appl.}, \textbf{643} (2022), 137--160.

\bibitem{elmg:2004}
\newblock M.~Elad, P.~Milanfar and G.~Golub,
\newblock Shape from moments - an estimation theory perspective,
\newblock \emph{IEEE Trans. Signal Processing}, \textbf{52} (2004), 1814--1829.

\bibitem{fohm:1972}
\newblock L.~Fox, L.~Hayes and D.~F. Mayers,
\newblock The double eigenvalue problems,
\newblock in \emph{in Topics in Numerical Analysis, Proceedings of the Royal
  Irish Academy Conference on Numerical Analysis} (ed. J.~J.~H. Miller),
\newblock Academic Press, New York, 1972,
\newblock 93--112.

\bibitem{gant:1959I}
\newblock F.~R. Gantmacher,
\newblock \emph{The Theory of Matrices, Vol. I},
\newblock Chelsea Publishing Company, New York, 1959.

\bibitem{gant:1959II}
\newblock F.~R. Gantmacher,
\newblock \emph{The Theory of Matrices, Vol. II},
\newblock Chelsea Publishing Company, New York, 1959.

\bibitem{ghhp:2012}
\newblock C.~Gheorghiu, M.~Hochstenbach, B.~Plestenjak and J.~Rommes,
\newblock Spectral collocation solutions to multiparameter {M}athieu's system,
\newblock \emph{Appl. \& Math. Comput.}, \textbf{218} (2012), 11990--12000.

\bibitem{govl:2013}
\newblock G.~H. Golub and C.~F. {Van Loan},
\newblock \emph{Matrix Computations},
\newblock 4th edition,
\newblock Johns Hopkins University Press, Baltimore, Maryland, 2013.

\bibitem{hant:2022}
\newblock B.~Hashemi, Y.~Nakatsukasa and L.~N. Trefethen,
\newblock Rectangular eigenvalue problems,
\newblock \emph{Adv. Comput. Math.}, \textbf{80} (2022), 1--16.

\bibitem{hana:2022}
\newblock B.~Hashemi and Y.~Nakatsukasa,
\newblock Least-squares spectral methods for {ODE} eigenvalue problems,
\newblock \emph{SIAM J. Sci. Comput.}, \textbf{44} (2022), A3244--A3264.

\bibitem{hokp:2004}
\newblock M.~E. Hochstenbach, T.~Ko\v{s}ir and B.~Plestenjak,
\newblock A {J}acobi-{D}avidson type method for the two-parameter eigenvalue
  problem,
\newblock \emph{SIAM J. Matrix Anal. Appl.}, \textbf{26} (2004), 477--497.

\bibitem{hokp:2024}
\newblock M.~E. Hochstenbach, T.~Ko$\check{s}$ir and B.~Plestenjak,
\newblock Numerical methods for rectangular multiparameter eigenvalue problems,
  with applications to finding optimal {ARMA} and {LTI} models,
\newblock \emph{Numer. Linear Algebra Appl.}, \textbf{31} (2023), e2540.

\bibitem{homp:2012}
\newblock M.~E. Hochstenbach, A.~Muhi{\v c} and B.~Plestenjak,
\newblock On linearizations of the quadratic two-parameter eigenvalue problem,
\newblock \emph{Linear Algebra Appl.}, \textbf{436} (2012), 2725--2743.

\bibitem{hopl:2002}
\newblock M.~E. Hochstenbach and B.~Plestenjak,
\newblock A {J}acobi--{D}avidson type method for a right definite two-parameter
  eigenvalue problem,
\newblock \emph{SIAM J. Matrix Anal. Appl.}, \textbf{24} (2002), 392--410.

\bibitem{itmu:2016}
\newblock S.~Ito and K.~Murota,
\newblock An algorithm for the generalized eigenvalue problem for nonsquare
  matrix pencils by minimal perturbation approach,
\newblock \emph{SIAM J. Matrix Anal. Appl.}, \textbf{37} (2016), 409--419.

\bibitem{jaho:2009}
\newblock E.~Jarlebring and M.~E. Hochstenbach,
\newblock Polynomial two-parameter eigenvalue problems and matrix pencil
  methods for stability of delay-differential equations,
\newblock \emph{Linear Algebra Appl.}, \textbf{431} (2009), 369--380,
\newblock Special Issue in honor of Henk van der Vorst.

\bibitem{jixi:1991}
\newblock X.~Ji,
\newblock An iterative method for the numerical solution of two-parameter
  eigenvalue problems,,
\newblock \emph{Int. J. Comput. Math.}, \textbf{41} (1991), 91--98.

\bibitem{khaz:1998}
\newblock V.~B. Khazanov,
\newblock On spectral properties of multiparameter polynomial matrices,
\newblock \emph{Journal of Mathematical Sciences}, \textbf{89} (1998),
  1775--1800.

\bibitem{kopl:2022}
\newblock T.~Ko{\v s}ir and B.~Plestenjak,
\newblock On the singular two-parameter eigenvalue problem {II},
\newblock \emph{Linear Algebra Appl.}, \textbf{649} (2022), 433--451.

\bibitem{kosi:1994}
\newblock T.~Ko\v{s}ir,
\newblock Finite-dimensional multiparameter spectral theory: The nonderogatory
  case,
\newblock \emph{Linear Algebra Appl.}, \textbf{212-213} (1994), 45--70.

\bibitem{legc:2008}
\newblock P.~Lecumberri, M.~G\'{o}mez and A.~Carlosena,
\newblock Generalized eigenvalues of nonsquare pencils with structure,
\newblock \emph{SIAM J. Matrix Anal. Appl.}, \textbf{30} (2008), 41--55.

\bibitem{lild:2021}
\newblock J.-f. Li, W.~Li, X.-f. Duan and M.~Xiao,
\newblock Newton's method for the parameterized generalized eigenvalue problem
  with nonsquare matrix pencils,
\newblock \emph{Adv. in Comput. Math.}, \textbf{47} (2021), 29.

\bibitem{lilv:2020}
\newblock J.-f. Li, W.~Li, S.-W. Vong, Q.-L. Luo and M.~Xiao,
\newblock A {R}iemannian optimization approach for solving the generalized
  eigenvalue problem for nonsquare matrix pencils,
\newblock \emph{J. Sci. Comput.}, \textbf{82} (2020), 67.

\bibitem{li:1994a}
\newblock R.-C. Li,
\newblock On perturbations of matrix pencils with real spectra,
\newblock \emph{Math. Comp.}, \textbf{62} (1994), 231--265.

\bibitem{li:2003}
\newblock R.-C. Li,
\newblock On perturbations of matrix pencils with real spectra, a revisit,
\newblock \emph{Math. Comp.}, \textbf{72} (2003), 715--728.

\bibitem{li:2014HLA}
\newblock R.-C. Li,
\newblock Matrix perturbation theory,
\newblock in \emph{Handbook of Linear Algebra} (eds. L.~Hogben, R.~Brualdi and
  G.~W. Stewart), 2nd edition,
\newblock CRC Press, Boca Raton, FL, 2014,
\newblock Chapter 21.

\bibitem{maoc:1991}
\newblock E.~Marchi, J.~A. Oviedo and J.~E. Cohen,
\newblock Perturbation theory of a nonlinear game of von {Neumann},
\newblock \emph{SIAM J. Matrix Anal. Appl.}, \textbf{12} (1991), 592--596.

\bibitem{mirs:1961}
\newblock L.~Mirsky,
\newblock Symmetric gauge functions and unitarily invariant norms,
\newblock \emph{Q. J. Math.}, \textbf{11} (1961), 50--59.

\bibitem{mupl:2009}
\newblock A.~Muhi$\check{c}$ and B.~Plestenjak,
\newblock On the singular two-parameter eigenvalue problem,
\newblock \emph{Electron. J. Lin. Alg.}, \textbf{18} (2009), 420--437.

\bibitem{ples:2021}
\newblock B.~Plestenjak,
\newblock Multipareig, 2021,
\newblock
 \urlprefix\url{https://www.mathworks.com/matlabcentral/\\fileexchange/47844-multipareig}.

\bibitem{plgh:2015}
\newblock B.~Plestenjak, C.~I. Gheorghiu and M.~E. Hochstenbach,
\newblock Spectral collocation for multiparameter eigenvalue problems arising
  from separable boundary value problems,
\newblock \emph{J. Comput. Phys.}, \textbf{298} (2015), 585--601.

\bibitem{sclv:2006}
\newblock M.~Schuermans, P.~Lemmerling, L.~{De Lathauwer} and S.~{Van Huffel},
\newblock The use of total least squares data fitting in the shape-from-moments
  problem,
\newblock \emph{Signal Process.}, \textbf{86} (2006), 1109--1115.

\bibitem{shsh:2009}
\newblock B.~Shapiro and M.~Shapiro,
\newblock On eigenvalues of rectangular matrices,
\newblock \emph{Proc. Steklov Inst. Math.}, \textbf{267} (2009), 248--255.

\bibitem{slee:2008}
\newblock B.~Sleeman,
\newblock Multiparameter spectral theory and separation of variables,
\newblock \emph{J. Phys. A: Math. Theor.}, \textbf{41} (2008), 1--20.

\bibitem{sito:1986}
\newblock T.~Slivnik and G.~Tom$\check{s}$i$\check{c}$,
\newblock A numerical method for the solution of two-parameter eigenvalue
  problems,
\newblock \emph{J. Comput. Appl. Math.}, \textbf{15} (1986), 109--115.

\bibitem{stsu:1990}
\newblock G.~W. Stewart and J.-G. Sun,
\newblock \emph{Matrix Perturbation Theory},
\newblock Academic Press, Boston, 1990.

\bibitem{stew:1994}
\newblock G.~Stewart,
\newblock Perturbation theory for rectangular matrix pencils,
\newblock \emph{Linear Algebra Appl.}, \textbf{208-209} (1994), 297--301.

\bibitem{vemo:2022}
\newblock C.~Vermeersch and B.~{De Moor},
\newblock Two complementary block {M}acaulay matrix algorithms to solve
  multiparameter eigenvalue problems,
\newblock \emph{Linear Algebra Appl.}, \textbf{654} (2022), 177--209.

\bibitem{vemo:2023}
\newblock C.~Vermeersch and B.~De~Moor,
\newblock Recursive algorithms to update a numerical basis matrix of the null
  space of the block row, (banded) block {T}oeplitz, and block {M}acaulay
  matrix,
\newblock \emph{SIAM J. Sci. Comput.}, \textbf{45} (2023), A596--A620.

\bibitem{vemo:2019}
\newblock C.~Vermeersch and B.~D. Moor,
\newblock Globally optimal least-squares {ARMA} model identification is an
  eigenvalue problem,
\newblock \emph{IEEE Control Syst. Lett.}, \textbf{3} (2019), 1062--1067.

\bibitem{volk:1988}
\newblock H.~Volkmer,
\newblock \emph{Multiparameter Eigenvalue Problems and Expansion Theorems,
  Lecture Notes in Mathematics}, vol. 1356,
\newblock Springer-Verlag, Berlin, 1988.

\bibitem{wrtr:2002}
\newblock T.~G. Wright and L.~Trefethen,
\newblock Pseudospectra of rectangular matrices,
\newblock \emph{IMA J. Numer. Anal.}, \textbf{22} (2002), 501--519.
}
\end{thebibliography}
\end{document}